\documentclass[11pt]{article}
\usepackage{stmaryrd}
\usepackage{amsmath}
\usepackage{amsfonts}
\usepackage{amsthm}
\usepackage{txfonts}
\usepackage{hyperref}
\usepackage{graphicx,amssymb,amsfonts,amsmath,amscd}
\usepackage{	textcomp}
\usepackage[all,ps,cmtip]{xy}
\usepackage{amscd}
\usepackage{xspace}

\setlength{\parindent}{.4 in}
\setlength{\textwidth}{6.5 in}
\setlength{\topmargin} {-.3 in}
\setlength{\evensidemargin}{0 in}
\setlength{\oddsidemargin}{0 in}
\setlength{\footskip}{.3 in}
\setlength{\headheight}{.3 in}
\setlength{\textheight}{8.8 in}
\setlength{\parskip}{.09 in}

\usepackage{mathrsfs}


\def\bC{{\bf C}}

\def\bN{{\bf N}}

\def\bP{{\bf P}}
\def\bQ{{\bf Q}}

\def\bZ{{\bf Z}}


\def\C{\bC}

\def\Z{\bZ}
\def\N{\bN}
\def\Q{\bQ}



\def\sO{{\mathscr O}}

\def\sS{{\mathscr S}}

\def\sZ{{\mathscr Z}}


\def\ie{\textit {i.e.}~}
\def\cf{\textit {cf.}~}

\def\apriori{\textit{a priori} }
\def\iff{if and only if }

\def\CH{\mathop{\rm CH}\nolimits} 
\def\ch{\mathop{\rm ch}\nolimits} 
\def\cl{\mathop{\rm cl}\nolimits} 
\def\dual{\mathop{^\vee}\nolimits} 
\def\Gr{\mathop{\rm Gr}\nolimits} 
\def\Hom{\mathop{\rm Hom}\nolimits}
\def\id {\mathop{\rm id}\nolimits} 
\def\im{\mathop{\rm Im}\nolimits} 
\def\Jac{\mathop{\rm Jac}\nolimits} 
\def\mod{\mathop{\rm mod} \nolimits} 
\def\P{\mathop{\bP}\nolimits} 
\def\Pic{\mathop{\rm Pic}\nolimits} 
\def\pr{\mathop{\rm pr}\nolimits} 

\def\pt{\mathop{\rm pt}\nolimits} 
\def\rank{\mathop{\rm rank}\nolimits}

\def\Sym{\mathop{\rm Sym}\nolimits} 






\def\bar{\overline}
\def\inj{\hookrightarrow}
\def\surj{\twoheadrightarrow}

\def\lra{\xrightarrow}

\def\cart{\ar@{}[dr]|\square} 
\def\dual{^\vee} 
\def\isom{\simeq} 

\def\tilde{\widetilde}


\newtheorem{thm}{Theorem}[section]
\newtheorem{prop}[thm]{Proposition}
\newtheorem{lemma}[thm]{Lemma}
\newtheorem{cor}[thm]{Corollary}

\newtheorem{defi}[thm]{Definition}
\newtheorem{rmk}[thm]{Remark}

\newtheorem{def-prop}[thm]{Definition-Proposition}
\newtheorem{prop-def}[thm]{Proposition-Definition}

\begin{document}

\title{Decomposition of small diagonals and Chow rings of hypersurfaces and Calabi-Yau complete intersections}
\author{Lie Fu}
\date{ }

\maketitle

\begin{abstract}
On one hand, for a general Calabi-Yau complete intersection $X$, we establish a decomposition, up to rational equivalence, of the small diagonal in $X\times X\times X$, from which we deduce that any \emph{decomposable} 0-cycle of degree 0 is in fact rationally equivalent to 0, up to torsion. On the other hand, we find a similar decomposition of the smallest diagonal in a higher power of a hypersurface, which provides us an analogous result on the multiplicative structure of its Chow ring.
\end{abstract}

\setcounter{section}{-1}
\section{Introduction}
For a given smooth projective complex algebraic variety $X$, we can
construct very few subvarieties or algebraic cycles of $X\times X$
in an \apriori fashion. Besides the divisors and the exterior
products of two algebraic cycles of each factor, the diagonal
$\Delta_X:=\left\{(x,x)~|~x\in X\right\}\subset X\times X$ is
essentially the only one that we can canonically construct in
general. Despite of its simplicity, the diagonal in fact contains a
lot of geometric information of the original variety. For instance,
its normal bundle is the tangent bundle of $X$; its
self-intersection number is its topological Euler characteristic and
so on. Besides these obvious facts, we would like to remark that the
Bloch-Beilinson-Murre conjecture (\cf \cite{MR923131}
\cite{MR1225267} \cite{MR1265533}), which is considered as one of
the deepest conjectures in the study of algebraic cycles, claims a
conjectural decomposition of the diagonal, up to rational
equivalence, as the sum of certain orthogonal idempotent
correspondences related to the Hodge structures on its Betti
cohomology groups.

The idea of using decomposition of diagonal to study algebraic cycles is initiated by Bloch and Srinivas \cite{MR714776}; we state their main theorem in the following form.
\begin{thm}[Bloch, Srinivas \cite{MR714776}]\label{BS}
Let $X$ be a smooth projective complex algebraic variety of
dimension $n$. Suppose that $\CH_0(X)$ is supported on a closed
algebraic subset $Y$, \ie the natural morphism $\CH_0(Y)\to
\CH_0(X)$ is surjective. Then there exist a positive integer $m\in
\N^*$ and a proper closed algebraic subset $D\subsetneqq X$, such
that in $\CH_n(X\times X)$ we have
\begin{equation} \label{BSdecomp}
 m\cdot \Delta_X=\sZ_1+\sZ_2
\end{equation}
where $\sZ_1$ is supported on $Y\times X$, and $\sZ_2$ is supported on $X\times D$.
\end{thm}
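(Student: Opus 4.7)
The plan is to reduce the decomposition statement to a surjectivity statement about Chow groups of zero-cycles over the function field $K := \C(X)$, and then handle the latter by base change and descent. Set $X_K := X \times_\C \Spec K$, where $K$ is regarded as the function field of the second factor of $X \times X$. The localization exact sequence for Chow groups yields a surjection
\[
\CH_n(X \times X) \twoheadrightarrow \CH_0(X_K)
\]
whose kernel is generated by cycles supported on $X \times D$ for some proper closed $D \subsetneq X$; analogously, there is a surjection $\CH_n(X \times Y) \twoheadrightarrow \CH_0(Y_K)$. Under the first surjection, $\Delta_X$ maps to the class $\delta \in \CH_0(X_K)$ of the canonical $K$-point $\Spec K \hookrightarrow X$ of $X_K$. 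It therefore suffices to exhibit $m \in \N^*$ such that $m\delta$ lies in the image of $\CH_0(Y_K) \to \CH_0(X_K)$: any preimage lifts to a cycle $\sZ_1$ supported on $Y \times X$, and $\sZ_2 := m\Delta_X - \sZ_1$ will automatically be supported on $X \times D$ for a suitable $D$.

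To prove the key claim, I first fix an abstract isomorphism of fields $\sigma: \bar K \xrightarrow{\sim} \C$, which exists by Steinitz's theorem since both sides are algebraically closed of characteristic zero and cardinality $2^{\aleph_0}$. Up to twisting by an automorphism of $\C$ (which preserves the hypothesis, since the conjugate of $Y$ is again a $\C$-subvariety of the conjugate of $X$), this identifies $X_{\bar K}$ with $X$ and $Y_{\bar K}$ with $Y$ at the level of Chow groups, and sends $\delta_{\bar K}$ to the class of a $\C$-point of $X$. By hypothesis this class lies in the image of $\CH_0(Y) \to \CH_0(X)$, so $\delta_{\bar K}$ lies in the image of $\CH_0(Y_{\bar K}) \to \CH_0(X_{\bar K})$. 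Now $\CH_0(X_{\bar K}) = \indlim_L \CH_0(X_L)$ over finite subextensions $L/K$ of $\bar K$, so there exist such an $L$ and a class $\alpha_L \in \CH_0(Y_L)$ with $\iota_* \alpha_L = \delta_L$ in $\CH_0(X_L)$. Pushing forward along the finite morphism $\pi: X_L \to X_K$ of degree $m := [L:K]$, I obtain
\[
m\delta = \pi_* \delta_L = \pi_* \iota_* \alpha_L = \iota_*(\pi_{Y*}\alpha_L),
\]
which is the required lift to the image of $\CH_0(Y_K) \to \CH_0(X_K)$.

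The main obstacle is precisely this descent step from $\bar K$ back to $K$: the trace/norm construction only recovers the class $\delta$ up to the integer $m = [L:K]$, and this is exactly the source of the multiplicative factor appearing in the theorem. A secondary technical point is verifying that the abstract identification $\sigma$ truly transports the support hypothesis, which requires noting that the hypothesis is invariant under automorphisms of $\C$. Once the key claim is in place, the decomposition $m\Delta_X = \sZ_1 + \sZ_2$ follows at once from the kernel description of the restriction surjection.
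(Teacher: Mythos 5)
Your overall architecture (restrict the diagonal to the generic fibre $X_K$, show the class $\delta$ of the canonical $K$-point is supported on $Y_K$ up to a multiple, then use the localization description of the kernel of $\CH_n(X\times X)\to \CH_0(X_K)$ and the norm for a finite extension $L/K$ to produce the factor $m=[L:K]$) is exactly the standard Bloch--Srinivas argument, and those steps are fine. The gap is in the step where you transport the hypothesis across the abstract isomorphism $\sigma:\overline{K}\xrightarrow{\sim}\C$. Note first that $\sigma|_{\C}$ can \emph{never} be an automorphism of $\C$: if it were surjective, then $\sigma(\overline{K})=\sigma(\C)=\C$ would contradict injectivity of $\sigma$ on $\overline{K}\supsetneq\C$. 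So $\tau:=\sigma|_{\C}$ is a non-surjective embedding with image an algebraically closed subfield $F\subsetneq\C$ over which $\C$ has transcendence degree $n$, and $\sigma$ identifies $X_{\overline{K}}$ not with $X$ but with $X_F\otimes_F\C$, where $X_F$ is a copy of $X$ defined over $F$. The transported hypothesis only gives surjectivity of $\CH_0(Y_F)\to\CH_0(X_F)$, i.e.\ it only controls the classes of points of $X_F\otimes_F\C$ that are defined over $F$. But the point $\sigma(\delta_{\overline{K}})$ is the image of the geometric generic point and is precisely \emph{not} defined over $F$ (it is a ``very general'' point of $X_F\otimes_F\C$). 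So the sentence ``By hypothesis this class lies in the image of $\CH_0(Y)\to\CH_0(X)$'' does not apply to it; what you are implicitly using is that surjectivity of $\CH_0(Y_F)\to\CH_0(X_F)$ persists after the base change $F\subset\C$ of algebraically closed fields, and that is not formal — it is the actual content of the theorem.

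The missing ingredient is the spreading/countability argument (or, equivalently, a specialization argument proving invariance of the surjectivity under extension of algebraically closed fields, at least rationally). Concretely: for each $x\in X$ the hypothesis gives a rational equivalence $x\sim z_x$ with $z_x$ supported on $Y$, realized by finitely many curves and rational functions of bounded complexity; the locus of $x$ admitting such data of complexity $\leq N$ is a countable union of Zariski-closed subsets of $X$, and since these cover $X(\C)$ and $\C$ is uncountable, one of them equals $X$. This produces a family of rational equivalences over a variety dominating $X$, hence a rational equivalence $\delta_L\sim\iota_*\alpha_L$ over a finite extension $L$ of $K$ — which is exactly the input your norm computation needs. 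Without this (or an equivalent lemma), the proof does not close. The paper itself does not prove the statement; it cites Bloch--Srinivas, whose proof proceeds along these lines.
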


The above decomposition in the case that $Y$ is a point, or equivalently $\CH_0(X)\isom \Z$ by degree map, is further generalized by Paranjape \cite{MR1283872} and Laterveer \cite{MR1669995} for varieties of small Chow groups in the following form.
\begin{thm}[Paranjape \cite{MR1283872}, Laterveer \cite{MR1669995}]
Let $X$ be a smooth projective $n$-dimensional variety. If the cycle
class map $\cl: \CH_i(X)_\Q\to H^{2n-2i}(X, \Q)$ is injective for
any $0\leq i\leq c-1$. Then there exist a positive integer $m\in
\N^*$, a closed algebraic subset $T$ of dimension $\leq n-c$, and
for each $i\in\{0,1, \cdots,c-1\}$, a pair of closed algebraic
subsets $V_i, W_i$ with $\dim V_i=i$ and $\dim W_i=n-i$, such that
in $\CH_n(X\times X)$, we have
\begin{equation} \label{PLdecomp}
 m\cdot \Delta_X=\sZ_0+\sZ_1+\cdots+\sZ_{c-1}+\sZ'
\end{equation}
where $\sZ_i$ is supported on $V_i\times W_i$ for any $0\leq i<c$, and $\sZ'$ is supported on $X\times T$.
\end{thm}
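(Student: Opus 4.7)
I would prove the theorem by induction on $c$, peeling off one ``K\"unneth component'' of the diagonal at each inductive step via the Bloch--Srinivas spreading technique. The base case $c=0$ is trivial: take $m=1$, no $\sZ_i$'s, $T=X$, and $\sZ'=\Delta_X$. For the inductive step, assume the statement at level $c-1$: after clearing denominators,
$$
m\Delta_X \;=\; \sZ_0 + \sZ_1 + \cdots + \sZ_{c-2} + \sZ''
$$
in $\CH_n(X\times X)$, where $\sZ''$ is supported on $X\times T''$ for some (WLOG irreducible) closed $T''\subset X$ of dimension $\leq n-c+1$. Choose a resolution of singularities $\tau\colon \tilde T \to T''$ and lift $\sZ''$ (up to torsion) to a cycle $\widetilde{\sZ''}\in \CH_n(X\times \tilde T)_\Q$ such that $(\id\times(\iota_{T''}\circ\tau))_*\widetilde{\sZ''}$ agrees with $\sZ''$ modulo cycles supported on $X\times \Sing(T'')$ (which already has dimension $\leq n-c$, hence acceptable).

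The heart of the argument is to split $\widetilde{\sZ''} = \sZ_{c-1} + \sZ'''$ with $\sZ_{c-1}$ of the prescribed K\"unneth type and $\sZ'''$ supported on $X\times\tilde D$ for some proper closed $\tilde D\subsetneq \tilde T$. I would extract $\sZ_{c-1}$ cohomologically: the cycle class $[\widetilde{\sZ''}]\in H^{2(n-c+1)}(X\times \tilde T, \Q)$ has a K\"unneth component in $H^{2n-2c+2}(X,\Q)\otimes H^{0}(\tilde T,\Q)$ of the form $\gamma\otimes 1$, where $\gamma\in H^{2n-2c+2}(X,\Q)$ is the cycle class of the geometric generic fibre $\widetilde{\sZ''}_{\bar\eta}$. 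Since $\gamma$ is algebraic (being the class of an algebraic cycle on $X$ after any embedding $\overline{k(\tilde T)}\hookrightarrow\C$), the injectivity hypothesis on $\cl\colon \CH_{c-1}(X)_\Q \to H^{2n-2c+2}(X,\Q)$ produces a cycle $\beta\in \CH_{c-1}(X)_\Q$, unique up to torsion, with $\cl(\beta) = \gamma$. Set $\sZ_{c-1} := \beta\times [\tilde T] \in \CH_n(X\times\tilde T)_\Q$; after pushforward to $X\times X$, it is supported on $|\beta|\times T''$, giving $V_{c-1} = |\beta|$ of dimension $c-1$ and $W_{c-1} = T''$ of dimension $n-c+1$, as required.

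Finally, $\sZ''' := \widetilde{\sZ''} - \sZ_{c-1}$ has, by construction, trivial K\"unneth component in $H^{2n-2c+2}(X)\otimes H^0(\tilde T)$; equivalently, its generic fibre $\sZ'''_{\bar\eta_{\tilde T}}\in \CH_{c-1}(X_{k(\tilde T)})_\Q$ has vanishing cycle class. To conclude I would apply the Bloch--Srinivas spreading argument: combining the localization exact sequence
$$
\bigoplus_{\tilde D\subsetneq \tilde T}\CH_*(X\times\tilde D)_\Q \;\to\; \CH_*(X\times\tilde T)_\Q \;\to\; \CH_*(X_{k(\tilde T)})_\Q \;\to\; 0
$$
with a countable-union / Hilbert-scheme argument, cohomological triviality on the geometric generic fibre, together with the injectivity hypothesis applied fibrewise over closed points of $\tilde T$, upgrades to rational triviality of $\sZ'''_{\bar\eta}$ in $\CH_{c-1}(X_{k(\tilde T)})_\Q$. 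Hence $\sZ'''$ is rationally equivalent to a cycle supported on $X\times\tilde D$ for some proper closed $\tilde D\subsetneq \tilde T$; pushing forward by $\tau$ produces the refined decomposition with new base $T' := \tau(\tilde D)\cup \Sing(T'')$ of dimension $\leq n-c$, closing the induction. The main obstacle is precisely this last step: the hypothesis supplies cycle-class injectivity only over $\C$, while the spreading input lives over $k(\tilde T)$, so bridging the gap---passing from rational triviality on closed geometric fibres to rational triviality on the geometric generic fibre---requires the classical, but delicate, Bloch--Srinivas countable-union argument in the Hilbert scheme.
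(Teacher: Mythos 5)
Your proposal is essentially correct, and it reproduces the standard Paranjape--Laterveer argument; note that the paper itself only quotes this theorem from \cite{MR1283872} and \cite{MR1669995} without proof, so there is no in-paper argument to compare against. The inductive scheme (peel off the $H^{2n-2i}(X)\otimes H^0$ K\"unneth component of the cycle spread over the current support $T''$, use injectivity of $\cl$ on $\CH_{c-1}(X)_\Q$ at closed points, then run the Bloch--Srinivas countable-union/Baire argument to transfer rational triviality to the geometric generic fibre and invoke localization) is exactly how the cited results are proved, and you correctly flag the generic-fibre-versus-closed-fibre passage as the delicate step. Two small housekeeping points you should make explicit: the discrepancy between $\sZ''$ and the pushforward of its strict transform is supported over the non-isomorphism locus of the resolution $\tau$ (a proper closed subset of $T''$, hence of dimension $\leq n-c$), not merely over $\Sing(T'')$; and since $|\beta|$ and $T''$ may have dimension strictly less than $c-1$ and $n-c+1$ respectively, one should enlarge them to closed subsets $V_{c-1}$, $W_{c-1}$ of the exact dimensions demanded by the statement. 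Finally, the integral statement with a single $m\in\N^*$ is obtained at the end by clearing all denominators accumulated in the $\Q$-coefficient induction, as you indicate.
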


For applications of such decompositions, the point is that we
consider (\ref{BSdecomp}) and (\ref{PLdecomp}) as equalities of
correspondences from $X$ to itself, which yield decompositions of
the identity correspondence. This point of view allows us to deduce
from (\ref{BSdecomp}) and (\ref{PLdecomp}) many interesting results like generalizations of Mumford's theorem (\cf \cite{MR0249428} \cite{MR714776} \cite{MR1997577}).

Most of this paper is devoted to the study of the class of the \emph{small diagonal}
\begin{equation}
 \delta_X:=\{(x,x,x)\in X^3~~|~~x\in X\}
\end{equation}
in $\CH_n(X^3)_\Q$, where $X$ is an $n$-dimensional Calabi-Yau variety. The interest of the study is motivated by the obvious fact that while the diagonal seen as a self-correspondence of $X$ controls $\CH^*(X)_\Q$ as an additive object, the small diagonal seen as a correspondence between $X\times X$ and $X$ controls the multiplicative structure of $\CH^*(X)_\Q$.

The first result in this direction is due to Beauville and Voisin \cite{MR2047674}, who find a
decomposition of the \emph{small} diagonal $\delta_S:=\{(x,x,x)~|~
x\in S\}$ in $\CH_2(S\times S\times S)$ for $S$ an algebraic K3 surface.
\begin{thm}[Beauville, Voisin \cite{MR2047674}] \label{BV}
Let $S$ be a projective K3 surface, and $c_S\in \CH_0(S)$ be the
well-defined\footnote{The fact that $c_S$ is well-defined relies on
the result of Bogomolov-Mumford (\cf the appendix of
\cite{0557.14015}) about the existence of rational curves in any
ample linear system, \cf \cite{MR2047674}.} 0-dimensional cycle of
degree 1 represented by any point lying on any rational curve of
$S$. Then in $\CH_2(S\times S\times S)$, we have
\begin{equation}\label{BVdecomp}
\delta_S=\Delta_{12}+\Delta_{13}+\Delta_{23}-S\times c_S\times c_S-c_S\times S\times c_S-c_S\times c_S\times S
\end{equation}
where $\Delta_{12}$ is represented by $\left\{(x,x,c_S)~|~x\in S\right\}$, and $\Delta_{13}$, $\Delta_{23}$ are defined similarly.
\end{thm}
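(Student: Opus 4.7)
The plan is to establish (\ref{BVdecomp}) in two stages: a direct cohomological verification, then an upgrade to rational equivalence via the Beauville--Voisin key lemma on intersections of divisors in $\CH_0(S)$.

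The cohomological step is routine: by K\"unneth and $H^{\mathrm{odd}}(S) = 0$, the identity in $H^8(S^3, \Q)$ reduces to a finite collection of numerical equalities, each verified by pairing against a basis of $H^*(S)^{\otimes 3}$ using $\int_S c_S = 1$. Each term on the right-hand side contributes a factorized pairing: $\Delta_{ij}$ pairs two of the three factors via intersection on $S$ and evaluates the third at $c_S$, while each $\{c_S\}\times\{c_S\}\times S$-type cycle evaluates two factors at $c_S$ and integrates the third.

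The key Beauville--Voisin lemma asserts that for any divisors $D_1, D_2 \in \CH^1(S)$,
\begin{equation*}
D_1 \cdot D_2 = \deg(D_1 \cdot D_2) \cdot c_S \quad \text{in } \CH_0(S),
\end{equation*}
and moreover $c_2(T_S) = 24\, c_S$. By $\Q$-linearity the divisor case reduces to $D_1 = D_2 = L$ very ample; invoking Bogomolov--Mumford, choose a (possibly nodal) rational curve $R \in |L|$, so that $L^2 = L \cdot R$ is a 0-cycle entirely supported on $R$. Since every point of $R$ represents $c_S$ in $\CH_0(S)$ (because the components of $R$ are normalised by $\P^1$), the equality follows. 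The Chern class identity is obtained similarly, using $\chitop(S) = 24$ combined with the fact that $c_2(T_S)$ can be represented by 0-cycles lying on rational curves.

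The third and most delicate step upgrades these ingredients to the equality (\ref{BVdecomp}) in $\CH_2(S^3)_\Q$. Set $Z := \delta_S - \Delta_{12} - \Delta_{13} - \Delta_{23} + S\times c_S\times c_S + c_S\times S\times c_S + c_S\times c_S\times S$, homologically trivial by the first step. Viewed as a correspondence $S\times S \corr S$, the action of $Z$ vanishes on every decomposable class: for divisors $\alpha, \beta$,
\begin{equation*}
Z_*(\alpha \otimes \beta) = \alpha \cdot \beta - \deg(\alpha \cdot \beta)\cdot c_S = 0,
\end{equation*}
by the intersection lemma, and the remaining bidegrees (including one involving $c_2(T_S)$) are handled similarly. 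The main obstacle is that vanishing of the correspondence action does not directly imply $Z = 0$ as a cycle. This gap is bridged by observing that $Z$ is supported on the union of the pairwise big diagonals $\Delta^S_{ij} \subset S^3$; inside each such diagonal (identified canonically with $S\times S$) the components of $Z$ lie in the small subgroup of $\CH_2$ generated by $\Delta_S$ and exterior products with $c_S$, on which the correspondence-to-cycle map is faithful, forcing $Z = 0$.
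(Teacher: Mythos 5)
The theorem you are proving is quoted in this paper from \cite{MR2047674} without proof, so I measure your attempt against the original Beauville--Voisin argument and against the analogous proofs the paper does give (Proposition \ref{basicequality}, Theorem \ref{main1}). Your first two steps are sound: the cohomological identity is a routine K\"unneth computation, and your proof that $D_1\cdot D_2=\deg(D_1\cdot D_2)\,c_S$ is essentially the argument of \cite{MR2047674} (recorded here as Corollary \ref{BVCor}, and indeed used there as an input to the decomposition). One secondary worry: the identity $c_2(T_S)=24\,c_S$ is normally \emph{deduced from} the small-diagonal decomposition, so if your verification of $Z_*=0$ really needs it you have a circularity to resolve there as well.

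The genuine gap is the final step. Homological triviality of $Z$ together with vanishing of its action as a correspondence $S\times S\corr S$ can never force $Z=0$ in $\CH_2(S^3)$: the map from cycles to correspondences is far from injective, and the entire content of the theorem is a nontrivial rational equivalence in the huge group $\CH_2(S^3)$. Your proposed bridge --- that $Z$ is supported on the big diagonals and that the ``correspondence-to-cycle map is faithful'' on the subgroup generated by $\delta_S$, the $\Delta_{ij}$ and the products with $c_S$ --- is circular. Concretely, let $V\cong\Q^7$ be the free vector space on the seven cycles and $G\subset\CH_2(S^3)_{\Q}$ its image. The best one could establish by testing the action on enough classes is that the kernel of $V\to\Hom\left(\CH(S^2)_{\Q},\CH(S)_{\Q}\right)$ is at most the line spanned by $(1,-1,-1,-1,1,1,1)$; injectivity of $G\to\Hom(\cdot,\cdot)$ is then \emph{equivalent} to that line already lying in the kernel of $V\to\CH_2(S^3)_{\Q}$, which is precisely the statement you are trying to prove. (Moreover, you have only verified $Z_*=0$ on decomposable classes, whereas $\CH^2(S\times S)_{\Q}$ contains much more.) What is missing is the geometric construction that actually produces the rational equivalence: in \cite{MR2047674} this is the study of the family of curves $C_t\in|L|$ and of the cycle swept out by $C_t\times C_t\times C_t$, followed by a localization argument; in the present paper's generalizations it is the locus $V$ of collinear triples, the vector bundle $F$ with $c_{n-r+1}(F|_V)=\Gamma_o$, the excess intersection computation, and the localization exact sequence $\CH_n(X)\to\CH_n(X^3)\to\CH_n(X^3\backslash\delta_X)\to 0$. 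Without an ingredient of this kind the argument cannot close.
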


As is mentioned above, we regard (\ref{BVdecomp}) as an
equality of correspondences from $S\times S$ to $S$. Applying this
to a product of two divisors $D_1\times D_2$, one can recover the
following corollary, which is in fact a fundamental observation in
\cite{MR2047674} for the proof of the above theorem.

\begin{cor}[Beauville, Voisin \cite{MR2047674}]\label{BVCor}
Let $S$ be a projective K3 surface. Then the intersection product of any two divisors is always proportional to the class $c_S$ in $\CH_0(S)$, \ie
$$\im\left(\Pic(S)\times \Pic(S)\lra{\bullet}\CH_0(S)\right)=\Z\cdot c_S.$$
\end{cor}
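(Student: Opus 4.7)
My plan is to read the identity $(\ref{BVdecomp})$ as an equality of correspondences from $S\times S$ to $S$, realized through the projections $p_{12}: S^3\to S\times S$ and $p_3: S^3\to S$, and then to evaluate both sides on a product class $D_1\times D_2\in \CH^2(S\times S)$ for arbitrary $D_1,D_2\in\Pic(S)$.

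First I compute the action of the small diagonal. Since $\delta_S$ maps isomorphically to the diagonal under $p_{12}$ and to $S$ under $p_3$, the pushforward
$$(p_3)_*\bigl(\delta_S\cdot p_{12}^*(D_1\times D_2)\bigr)$$
equals the restriction of $D_1\times D_2$ to the diagonal, namely $D_1\cdot D_2\in\CH_0(S)$.

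Next I evaluate each of the six summands on the right. Writing $\Delta_{12}=\Delta_{S,12}\cdot p_3^*(c_S)$, where $\Delta_{S,12}\subset S^3$ denotes the partial diagonal $\{(x,x,y)\}$, the projection formula gives $\Delta_{12,*}(D_1\times D_2)=\deg(D_1\cdot D_2)\cdot c_S$: the partial diagonal collapses $D_1\times D_2$ into the 0-cycle $D_1\cdot D_2$ along the first two factors, and the $c_S$-weighting in the third factor pushes it forward to $\deg(D_1\cdot D_2)\cdot c_S$. For the analogous terms $\Delta_{13}$ and $\Delta_{23}$, a parallel calculation produces an intersection of the form $c_S\cdot D_i\in\CH^3(S)$, which vanishes for dimension reasons. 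For each of the three terms $S\times c_S\times c_S$, $c_S\times S\times c_S$, $c_S\times c_S\times S$, the pullback of $D_1\times D_2$ via $p_{12}$ and the intersection with the given term again produces at least one factor $c_S\cdot D_i\in \CH^3(S)=0$, so all three vanish.

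Assembling these, the identity $(\ref{BVdecomp})$ applied to $D_1\times D_2$ collapses to
$$D_1\cdot D_2=\deg(D_1\cdot D_2)\cdot c_S\in\CH_0(S),$$
which establishes that the image of $\Pic(S)\times\Pic(S)\to \CH_0(S)$ is contained in $\Z\cdot c_S$. I expect the main obstacle to be purely organizational: one has to keep careful track of the pullbacks and pushforwards under the three projections out of $S^3$, but once the six terms are correctly expanded, the dimension vanishing $\CH^3(S)=0$ disposes of five of them at once, leaving exactly the desired identity.
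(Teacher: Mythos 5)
Your proof is correct and is exactly the argument the paper has in mind: the paper merely says the corollary is recovered by "applying (\ref{BVdecomp}) to a product of two divisors" viewed as a correspondence from $S\times S$ to $S$, and your term-by-term evaluation (the small diagonal giving $D_1\cdot D_2$, the term $\Delta_{12}$ giving $\deg(D_1\cdot D_2)\cdot c_S$, and the remaining five terms dying because they involve a factor $c_S\cdot D_i\in\CH^3(S)=0$) supplies those details accurately. Note only that what you establish is the inclusion $\im\subseteq\Z\cdot c_S$, i.e.\ proportionality, which is precisely the content asserted in the corollary's first sentence and all that this route yields.
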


As is pointed out in \cite{MR2047674}, this corollary is somehow
surprising because $\CH_0(S)$ is \apriori very huge (`of infinite
dimension' in the sense of Mumford, \cf \cite{MR0249428}).\\

The next result in this direction, which is also the starting point of this paper, is the following partial generalization of Theorem \ref{BV} due to Voisin:

%
%

\begin{thm}[Voisin \cite{VoiFamilyK3}] \label{V}
Let $X\subset\P^{n+1}$ be a general smooth hypersurface of Calabi-Yau type, that is, the degree of $X$ is $n+2$. Let $h:=c_1\big(\sO_X(1)\big)\in \CH^1(X)$ be the hyperplane section class, $h_i:=\pr_i^*(h)\in \CH^1(X^3)$ for $i=1,2,3$, and $c_X:=\frac{h^n}{n+2}\in \CH_0(X)_\Q$ be a $\Q$-0-cycle of degree 1.  Then we have a decomposition of the small diagonal in $\CH_n(X^3)_\Q$
\begin{equation}\label{Vdecomp}
 \delta_X=\frac{1}{(n+2)!}\Gamma+\Delta_{12}+\Delta_{13}+\Delta_{23}+P(h_1, h_2, h_3),
\end{equation}
where $\Delta_{12}=\Delta_X \times c_X$, and $\Delta_{13}$, $\Delta_{23}$ are defined similarly; $P$ is a homogeneous polynomial of degree $2n$; and  $\Gamma:=\bigcup_{t\in F(X)}\P^1_t\times\P^1_t\times\P^1_t\subset X^3$, where $F(X)$ is the variety of lines of $X$, and $\P^1_t$ is the line corresponding to $t\in F(X)$.
\end{thm}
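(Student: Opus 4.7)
The plan is to combine a universal-family/spread argument with an explicit cohomological computation tailored to Calabi-Yau hypersurfaces. Let $B\subset\P\big(H^0(\P^{n+1},\sO(n+2))\big)$ be the Zariski open parameterizing smooth hypersurfaces, let $\pi:\sX\to B$ be the universal family, and form the fibered triple product $\sX^{[3]}:=\sX\times_B\sX\times_B\sX$. Every cycle in (\ref{Vdecomp}) has a natural relative lift to $\CH_*(\sX^{[3]})_\Q$: the relative small diagonal $\delta_{\sX/B}$, the line cycle $\Gamma_{\sX/B}$ built from the relative universal line $\sL\to\sF\to B$ and its threefold self-product over $\sF$, the relative diagonals-times-$c_X$, and the relative hyperplane pullbacks $h_i$. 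Let $\sD$ denote the difference of the two sides of (\ref{Vdecomp}) in this relative setting; the theorem reduces to showing $\sD|_{X_b^3}=0$ in $\CH_n(X_b^3)_\Q$ for general $b\in B$.

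Next I verify the cohomological identity $[\sD_b]=0$ in $H^{4n}(X_b^3,\Q)$ on every smooth fiber. By the Lefschetz hyperplane theorem, $H^*(X_b,\Q)=\Q[h]/(h^{n+1})\oplus H^n_{\mathrm{prim}}(X_b)$, the primitive part concentrated in middle degree. The K\"unneth components of $[\delta_{X_b}]$ represent triple cup-products of classes whose degrees sum to $2n$. Splitting each factor into polynomial-in-$h$ versus primitive pieces yields three cases: the ``all polynomial in $h$'' components are absorbed into $P(h_1,h_2,h_3)$; the ``two primitive plus one constant'' components match the primitive part of $[\Delta_{12}]+[\Delta_{13}]+[\Delta_{23}]$ (the non-primitive part of the latter being absorbed into $P$); all remaining mixed contributions vanish via the primitivity relation $h\cdot\alpha=0$ for $\alpha\in H^n_{\mathrm{prim}}$ or on degree grounds. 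The class $[\Gamma]$ itself lies entirely in the polynomial-in-$h_i$ part, since for $n\geq 3$ a primitive $n$-class restricts to zero on a $\P^1$-fiber of $\sL\to\sF$. The Calabi-Yau condition $\deg X=n+2$ enters through the adjunction identity $\det N_{L/X}=\sO_L(2)$, which fixes the push-pull intersection numbers on $\sF$ and produces the precise coefficient $\tfrac{1}{(n+2)!}$ so that $\tfrac{1}{(n+2)!}[\Gamma]$ together with $P(h_1,h_2,h_3)$ reproduces the polynomial part of $[\delta_{X_b}]$.

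Third, I promote the fiberwise cohomological identity to a rational equivalence via Voisin's spread principle (a relative Bloch-Srinivas-type mechanism used in \cite{VoiFamilyK3}). Given $[\sD_b]=0$ for every general $b$, the principle yields (after possibly shrinking $B$) a decomposition $\sD=\sD_1+\sD_2$ in $\CH_n(\sX^{[3]})_\Q$, where $\sD_1$ is pulled back from $B$ and $\sD_2$ is supported on $\pi_{[3]}^{-1}(E)$ for some proper closed $E\subsetneq B$, with $\pi_{[3]}:\sX^{[3]}\to B$ the structure map. Restriction to $X_b^3$ for $b\in B\setminus E$ kills $\sD_2$, and $\sD_1|_{X_b^3}=0$ since it factors through a class supported at the point $\{b\}$; hence $\sD_b=0$ in $\CH_n(X_b^3)_\Q$, as desired.

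\textbf{Main obstacle.} The hardest ingredient is the intersection-theoretic identification of the coefficient $\tfrac{1}{(n+2)!}$. Tracking this factor requires careful push-pull computations on $\sL\to\sF$ together with the splitting type of $N_{L/X}$ (balanced for a general line on a general Calabi-Yau hypersurface); the $(n+2)!$ arises combinatorially from the symmetric-group action on the $n+2$ intersection points of $X$ with a generic pencil containing $L$. A secondary technical point is ensuring the spread principle's output is sharp enough to rule out Chow-theoretic defects outside our allowed cycle list (line cubes, diagonals-times-$c_X$, polynomials in $h_i$); this is precisely why the statement is made for the \emph{general}, rather than the \emph{very general}, Calabi-Yau hypersurface, and why the allowed cycles must all lift relatively to $\sX^{[3]}$.
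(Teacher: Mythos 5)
Your Steps 1 and 2 (lifting everything to the universal family and checking the identity fiberwise in cohomology) are reasonable as far as they go, but Step 3 is a genuine gap, and it is where the entire difficulty of the theorem lives. There is no ``spread principle'' that converts the fiberwise cohomological vanishing $[\sD_b]=0$ into the fiberwise Chow-theoretic vanishing $\sD_b=0$ in $\CH_n(X_b^3)_\Q$. The Bloch--Srinivas/Voisin spreading arguments run in the opposite direction: one assumes a Chow-theoretic hypothesis on (very) general fibers and deduces a decomposition of the relative cycle with one piece supported over a proper closed subset of $B$ and another supported on proper subvarieties of the fibers. Starting only from $[\sD_b]=0$ for all $b$, the most one can conclude is that $[\sD]$ dies in the first graded piece of the Leray filtration on $H^{4n}(\sX^{[3]},\Q)$; this says nothing about the class of $\sD_b$ in $\CH_n(X_b^3)_\Q$. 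Indeed, if your Step 3 were valid as stated, it would show that any relative cycle that is fiberwise homologically trivial is fiberwise rationally trivial for general $b$; applied to the universal family of pairs of points on these hypersurfaces it would give $x\sim_{\mathrm{rat}}y$ for all $x,y$ on a general fiber, contradicting Mumford's infinite-dimensionality of $\CH_0$ (which applies here precisely because $H^{n,0}(X)\neq 0$). The theorem is exactly the assertion that for this particular cycle the cohomological decomposition lifts to rational equivalence, and that cannot be obtained for free from a spreading argument.

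The actual proof (Voisin's, reproduced and generalized in Section 1 of this paper) stays inside the Chow group throughout: one intersects the locus $W$ of collinear triples in $\P^{\times 3}$ with $X^3\setminus\delta_X$, identifies $\Gamma_o$ as the top Chern class of an explicit vector bundle on the transversal component $V$, computes the excess contributions along the big diagonals, and pushes the resulting relation down from $\CH^*(\P^{\times 3})$, which is where the polynomial $P(h_1,h_2,h_3)$ comes from; the localization exact sequence then yields the decomposition of $N\cdot\delta_X$ in $\CH_n(X^3)$, and the coefficient $N=(n+2)!$ is pinned down by projecting to $X^2$ and invoking the non-existence of a Bloch--Srinivas decomposition of $\Delta_X$ when $H^{n,0}(X)\neq 0$ (Lemma \ref{coefflemma}). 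Your cohomological bookkeeping in Step 2 amounts to the consistency check that this last lemma performs, but it cannot substitute for the intersection-theoretic construction of the identity at the level of cycles.
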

Applying (\ref{Vdecomp}) as an equality of correspondences, she
deduces the following
\begin{cor}[Voisin \cite{VoiFamilyK3}] \label{VCor}
In the same situation as the above theorem, the intersection product
of any two cycles of complementary and strictly positive
codimensions is always proportional to $c_X$ in $\CH_0(X)_\Q$, \ie
for any $i,j\in \N^*$ with $i+j=n$, we have
$$\im\left(\CH^i(X)_\Q\times \CH^j(X)_\Q\lra{\bullet}\CH_0(X)_\Q\right)= \Q\cdot
c_X.$$ In particular, for any $ i=1,\cdots,m$, let $Z_i, Z'_i$ be
algebraic cycles of strictly positive codimension with $\dim
Z_i+\dim Z'_i=n$, then any equality on the cohomology level
$\sum_{i=1}^m\lambda_i[Z_i]\cup[Z'_i]=0$ in $H_0(X,\Q)$ is in fact
an equality modulo rational equivalence: $\sum_{i=1}^m\lambda_i
Z_i\bullet Z'_i=0$ in $\CH_0(X)_{\Q}$.
\end{cor}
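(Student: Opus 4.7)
The strategy is to view the decomposition (\ref{Vdecomp}) as an equality of correspondences from $X \times X$ to $X$, apply both sides to the exterior product $\alpha \times \beta$, and verify that each of the five contributions on the right-hand side lies in the line $\Q \cdot c_X$. Since $(\delta_X)_*(\alpha \times \beta) = \alpha \cdot \beta$ in $\CH_0(X)_\Q$, this will immediately yield the first assertion.

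Three of the terms are disposed of by direct calculation. The projection formula yields $(\Delta_{12})_*(\alpha \times \beta) = \deg(\alpha \cdot \beta) \cdot c_X$. For $\Delta_{13}$ and $\Delta_{23}$, the analogous push-forward contains a factor $c_X \cdot \beta$ (resp.\ $c_X \cdot \alpha$) lying in $\CH^{n+j}(X)_\Q$ (resp.\ $\CH^{n+i}(X)_\Q$), which vanishes since $c_X$ is a $0$-cycle while $\alpha$, $\beta$ have strictly positive codimension --- this is exactly where the hypothesis $i, j \in \N^*$ enters. For $P(h_1, h_2, h_3) = \sum_{a+b+c=2n} P_{a,b,c}\, h_1^a h_2^b h_3^c$, only the monomial with $(a, b, c) = (j, i, n)$ survives the three degree constraints for non-vanishing push-forward, and it contributes a rational multiple of $h^n = (n+2)\, c_X$.

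The main step is the $\Gamma$ term. Let $\rE \to F(X)$ denote the universal line (a $\P^1$-bundle) and $e: \rE \to X$ the evaluation map; set $\rT := \rE \times_{F(X)} \rE \times_{F(X)} \rE$, with projections $p_i: \rT \to \rE$ and composites $q_i := e \circ p_i$. The map $q := (q_1, q_2, q_3): \rT \to X^3$ has image $\Gamma$, so the projection formula gives
\[
 (\Gamma)_*(\alpha \times \beta) \;=\; \tfrac{1}{\deg q} \, q_{3*}\bigl(q_1^* \alpha \cdot q_2^* \beta\bigr).
\]
Since $p_1$ and $p_2$ both factor through $p_{12}: \rT \to \rE \times_{F(X)} \rE$, the integrand equals $p_{12}^*\bigl(\pr_1^* e^* \alpha \cdot \pr_2^* e^* \beta\bigr)$. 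But $\dim(\rE \times_{F(X)} \rE) = \dim F(X) + 2 = n - 1$, while $\pr_1^* e^* \alpha \cdot \pr_2^* e^* \beta$ has codimension $i + j = n > n - 1$; the class therefore vanishes, and so does $(\Gamma)_*(\alpha \times \beta)$.

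Summing the five contributions shows $\alpha \cdot \beta \in \Q \cdot c_X$, proving the first part. For the second, write each $Z_i \bullet Z_i' = \mu_i \cdot c_X$; taking degrees in the hypothesized cohomology identity $\sum_i \lambda_i [Z_i] \cup [Z_i'] = 0$ yields $\sum_i \lambda_i \mu_i = 0$ (using $\deg c_X = 1$), hence $\sum_i \lambda_i Z_i \bullet Z_i' = 0$ in $\CH_0(X)_\Q$. The crux of the argument --- and its main difficulty --- is the dimensional collapse for the $\Gamma$ term: \emph{a priori} one fears a nontrivial contribution supported on the locus of $X$ swept out by lines, but the relevant intersection is forced onto the $(n-1)$-dimensional fiber product $\rE \times_{F(X)} \rE$, where any codimension-$n$ class automatically vanishes.
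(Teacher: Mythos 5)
Your proposal is correct and follows the same strategy the paper uses for its analogous statements (Corollary \ref{corweak}, Theorem \ref{main3cor}): read the decomposition of $\delta_X$ as an identity of correspondences from $X\times X$ to $X$ and evaluate term by term on $\alpha\times\beta$, with the big diagonals and the polynomial term contributing only multiples of $c_X$. Your treatment of the $\Gamma$ term via the fiber product $\rE\times_{F(X)}\rE$ of dimension $n-1$ is just a more explicit version of the paper's one-line dimension argument (a $0$-cycle class represented on a locus swept out by the lines $\P^1_t$ must vanish), so the two proofs coincide in substance.
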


The main results of this paper are further generalizations of
Voisin's theorem and its corollary in two different
directions.\\

The first direction of generalization is about smooth Calabi-Yau
complete intersections in projective spaces:

\begin{thm}[=Theorem \ref{main1}+Theorem \ref{main2}+Theorem \ref{main1CI}+Theorem \ref{main2CI}]\label{thmA}
  Let $E$ be a rank $r$ vector bundle on $\P^{n+r}$ satisfying the following positivity condition:\\
  \noindent $(*)$ The evaluation map $H^0(\P^{n+r},E)\to \oplus_{i=1}^3E_{y_i}$ is surjective for any three collinear points $y_1, y_2, y_3\in
  \P^{n+r}.$\\
  Let $X$ be the zero locus of a general section of $E$. Suppose that the canonical bundle of $X$ is
  trivial, \ie $\det(E)\isom \sO_{\P^{n+r}}(n+r+1)$. Then we have:
\begin{enumerate}
  \item[(i)] There are symmetric homogeneous polynomials with $\Z$-coefficients\footnote{See
Theorem \ref{main1} for more about their coefficients.} $Q, P$, such
that in $\CH_n(X^3)$:
  \begin{equation*}
a_0\deg(X)\cdot\delta_X=
\Gamma+j_{12*}(\sZ)+j_{13*}(\sZ)+j_{23*}(\sZ)+P(h_1,h_2,h_3)
  \end{equation*}
  \begin{equation*}
    \text{with}~~~ \sZ:=Q(h_1, h_2)~~~\text{in}~ \CH_n(X\times X),
  \end{equation*}
  where $\Gamma$ is defined as in Theorem \ref{V}; $h_i\in \CH^1(X^3)$ or
$\CH^1(X^2)$ is the pull-back of $h=c_1(\sO_X(1))$ by the $i^{th}$
projection; and the inclusions of big diagonals are given by:
\begin{eqnarray*}
  & X^2 &\inj X^3\\
  j_{12}: &(x,x')&\mapsto  (x,x,x')\\
  j_{13}: &(x,x')&\mapsto  (x,x',x)\\
  j_{23}: &(x,x')&\mapsto  (x',x,x).\\
\end{eqnarray*}
  \item[(ii)] If the coefficients of $Q$ determined by (\ref{Q}) in Proposition \ref{predecompProp} satisfy $a_0\neq 0$ and $a_1\neq a_0$, then
the intersection product of two cycles of strictly positive
complementary codimension is as simple as possible: for any $k,l\in
\N^*$ with $k+l=n$,
$$\im\left(\bullet: \CH^k(X)_{\Q}\times \CH^l(X)_{\Q}\to \CH_0(X)_{\Q}\right)=\Q\cdot h^n,$$
where $h=c_1(\sO_X(1))\in \CH^1(X)$.
\item[(iii)] The conditions $a_0\neq 0$ and $a_1\neq a_0$ are always satisfied in the splitting case, that is, $E=\oplus_{i=1}^r\sO_{\P}(d_i)$  with $d_i\geq 2$,
hence the conclusions of (i) and (ii) stated above hold for for any general Calabi-Yau complete intersections\footnote{See Theorem \ref{main1CI} and Theorem \ref{main2CI} for the precise statements.}.
\end{enumerate}
\end{thm}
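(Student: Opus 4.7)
The plan is to adapt Voisin's strategy from the proof of Theorem~\ref{V} to the vector-bundle setting, working over the parameter space $B \subset \P(H^0(\P^{n+r}, E))$ that parametrizes smooth zero loci. Let $\pi: \sX \to B$ be the universal family, and consider the relative triple product $\sX^{\times_B 3}$ with three distinguished subschemes: the universal small diagonal $\delta_{\sX/B}$, the three universal big diagonals, and the universal line-locus $\sG$ whose fiber over $s \in B$ equals the cycle $\Gamma$ of Theorem~\ref{V}. The positivity hypothesis $(*)$ has a direct geometric meaning: for any three collinear points $y_1, y_2, y_3 \in \P^{n+r}$ the vanishing locus $B_{y_1 y_2 y_3} \subset B$ has expected codimension $3r$, so the universal incidence over the variety of collinear triples is a projective bundle, ready for a Bloch-Srinivas type spreading argument.

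For (i), the idea is to establish a decomposition that is generically almost tautological and then identify the residual classes via excess intersection. On the open complement $U \subset \sX^{\times_B 3}$ of $\sG$ and the three big diagonals, arguing as in~\cite{VoiFamilyK3}, any codimension-$n$ cycle that spreads over all of $B$ is the restriction of a class from $(\P^{n+r})^3 \times B$, hence by the projective-bundle formula is a polynomial in $h_1, h_2, h_3$. Thus some integer multiple $a_0 \deg(X)$ of $\delta_X$, minus $\Gamma$ and a universal polynomial $P(h_1, h_2, h_3)$, is supported on the three big diagonals; here $a_0 \deg(X)$ is the excess-intersection contribution of $\sG \cap \delta_{\sX/B}$, determined by the Chern classes of $E$ and hypothesis $(*)$. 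Pulling back to each big diagonal via $j_{ij}$ and computing the residual then yields a symmetric polynomial $Q(h_1, h_2) \in \Z[h_1, h_2]$ whose coefficients arise from a recursive Chern class computation leading to formula~(\ref{Q}).

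For (ii), I view the decomposition in (i) as an identity of correspondences $X \times X \corr X$ by projecting onto the third factor, and apply it to $\alpha \times \beta$ with $\alpha \in \CH^k(X)_\Q$, $\beta \in \CH^l(X)_\Q$, $k+l = n$, $k,l > 0$. Each summand on the right lands in $\Q \cdot h^n \subset \CH_0(X)_\Q$: the $\Gamma$-term by Bezout on each component line; the big-diagonal terms $j_{ij*}Q(h_1, h_2)$ by the projection formula (which reduces their action to intersection with a single divisor class on $X$); and $P(h_1,h_2,h_3)$ trivially. The assumption $a_0 \neq 0$ keeps the leading coefficient on the left nonzero, while $a_1 \neq a_0$ prevents the three big-diagonal correction terms from conspiring to trivialize the identity.

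For (iii), in the split case $E = \oplus_{i=1}^r \sO_{\P}(d_i)$ with $d_i \geq 2$, the condition $(*)$ holds because evaluation of $\sO_\ell(d)$ at three points on a line $\ell$ is surjective as soon as $d \geq 2$. The Chern classes of $E$ are elementary symmetric functions in $\{d_i h\}$, so the formula for $Q$ specializes to an explicit polynomial in $d_1, \ldots, d_r$, and the inequalities $a_0 \neq 0$ and $a_1 \neq a_0$ reduce to a direct algebraic check using $\sum d_i = n+r+1$ and $d_i \geq 2$. The main obstacle throughout is the spreading step in (i): one must show that the residual class on $U$ is zero in rational equivalence, not merely in cohomology. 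This is where hypothesis $(*)$ does essential work, together with the Bloch-Srinivas principle (Theorem~\ref{BS}), which promotes cohomological vanishing to rational-equivalence vanishing on the relative incidence schemes that $(*)$ forces to be projective bundles over the appropriate configuration spaces.
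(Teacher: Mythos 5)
Your overall architecture (decompose $\Gamma$ away from the small diagonal, recover a multiple of $\delta_X$ by localization, then apply the result as a correspondence) matches the paper, but two essential steps are missing or wrong. First, the coefficient $a_0\deg(X)$ in front of $\delta_X$ is \emph{not} obtained as "the excess-intersection contribution of $\sG\cap\delta_{\sX/B}$". The localization exact sequence $\CH_n(X)\to\CH_n(X^3)\to\CH_n(X^3\setminus\delta_X)\to0$ only produces an \emph{undetermined} integer $N$ in front of $\delta_X$; precisely because everything is computed on the complement of $\delta_X$ (and of $\delta_\P$, where the collinearity locus degenerates), no intersection-theoretic computation at the small diagonal is available. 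The paper pins down $N=a_0\deg(X)$, and simultaneously the relations $a_i+a_j=-b_{ijn}\deg(X)$, by pushing the identity forward under $\pr_{12}:X^3\to X^2$ and invoking the \emph{non-existence} of a Bloch--Srinivas decomposition of $\Delta_X$ for a variety with $H^{n,0}(X)\neq0$ (Lemma \ref{coefflemma}). Your invocation of Bloch--Srinivas runs in the opposite and incorrect direction: Theorem \ref{BS} produces a decomposition of the diagonal \emph{from} a smallness hypothesis on $\CH_0$; it does not "promote cohomological vanishing to rational-equivalence vanishing". Without the coefficient-identification step, part (i) is not proved and the relation between $P$ and $Q$ needed later is unavailable. (Relatedly, your spreading step --- that any cycle on the open complement restricting from all of $B$ comes from $(\P^{n+r})^3\times B$ --- is asserted, not proved; the paper avoids it entirely by exhibiting $\Gamma_o$ as $c_{n-r+1}(F|_V)$ for an explicit bundle $F$ on the collinearity locus $W$ and using the refined Gysin compatibility $i_{2*}i_1^!=i_1^!i_{3*}$ to land in the known ring $\CH^*(\P^{\times3})$, with an excess-intersection correction of rank $r-1$ along the big diagonals that produces the factor $c_{r-1}$ in formula (\ref{Q}).)

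Second, in (ii) your claim that "each summand on the right lands in $\Q\cdot h^n$" is circular for the terms $j_{13*}Q$ and $j_{23*}Q$: applied to $Z\times Z'$ these produce $a_l\deg(Z')\cdot Z\bullet h^l$ and $a_k\deg(Z)\cdot Z'\bullet h^k$, and the statement that $Z\bullet h^l\in\Q\cdot h^n$ is not a consequence of the projection formula --- it is a nontrivial special case of the theorem being proved. The paper handles this by first specializing the correspondence identity to $Z'=h$ (Lemma \ref{laststep}), where the unwanted terms collapse to $(a_0-a_1)\deg(X)\bigl(Z\bullet h^{n-k}-\tfrac{\deg Z}{\deg X}h^n\bigr)=0$; this is the precise point where $a_1\neq a_0$ is used, and it requires the relation $a_i+a_j=-b_{ijn}\deg(X)$ from the coefficient lemma you skipped. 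Your sketch of (iii) is acceptable in outline, though the actual verification rests on the explicit splitting $M\isom\bigoplus_i\bigoplus_{j=1}^{d_i-2}\sO_\P(j)\boxtimes\sO_\P(d_i-j)$, giving $a_0=\prod_i(d_i-1)!$ and $a_1=a_0\cdot\bigl(\sum_i\sum_j\tfrac{j}{d_i-j}+n+2\bigr)$.
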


A word for the conditions $a_0\neq 0$ and $a_1\neq a_0$ appeared above: they are (mild) numerical conditions on the Chern
numbers of the vector bundle $E$, which are satisfied for complete
intersections (see (iii)). While for the full generality as in (i) and
(ii), it seems that we have to add them, but presumably they are
automatic under the positivity condition $(*)$ (see Remark \ref{nonsplit}).

Again, the above degeneration property of the intersection product is remarkable since $\CH_0$ is very huge. To emphasize the principle that the small diagonal controls the intersection product, we point out that
part (ii) of the preceding theorem is obtained by applying the
equality of part (i) as correspondences to an
exterior product of two algebraic cycles.

We will make a comparison of our result (iii) for Calabi-Yau complete intersections with Beauville's `weak splitting principle' in \cite{MR2187148} for holomorphic symplectic varieties, which says\footnote{This is the strengthened version conjectured by Voisin in \cite{MR2435839}.} that
any polynomial relation between the \emph{cohomological} Chern
classes of lines bundles and the tangent bundle holds already for
their \emph{Chow-theoretical} Chern classes. In our case we prove
more: any \emph{decomposable} $\Q$-coefficient 0-cycle is rational
equivalent to 0 if and only if it has degree 0, see Remark \ref{BB}.

We also give in \S\ref{examplesection} an example of a surface $S$ in $\P^3$ of general type,
such that $$\im\left(\bullet: \CH^1(S)_{\Q}\times \CH^1(S)_{\Q}\to
\CH_0(S)_{\Q}\right)\supsetneqq \Q\cdot h^2,$$ where
$h=c_1(\sO_S(1))\in \CH^1(S)$, in the contrary of the result for K3
surfaces proved in \cite{MR2047674}. This example supports the
feeling that the Calabi-Yau condition gives
some strong restrictions on the multiplicative structure of the Chow ring.\\

The second direction of generalization is about higher powers
($\geq 3$) of hypersurfaces with ample or trivial
canonical bundle. The objective is to decompose the smallest
diagonal in a higher self-product, and deduce from it some
implication on the multiplicative structure of the Chow ring of the
variety. That such a decomposition should exist was suggested by Nori to be the natural generalization of Theorem \ref{V}. We also refer to \cite{MR2339831} for similar results in the case of curves. Now we state our result precisely:
\begin{thm}[=Theorem \ref{main3} + Theorem \ref{main3cor}]\label{thmB}
  Let $X$ be a smooth hypersurface in $\P^{n+1}$ of degree $d$ with
  $d\geq n+2$. Let $k=d+1-n\geq 3$. Then

  \begin{enumerate}
    \item[(i)] One of the following two cases occurs:
    \begin{enumerate}
    \item There exist rational numbers $\lambda_j$ for $j=2,\cdots, k-1$, and a symmetric
  homogeneous polynomial $P$ of degree $n(k-1)$, such that in
  $\CH_n(X^k)_{\Q}$ we have:
\begin{equation*}
\delta_X=(-1)^{k-1}\frac{1}{d!}\cdot\Gamma+\sum_{i=1}^kD_i+\sum_{j=2}^{k-2}\lambda_j\sum_{|I|=j}D_I+
P(h_1,\cdots,h_k),
\end{equation*}
where the bigger diagonals $D_I$ are defined in (\ref{DefD1}) or (\ref{DefD2}), and $D_i:=D_{\{i\}}$.\\
Or

\item There exist a (smallest) integer $3\leq l<k$, rational numbers $\lambda_j$ for $j=2,\cdots, l-2$,
and a symmetric homogeneous polynomial $P$ of degree $n(l-1)$, such
that in
  $\CH_n(X^l)_{\Q}$ we have:
\begin{equation*}
\delta_X=\sum_{i=1}^lD_i+\sum_{j=2}^{l-2}\lambda_j\sum_{|I|=j}D_I+
P(h_1,\cdots,h_l).
\end{equation*}
  \end{enumerate}
Moreover, $\Gamma=0$ if $d\geq 2n$.

    \item[(ii)] For any strictly positive
  integers $i_1, i_2,\cdots, i_{k-1}\in \N^*$ with $\sum_{j=1}^{k-1}i_j=n$, the image
  $$\im\left(\CH^{i_1}(X)_\Q\times\CH^{i_2}(X)_\Q\times\cdots\times\CH^{i_{k-1}}(X)_\Q\lra{\bullet} \CH_0(X)_\Q\right)=\Q\cdot h^{n}$$
  \end{enumerate}
\end{thm}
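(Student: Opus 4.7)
My plan is to generalize the method of Voisin's Theorem~\ref{V} (the case $k=3$) by analyzing the universal family of lines in $\P^{n+1}$ and their incidence with $X$. Let $G=G(2,n+2)$ be the Grassmannian of lines, $\bar{L}=\P(\mathcal{U})\to G$ the universal $\P^1$-bundle with evaluation $\iota\colon\bar{L}\to\P^{n+1}$, and $\bar{L}_X:=\iota^{-1}(X)$, a divisor of relative degree $d$. Consider the $k$-fold fibered product $Z_k:=\bar{L}_X\times_G\cdots\times_G\bar{L}_X$, of dimension $2n$, with natural evaluation $e\colon Z_k\to X^k$ whose image is the collinear locus. The class of $Z_k$ in $\CH^k(\bar{L}\times_G\cdots\times_G\bar{L})$ is $\prod_{i=1}^k\pi_i^*[\bar{L}_X]$; expanding each factor as a linear combination of the pulled-back hyperplane class on $\P^{n+1}$ and the relative $\sO(1)$-class of $\bar{L}\to G$, pushing the resulting monomials forward via $e$ to $X^k$, and applying an excess/Segre-class analysis over the loci where $e$ has positive-dimensional fibers yields an identity in $\CH_n(X^k)_\Q$ relating $[\delta_X]$, the big diagonals $D_I$, $[\Gamma]$, and $P(h_1,\dots,h_k)$.

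The excess loci contribute as follows: over $\Gamma=\bigcup_{t\in F(X)}(\P^1_t)^k$ the fibers are entire $(\P^1)^k$'s, and a residue-type computation on the relative Fano scheme (generalizing the $k=3$ case of \cite{VoiFamilyK3}) produces the coefficient $(-1)^{k-1}/d!$ of $[\Gamma]$, which is automatically $0$ when $d\geq 2n$ since then $F(X)=\emptyset$; over $\delta_X$ the fibers consist of lines osculating $X$ to order $\geq k$ at a point, yielding (via a Segre calculation on the $\P^n$-bundle of lines through a variable point) a coefficient $a_0\deg(X)\cdot[\delta_X]$ plus lower-dimensional corrections on the $D_I$; the remaining pure-hyperplane monomials assemble into $P(h_1,\dots,h_k)$. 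The dichotomy in (i) depends on whether $a_0\neq 0$: if so, case (a) follows by dividing; if $a_0=0$, case (b) is obtained by pushing the resulting identity forward along a projection $X^k\to X^l$ (integrating out surplus factors against suitable powers of $h$) to the smallest $3\leq l<k$ for which the $\delta_X$-coefficient becomes non-zero, with the $\Gamma$-term automatically absent because $\dim\Gamma_l=(n-k)+l<n$.

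For part (ii), apply the decomposition of (i) as an equality of correspondences from $X^{k-1}$ to $X$ evaluated on $\alpha=\alpha_1\times\cdots\times\alpha_{k-1}$ with $\alpha_j\in\CH^{i_j}(X)_\Q$, $i_j\geq 1$, $\sum_j i_j=n$. The left side equals the intersection product $\alpha_1\cdots\alpha_{k-1}\in\CH_0(X)_\Q$. On the right, the polynomial $P(h_1,\dots,h_k)$ contributes a $\Q$-linear combination of terms $\prod_j\deg(\alpha_j\cdot h^{n-i_j})\cdot h^n$, lying in $\Q\cdot h^n$. Each $D_I$, constructed as a push-forward of a diagonal embedding with insertions of $c_X=h^n/d$ at positions in $I$, acts via partial intersections involving $\alpha_j\cdot c_X$; since $\alpha_j\cdot c_X=0$ by dimension whenever $i_j\geq 1$, all such contributions vanish except the one where $c_X$ is inserted at the output position, producing $\deg(\alpha_1\cdots\alpha_{k-1})\cdot c_X\in\Q\cdot h^n$. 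The $\Gamma$-term vanishes entirely since either $\Gamma=0$ (if $d\geq 2n$) or some $i_j\geq 2$ forces $\alpha_j|_{\P^1_t}=0$ (if $d<2n$, forcing $k-1<n$). Hence $\alpha_1\cdots\alpha_{k-1}\in\Q\cdot h^n$. In case (b) one first groups the $k-1$ input cycles into $l-1$ non-empty subsets (possible since $k-1\geq l-1$), replaces each group by its intersection product (still of positive codim), and then applies the $X^l$-decomposition.

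The main technical obstacle is the Chern/Segre-class expansion of $\prod_{i=1}^{k}\pi_i^*[\bar{L}_X]$ and the precise extraction of the coefficients $a_0$ and $(-1)^{k-1}/d!$; in particular, the residue-type calculation along $F(X)$ identifying the $\Gamma$-coefficient and the delicate matching of the remaining excess terms with the $D_I$'s require substantial combinatorial bookkeeping mirroring the $k=3$ calculation of \cite{VoiFamilyK3}. A separate subtle point is verifying that in case (b) a smallest $l$ indeed exists at which the descended decomposition acquires a non-zero $\delta_X$-coefficient, rather than trivializing all the way down to $l=2$.
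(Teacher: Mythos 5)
Your overall strategy (collinear points via the universal line over the Grassmannian, excess intersection along the partial diagonals, then reading the resulting identity as a correspondence) is the same as the paper's, and your treatment of part (ii) is essentially the paper's argument. But the proposal leaves unresolved precisely the two steps that constitute the actual content of the proof of part (i). First, the coefficient of the secondary diagonals $D_i$ (the number $(-1)^k d!$, equivalently the coefficient $(-1)^{k-1}/d!$ of $\Gamma$ after normalization) is not a single Segre-class computation that "mirrors the $k=3$ case": for $k\geq 4$ the cycle $\prod_i\pi_i^*[\bar L_X]$ has excess components along \emph{all} partial diagonals $\alpha(V_s)$, $2\leq s<k$, and the paper has to set up a whole family of cycles $\gamma_{\underline a}=c_n(F(\underline a))$ indexed by tuples $\underline a$ with $\sum a_i=k$, prove a recursion among them (Proposition \ref{recursion}), track how the "types" $A$, $B_j$ transform under diagonal push-forwards, and finally evaluate the recursion using the combinatorial identity $\sum_{j}(-1)^j(j-1)!\,\#N^m_j=0$ for surjection counts. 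You explicitly defer all of this as "substantial combinatorial bookkeeping"; it is the heart of the theorem, not bookkeeping.

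Second, your framing of the dichotomy as "$a_0\neq 0$ vs.\ $a_0=0$" misreads where the dichotomy comes from. The coefficient of $\delta_X$ is \emph{not} computed by an osculation/Segre analysis over $\delta_X$; the paper works entirely on $X^k\setminus\delta_X$ and reintroduces $\delta_X$ via the localization sequence $\CH_n(X)\to\CH_n(X^k)\to\CH_n(X^k\setminus\delta_X)\to 0$, so its multiplicity $\lambda_0$ is an \emph{undetermined} constant. The dichotomy is whether $\lambda_0+\lambda_1=0$ or not, and in the second case one projects to $X^{k-1}$ (killing $\Gamma$), divides by $\lambda_0+\lambda_1$, and iterates. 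You correctly flag that one must show this descent cannot continue down to $l=2$, but you give no argument; the paper's proof is that a decomposition at $l=2$ would be a Bloch--Srinivas decomposition of $\Delta_X$ supported on $D\times X+X\times D'$, which is impossible because $H^{n,0}(X)\neq 0$ (the correspondence on $H^n(X)$ would then have image of coniveau $\geq 1$). Without this step the statement as formulated (a decomposition exists at some $l\geq 3$) is not established. Finally, note that the theorem requires $X$ \emph{general}: genericity is used both for the transversality of the collinear incidence loci and for $F(X)$ to have dimension $n-k$ (hence $\Gamma=0$ when $d\geq 2n$).
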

We remark that when $d>n+2$,  part (ii) of the previous theorem is
in fact implied by the Bloch-Beilinson conjecture, see Remark \ref{rmkcor}.\\

The main line of the proofs of the above theorems is the same as in
Voisin's paper \cite{VoiFamilyK3}: one proceeds in three steps:
\begin{itemize}
  \item Firstly, one `decomposes' the
class of $\Gamma$ (see Theorem \ref{V} for its definition)
restricting to the complementary of the small diagonal, by means of
a careful study of the geometry of collinear points on the variety.
  \item Secondly, by the localization exact
sequence for Chow groups, we obtain a decomposition of certain
multiple of the small diagonal in terms of $\Gamma$ and other cycles
of diagonal type or coming from the ambiant space.
  \item Thirdly, once we have a decomposition of small diagonal,
  regarded as an equality of correspondences, we will draw
  consequences on the multiplicative structure of the Chow ring.
\end{itemize}
We remark that at some point of the proof, one should verify that the `multiple' appeared in the decomposition is non-zero to get a genuine decomposition of the small diagonal, and also some coefficients should be distinct to deduce the desired conclusion on the multiplicative structure of Chow rings. These are too easy to be noticed in \cite{VoiFamilyK3}, but become the major difficulties in
  our present paper.

The paper consists of two parts. The first part deals with the first
direction of generalizations explained above, where we start by the
geometry of collinear points to deduce a decomposition of a certain
multiple of the small diagonal; then we deduce from it our result on the multiplicative structure of Chow rings; after that  we treat the complete intersection case to obtain the main results; finally we construct an example of surface in $\P^3$ such that the image of intersection product of line bundles is `non-trivial', on the contrary of the Calabi-Yau case. The second part deals with the second direction of generalizations explained above, where we also follow the line of
geometry of collinear points, decomposition of the smallest
diagonal, and finally consequence on Chow ring's structure.

We will work over the complex numbers throughout this paper for
simplicity, but all the results and proofs go through for any
uncountable algebraic closed field of characteristic zero.

\paragraph*{Acknowledgements:}
I would like to thank my thesis advisor Claire Voisin for bringing me into attention of her paper
\cite{VoiFamilyK3}, and for her help through the construction of the example in \S\ref{examplesection} as well as her kindness to share with me a
remark of Nori on \cite{VoiFamilyK3}, which motivates \S2.

\section{Calabi-Yau complete intersections}

Let $E$ be a rank $r$ vector bundle on the complex projective space
$\P:=\P^{n+r}$. We always make the following

\vspace{0.2cm} \noindent\textbf{Positivity Assumption ($*$):} The
evaluation map $H^0(\P,E)\to \oplus_{i=1}^3E_{y_i}$ is surjective
for any three collinear points $y_1, y_2, y_3$ in $\P$, where $E_y$
means the fibre of $E$ over $y$. \vspace{0.2cm}

\noindent We note that this condition implies in particular:\\
($*'$) $E$ is globally generated.\\
($*''$) The restriction of $E(-2):=E\otimes \sO_{\P}(-2)$ to each
line is globally generated\footnote{Equivalently speaking, along any
line $\P^1$, the splitting type
$E|_{\P^1}=\oplus_{i=1}^r\sO_{\P^1}(d_i)$ satisfies $d_1\geq d_2\geq
\cdots\geq d_r\geq 2$.}.\\

Let $d\in \N^*$ be such that $\det(E)=\sO_{\P}(d)$. Let $f\in H^0(\P^{n+r}, E)$ be a general global section of $E$, and $$X:=V(f)\subset \P^{n+r}$$ be the subscheme of $\P$ defined by $f$.

\begin{rmk}\upshape
The assumption that $f$ is generic implies that $X$
 is smooth of expected dimension $n$.
Indeed, consider the incidence variety
$$I:=\left\{([f],x)\in \P\left(H^0(\P^{n+r},E)\right)\times
\P^{n+r}~|~ f(x)=0\right\}.$$ Let $q:I\to
\P\left(H^0(\P^{n+r},E)\right)$ and $p: I\to \P^{n+r}$ be the two
natural projections. The global generated property of $E$ implies
that $p$ is a projective bundle, thus $I$ is smooth of dimension
$h^0(\P^{n+r},E)-1+n$. Therefore the theorem of generic smoothness
applied to $q$ proves
 the assertion.
\end{rmk}

We are interested in the case when $X$ is of \emph{Calabi-Yau type}: $K_X=0$, or equivalently,

\vspace{0.2cm}
\noindent\textbf{Calabi-Yau Assumption:} $d=n+r+1$.
\vspace{0.2cm}

Throughout this section, we will always work in the above setting. A
typical example of such situation is when
$E=\bigoplus_{i=1}^r\sO_{\P}(d_i)$ with $d_1\geq d_2\geq\cdots\geq
d_r\geq2$, $X$ is hence a smooth Calabi-Yau complete intersection of
multi-degree $(d_1,\cdots,d_r)$ and $d=\sum_{i=1}^rd_i=n+r+1$. Since
the case of K3 surfaces is well treated in \cite{MR2047674}, we
assume $n\geq 3$ from now on.

\subsection{Decomposition of small diagonals}
Like in the paper \cite{VoiFamilyK3}, our strategy is to express the class of the small diagonal by investigating the lines
contained in $X$. The decomposition result is Corollary \ref{decomp} and see Theorem \ref{main1} for its more precise form.

Let $G:=\Gr(\P^1, \P^{n+r})$ be
the Grassmannian of projective lines in $\P$, and for a point $t\in
G$, we denote by $\P^1_t$ the corresponding line. Define the
\emph{variety of lines} of $X$:
$$F(X):=\left\{t\in G~|~ \P^1_t\subset X\right\}.$$
\begin{lemma} \label{F(X)}
(If $f$ is general,) $F(X)$ is non-empty, smooth and of dimension
$n-3$.
\end{lemma}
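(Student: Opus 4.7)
The plan is the classical incidence-variety argument. Introduce
$$\sI := \Bigl\{\, ([f], t) \in \P\bigl(H^0(\P, E)\bigr) \times G \;:\; f|_{\P^1_t} = 0 \,\Bigr\},$$
with the two projections $p : \sI \to G$ and $q : \sI \to \P\bigl(H^0(\P, E)\bigr)$. The fibre $q^{-1}([f])$ is precisely $F\bigl(V(f)\bigr)$, so the lemma follows once I check that, for $[f]$ general, this fibre is non-empty, smooth, and of dimension $n-3$.

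First I would analyse $p$. Writing $\ell := \P^1_t$ for brevity, condition $(*'')$ gives a splitting $E|_\ell \cong \bigoplus_{i=1}^r \sO_{\P^1}(a_i)$ with $a_i \geq 2$, and the Calabi-Yau hypothesis $\sum_i a_i = d = n+r+1$ yields
$$h^0(\ell, E|_\ell) \;=\; \sum_{i=1}^r (a_i+1) \;=\; n+2r+1.$$
I would then verify that the restriction $\rho_t : H^0(\P, E) \to H^0(\ell, E|_\ell)$ is surjective for every $t \in G$, equivalently $H^1(\P, E \otimes \mathcal I_\ell) = 0$, by tensoring the Koszul resolution of $\mathcal I_\ell$ with $E$ and invoking the vanishing of intermediate cohomology of twists of $E$ on $\P^{n+r}$ --- immediate in the split case $E = \bigoplus \sO_\P(d_i)$ from Bott, and available in general from the positivity $(*)$. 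Consequently $p$ is a projective bundle of relative dimension $h^0(\P, E) - n - 2r - 2$, so $\sI$ is smooth irreducible of dimension
$$\dim \sI \;=\; 2(n+r-1) + h^0(\P, E) - n - 2r - 2 \;=\; h^0(\P, E) + n - 4.$$

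Next I would apply generic smoothness to $q$. In characteristic zero it suffices to exhibit one pair $([f_0], t_0) \in \sI$ at which the differential $dq$ is surjective. By deformation theory this amounts to $H^1(\ell_0, N_{\ell_0/V(f_0)}) = 0$, where $\ell_0 := \P^1_{t_0}$. From the normal-bundle sequence
$$0 \to N_{\ell_0/V(f_0)} \to N_{\ell_0/\P} \to E|_{\ell_0} \to 0,$$
with $N_{\ell_0/\P} \cong \sO_{\P^1}(1)^{\oplus (n+r-1)}$ and $N_{V(f_0)/\P}|_{\ell_0} \cong E|_{\ell_0}$ by transversality, one reads off that $N_{\ell_0/V(f_0)}$ has rank $n-1$, determinant $\sO(-2)$, and Euler characteristic $2(n+r-1) - (n+2r+1) = n-3$. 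In the split case, a Fermat-type section $f_0$ and a suitable coordinate line $\ell_0$ produce the balanced splitting $N_{\ell_0/V(f_0)} \cong \sO(-1)^{\oplus 2} \oplus \sO^{\oplus (n-3)}$, for which $H^1$ vanishes; the non-split case then follows by openness of the conditions $(*)$ and CY.

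Combining the two steps, dominance of $q$ (from the existence of $([f_0], t_0)$) together with generic smoothness delivers simultaneously non-emptiness, smoothness, and dimension $\dim \sI - \dim \P\bigl(H^0(\P, E)\bigr) = n-3$ for $F(X) = q^{-1}([f])$ with $[f]$ general. The principal obstacle is pinning down the explicit pair $([f_0], \ell_0)$ with balanced normal bundle, since the numerical inequality $\dim \sI \geq \dim \P\bigl(H^0(\P, E)\bigr)$ alone does not force dominance of $q$; the intermediate cohomology vanishing $H^1(\P, E \otimes \mathcal I_\ell) = 0$ used in the first step is the other technical linchpin.
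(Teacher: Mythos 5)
Your overall strategy coincides with the paper's: your incidence variety $\sI\subset\P\bigl(H^0(\P,E)\bigr)\times G$ is just the (projectivized) total space of the bundle $p_*q^*E$ on $G$ whose section $\sigma_f$ cuts out $F(X)$, and your dimension count $\dim\sI-\dim\P\bigl(H^0(\P,E)\bigr)=n-3$ is the paper's expected-dimension computation $\dim G-\rank(p_*q^*E)=n-3$. The difference is that after this computation the paper delegates both smoothness and non-emptiness to \cite{0913.14015}, whereas you attempt to supply the two missing inputs yourself, and both of your justifications have genuine gaps.

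First, the surjectivity of $\rho_t\colon H^0(\P,E)\to H^0(\ell,E|_\ell)$ (equivalently the vanishing of $H^1$ of $E$ twisted by the ideal sheaf of $\ell$), which you need for $p\colon\sI\to G$ to be a projective bundle of the stated fibre dimension, does \emph{not} follow from $(*)$: that hypothesis only gives surjectivity onto the $3r$-dimensional space $\oplus_{i=1}^3E_{y_i}$, while $h^0(\ell,E|_\ell)=n+2r+1$, and there is no "intermediate cohomology vanishing" available for a general $E$ satisfying $(*)$. (In the split case it is indeed immediate, since degree-$d_i$ polynomials restrict surjectively to a line.) Second, and more seriously, your treatment of dominance of $q$ does not close: exhibiting a Fermat-type pair $([f_0],\ell_0)$ with balanced normal bundle only works when $E$ splits, and the non-split case does \emph{not} "follow by openness of the conditions $(*)$ and CY" --- a fixed non-split $E$ cannot be specialized to $\oplus_i\sO_{\P}(d_i)$, and openness of hypotheses on $E$ produces no section--line pair with $H^1(N_{\ell_0/V(f_0)})=0$ for that $E$. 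These are precisely the two points the paper outsources to the cited Debarre--Manivel-type results (where, for instance, non-emptiness comes from the non-vanishing of the top Chern class of $p_*q^*E$ on $G$ rather than from an explicit example). So you have correctly located the crux --- the projective-bundle structure of $p$ and the dominance of $q$ --- but your arguments for both fail outside the complete-intersection case, which is exactly the generality the lemma is stated in.
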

\begin{proof}
Let $p:L\to G$ be the universal line over the Grassmannian, and
$q:L\to \P$ be the natural morphism. Then the section $f$ of $E$
gives rise to a section $\sigma_f$ of the vector bundle $p_*q^*E$ on
$G$, and $F(X)=(\sigma_f=0)$ by definition. Thanks to our positivity
assumption $(*'')$, the vector bundle $E|_{\P^1_t}$ has splitting
type $\oplus_{i=1}^r\sO_{\P^1}(d_i)$ satisfying $d_1\geq d_2\geq
\cdots\geq d_r\geq 2$ for any $t\in G$, thus
$$\dim H^0(\P^1_t, E|_{\P^1_t})=\sum_{i=1}^r(d_i+1)=d+r=n+2r+1.$$
The last equality comes from the Calabi-Yau assumption. Therefore by Grauert's base-change theorem, $p_*q^*E$ is a vector bundle of rank $n+2r+1$, and thus the \emph{expected} dimension of $F(X)$ is
 $$\exp\dim F(X)=\dim G-\rank(p_*q^*E)=2(n+r-1)-(n+2r+1)=n-3\geq 0.$$
Now we can use the results in for example \cite{0913.14015} to conclude.
\end{proof}
We define
\begin{equation}\label{Gamma}
 \Gamma:=\bigcup_{t\in F(X)}\P^1_t\times\P^1_t\times\P^1_t\subset X^3.
\end{equation}
It is then an $n$-dimensional subvariety of $X^3$.

%
%
%
%

To get a decomposition of the
small diagonal, we first decompose or calculate the class of
$\Gamma_o$ in $\CH_n(X^3\backslash\delta_X)$, where $\Gamma_o$ is
the restriction of $\Gamma$ to $X^3\backslash\delta_X$.

Before doing so, let us make some preparatory geometric constructions. Let $\delta_{\P}$ be the small diagonal of $\P^{\times 3}:=\P\times \P\times \P$. Define the following closed subvariety of $\P^{\times 3}\backslash\delta_{\P}$:
$$W:= \left\{(y_1,y_2,y_3)\in \P^{\times 3}~|~y_1, y_2, y_3~ \text{are collinear}\right\}\backslash\delta_{\P}.$$
In other words, if we denote by $L\to G$ the universal line over the
Grassmannian $G$ of projective lines, then in fact $W= L\times_G
L\times_G L\backslash \delta_{L}$. In particular $W$ is a smooth
variety with $$\dim W=\dim G+3=2n+2r+1.$$

Similarly, let $\delta_X$ be the small diagonal of $X^3$. We define a closed subvariety $V$ of $X^3\backslash\delta_X$ by taking the closure of
$V_o:=\left\{(x_1,x_2,x_3)\in X^3~|~x_1, x_2, x_3~ \text{are collinear and distinct}\right\}$
in $X^3\backslash\delta_X$:
$$V:=\bar{V_o}.$$
We remark that the boundary $\partial V:=V\backslash V_o$ consists, up to a permutation of the three coordinates, of points of the form $(x,x,x')$ with $x\neq x'$ such that the line joining $x, x'$ is tangent to $X$ at $x$. \\
We will also need the `big' diagonals in $X^3\backslash\delta_X$:
$$\Delta_{12}:=\left\{(x,x,x')\in X^3~|~ x\neq x'\right\},$$
and $\Delta_{13}, \Delta_{23}$ are defined in the same way.

\begin{lemma} \label{intersectionlemma}
Consider the intersection of $W$ and $X^3\backslash\delta_X$ in $\P^{\times 3}\backslash\delta_{\P}$. The intersection scheme has four irreducible components:
\begin{equation}\label{intersection}
 W\cap (X^3\backslash\delta_X)= V\cup \Delta_{12}\cup\Delta_{13}\cup\Delta_{23}.
\end{equation}
The intersection along $V$ is transversal, in particular $\dim V=2n-r+1$. The intersection along $\Delta_{ij}$ is not proper, having excess dimension $r-1$, but the multiplicity of $\Delta_{ij}$ in the intersection scheme is 1, where $1\leq i<j\leq 3$. In particular, the intersection scheme is reduced and the above identity (\ref{intersection}) also holds scheme-theoretically:
\begin{displaymath}
\xymatrix{
V\cup \Delta_{12}\cup\Delta_{13}\cup\Delta_{23} \cart \ar[r] \ar[d] & X^3\backslash\delta_X \ar[d]\\
W \ar[r] &  \P^{\times 3}\backslash\delta_{\P}
}
\end{displaymath}
\end{lemma}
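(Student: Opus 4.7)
The plan is to verify in turn the three separate assertions of the lemma --- the set-theoretic decomposition, the dimension and transversality along $V$, and the multiplicity-one statement along each $\Delta_{ij}$ --- with the last being the main technical input.

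First I would dispatch the set-theoretic identification. Any point $(x_1,x_2,x_3)$ of $W\cap (X^3\backslash\delta_X)$ is a triple of collinear points on $X$ not all equal: if they are pairwise distinct the triple lies in $V_o\subset V$, while if exactly two of them coincide (say $x_1=x_2\neq x_3$), the triple lies tautologically in $\Delta_{12}$, since any two points of $\P^{n+r}$ are collinear. The reverse containment is immediate from the definitions of $V$ and $\Delta_{ij}$.

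For the dimension and transversality along $V$, I would introduce the incidence variety
$$
I:=\bigl\{(f,t,x_1,x_2,x_3)\in H^0(\P,E)\times G\times (\P^{n+r})^{\times 3}:x_i\in\P^1_t \text{ pairwise distinct},\ f(x_i)=0\bigr\}
$$
together with its projections $p$ to $H^0(\P,E)$ and $q$ to $G\times (\P^{n+r})^{\times 3}$. The positivity assumption $(*)$ says precisely that, over every distinct collinear triple, the fibre of $q$ is a linear subspace of $H^0(\P,E)$ of codimension $3r$, so $q$ exhibits $I$ as a vector bundle over a smooth irreducible base. Hence $I$ is smooth and irreducible of dimension $h^0(\P,E)+2n-r+1$, and generic smoothness applied to $p$ yields, for a general $f$, a smooth fibre of dimension $2n-r+1$ which maps isomorphically onto $V_o$ (three distinct collinear points determine a unique line). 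This gives simultaneously $\dim V=2n-r+1$ and the transversality of $W\cap X^3$ along $V_o$.

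The multiplicity-one assertion along $\Delta_{ij}$ is where the real work lies. Treating $\Delta_{12}$ by symmetry, I would perform a local tangent space computation at a generic point $(x,x,x')$, at which I may assume that the line $L_0:=\overline{xx'}$ is not tangent to $X$ at $x$, i.e.\ $T_xL_0\cap T_xX=0$. Choosing affine coordinates on $\P^{n+r}$ centred at $x$ with $L_0$ as the first axis, I write each $y_i=(a_i,b_i)\in\A^1\times\A^{n+r-1}$. On the open locus $a_3\neq a_1$, the subvariety $W$ is cut out by the $n+r-1$ equations
$$
(a_3-a_1)(b_2-b_1)-(a_2-a_1)(b_3-b_1)=0,
$$
whose linearisation at $((0,0),(0,0),(1,0))$ is $db_1=db_2$; hence $T_W=\{db_1=db_2\}$ at $(x,x,x')$. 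Intersecting with $T_{X^3}=T_xX\oplus T_xX\oplus T_{x'}X$ forces any vector in $T_W\cap T_{X^3}$ to satisfy $dy_1-dy_2\in T_xX\cap T_xL_0=0$, so $T_W\cap T_{X^3}=T_{\Delta_{12}}$ is $2n$-dimensional. Since the scheme-theoretic tangent space to $W\cap X^3$ at this point thus has dimension equal to $\dim\Delta_{12}$, its local ring is regular and coincides with $\sO_{\Delta_{12}}$, which is exactly the claimed multiplicity one.

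The main obstacle is this last step. Unlike Voisin's hypersurface case $r=1$, where the intersection along $\Delta_{ij}$ is already proper, here the excess dimension $r-1\geq 1$ precludes any purely numerical argument and forces one to inspect the scheme structure via tangent spaces. It is the generic non-tangency $T_xL_0\cap T_xX=0$, ultimately expressing that $X$ is not ruled by lines through a generic point, that closes up the computation.
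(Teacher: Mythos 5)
Your proposal is correct and follows essentially the same route as the paper: the set-theoretic identification is immediate, and the transversality along $V$ is obtained from the same incidence-variety/generic-smoothness argument using the positivity assumption $(*)$. The only divergence is cosmetic: for the multiplicity-one claim along $\Delta_{ij}$ the paper tersely reduces to the reducedness of $\Delta_{\P}\cap X^2=\Delta_X$ in $\P^{\times 2}$, whereas you verify generic reducedness directly by the tangent-space computation at a general point $(x,x,x')$ with $\overline{xx'}$ not tangent to $X$ at $x$ --- the same content, spelled out.
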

\begin{proof}
 It is obvious that (\ref{intersection}) holds set-theoretically. To verify (\ref{intersection}) scheme-theoretically, let $$W_o:=\left\{(y_1,y_2,y_3)\in \P^{\times 3}~|~ y_1, y_2, y_3 ~\text{are collinear and distinct}\right\}.$$ Consider the incidence variety $$I:=\left\{\big([f],(y_1,y_2,y_3)\big)\in \P\left(H^0(\P^{n+r},E)\right)\times W_o~|~ f(y_1)=f(y_2)=f(y_3)=0\right\}.$$ Let $q:I\to \P\left(H^0(\P^{n+r},E)\right)$ and $p:I\to W_o$ be the two natural projections. The positivity assumption $(*)$ means precisely that $p$ is a $\P^{h^0(\P,E)-1-3r}$-bundle, therefore $I$ is smooth of dimension $h^0(\P,E)+2n-r$. Since for general $f$, the corresponding variety $X$ contains a line (Lemma \ref{F(X)}), $q$ is of dominant. By the theorem of generic smoothness, the fibre of $q$, which is exactly $V_o$, is smooth of dimension $2n-r+1$. In particular, $V_o$ is reduced, of locally complete intersection in $W_o$ of codimension $3r$ as expected. In other words, the intersection is transversal along a general point\footnote{This suffices for the scheme-theoretical assertions concerning $V$, since we work over the complex numbers, there are enough (closed) points such that any algebraic condition satisfied by a general point is also satisfied by the generic point.}  of $V$.

The assertions concerning the big diagonals are easier: by passing to a general point of $\Delta_{ij}$, it amounts to prove that the intersection scheme of $\Delta_{\P}$ and $X^2$ in $\P^{\times 2}$ is $\Delta_X$ with multiplicity 1, (and of excess dimension $r$).
\end{proof}

Now we construct a vector bundle $F$ on $W$. Let $S$ be the
tautological rank 2 vector bundle on $W$, with fibre $S_{y_1y_2y_3}$
over a point $(y_1,y_2,y_3)\in W$ the 2-dimensional vector space
corresponding to the projective line $\P^1_{y_1y_2y_3}$ determined
by these three collinear points. Therefore $p:\P(S)\to W$ is the
$\P^1$-bundle of universal line, and it admits three tautological
sections $\sigma_i: W\to \P(S)$ determined by the points $y_i$,
where $i=1,2,3$. Let $q: \P(S)\to \P^{n+r}$ be the natural morphism.
We summarize the situation by the following diagram:
\begin{displaymath}
 \xymatrix{
\P(S) \ar[r]^{q} \ar[d]^{p} & \P\\
W\ar@/^/[u]<3ex>^{\sigma_i} \ar@/^/[u]<2ex> \ar@/^/[u]<1ex>
}
\end{displaymath}
Let $D_i$ be the image of section $\sigma_i$, which is a divisor of $\P(S)$, for $i=1,2,3$. We define the following sheaf on $W$:
\begin{equation}\label{F}
F:=p_*(q^*E\otimes \sO_{\P(S)}(-D_1-D_2-D_3))
\end{equation}

\begin{lemma}\label{FLemma}
 $F$ is a vector bundle on $W$ of rank $n-r+1$, with fibre $$F_{y_1y_2y_3}=H^0(\P^1_{y_1y_2y_3}, E|_{\P^1_{y_1y_2y_3}}\otimes\sO(-y_1-y_2-y_3)).$$
\end{lemma}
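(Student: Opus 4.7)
The plan is to apply cohomology and base change (Grauert's theorem) to the proper flat morphism $p:\P(S)\to W$ and the sheaf $\rF:=q^*E\otimes\sO_{\P(S)}(-D_1-D_2-D_3)$. The main task is therefore to (a) identify the restriction of $\rF$ to a fibre of $p$, (b) check that the cohomology of this restriction is constant in degree $0$ and vanishes in higher degrees, and (c) compute the rank.

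First I would note that the fibre of $p$ over a point $w=(y_1,y_2,y_3)\in W$ is canonically identified with the projective line $\P^1_{y_1y_2y_3}$, and that under this identification $q|_{p^{-1}(w)}$ becomes the inclusion $\P^1_{y_1y_2y_3}\inj\P$, so $(q^*E)|_{p^{-1}(w)}=E|_{\P^1_{y_1y_2y_3}}$. Since each $\sigma_i$ is a section of $p$, the divisor $D_i$ meets the fibre $p^{-1}(w)$ transversally in the single point $y_i$; hence $\sO_{\P(S)}(-D_1-D_2-D_3)|_{p^{-1}(w)}=\sO_{\P^1}(-y_1-y_2-y_3)$. Consequently
\[
\rF|_{p^{-1}(w)}=E|_{\P^1_{y_1y_2y_3}}\otimes\sO_{\P^1}(-y_1-y_2-y_3).
\]

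Next I would invoke the positivity assumption $(*'')$: along any line the splitting type is $E|_{\P^1}=\bigoplus_{i=1}^r\sO_{\P^1}(d_i)$ with $d_1\geq\cdots\geq d_r\geq 2$. Twisting by $\sO_{\P^1}(-3)$ gives $\bigoplus_{i=1}^r\sO_{\P^1}(d_i-3)$ with each exponent $\geq -1$, so $H^1$ of every such summand vanishes. Thus $H^1\bigl(p^{-1}(w),\rF|_{p^{-1}(w)}\bigr)=0$ for every $w\in W$, and by Grauert's theorem $R^1p_*\rF=0$ and $F=p_*\rF$ is locally free, with fibre $H^0\bigl(\P^1_{y_1y_2y_3}, E|_{\P^1_{y_1y_2y_3}}\otimes\sO(-y_1-y_2-y_3)\bigr)$, as claimed.

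Finally, for the rank, I would compute on any fibre using the splitting type and Riemann--Roch (or simply $h^0(\P^1,\sO(k))=k+1$ for $k\geq -1$):
\[
\rank F=\sum_{i=1}^r(d_i-3+1)=\sum_{i=1}^r d_i-2r=d-2r=(n+r+1)-2r=n-r+1,
\]
using $\det E=\sO_\P(d)$ and the Calabi-Yau condition $d=n+r+1$. There is essentially no obstacle here; the only point requiring a little care is keeping straight that the twist $\sO_{\P(S)}(-D_1-D_2-D_3)$ restricts fibrewise to $\sO_{\P^1}(-y_1-y_2-y_3)$ (which uses that the $\sigma_i$ are sections, not just multisections), and that assumption $(*'')$—rather than the weaker $(*')$—is the correct hypothesis ensuring the fibrewise $H^1$ vanishing.
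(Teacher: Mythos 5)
Your proof is correct and follows essentially the same route as the paper's: identify the fibrewise restriction of $q^*E\otimes\sO_{\P(S)}(-D_1-D_2-D_3)$ as $E|_{\P^1_{y_1y_2y_3}}\otimes\sO(-y_1-y_2-y_3)$, use the splitting-type consequence $(*'')$ of the positivity assumption to see that $h^0$ is constant (equal to $d-2r=n-r+1$), and conclude by Grauert's base-change theorem. The only cosmetic difference is that you additionally record the fibrewise $H^1$-vanishing, which the paper does not need since constancy of $h^0$ already suffices for Grauert.
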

\begin{proof}
 For any $(y_1, y_2, y_3)\in W$, it is obvious that the restriction of the vector bundle $q^*E\otimes \sO(-D_1-D_2-D_3)$ to the fibre $p^{-1}(y_1, y_2, y_3)=:\P^1_{y_1y_2y_3}$ is exactly $E|_{\P^1_{y_1y_2y_3}}\otimes\sO(-y_1-y_2-y_3)$. By the positivity assumption $(*)$, the splitting type of $E$ at $\P^1_{y_1y_2y_3}$ is $\oplus_{i=1}^r\sO(a_i)$ with $a_1\geq a_2\geq \cdots\geq a_r\geq 2$, we find that $$h^0\left(\P^1_{y_1y_2y_3}, E|_{\P^1_{y_1y_2y_3}}\otimes\sO(-y_1-y_2-y_3)\right)=h^0\left(\P^1, \oplus_{i=1}^r\sO(a_i-3)\right)=\sum_{i=1}^r(a_i-2)=d-2r=n-r+1,$$
which is independent of the point of $W$. Now the lemma is a consequence of Grauert's base-change theorem.
\end{proof}

Here is the motivation to introduce the vector bundle $F$: the section $f\in H^0(\P, E)$ gives rise to a section $s_f\in
H^0(V,F|_V)$ in the way that for any $(x_1,x_2,x_3)\in V$ the value
$s_f(x_1,x_2,x_3)$ is simply given by $f|_{\P^1_{x_1x_2x_3}}\in
F_{x_1x_2x_3}=H^0(\P^1_{x_1x_2x_3}, E|_{\P^1_{x_1x_2x_3}}\otimes
\sO(-x_1-x_2-x_3))$ (see Lemma \ref{FLemma}), because $f$ vanishes
on $x_i$ by definition. As a result,
\begin{lemma}\label{gamma2}
$c_{n-r+1}(F|_V)=\Gamma_o \in \CH_n(X^3\backslash\delta_X)$, where
$\Gamma_o$ is the restriction of the variety $\Gamma$ constructed in
(\ref{Gamma}) to the open subset $X^3\backslash\delta_X$.
\end{lemma}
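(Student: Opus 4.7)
The strategy is to identify $\Gamma_o$ with the zero scheme of the section $s_f$ of $F|_V$ and invoke the standard identity, for a regular section of a vector bundle, that the fundamental class of its zero scheme equals the top Chern class of the bundle.

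First I would establish the set-theoretic equality $Z(s_f) = \Gamma_o$ on $V$. For $(x_1,x_2,x_3) \in V$, the value $s_f(x_1,x_2,x_3) \in H^0(\P^1_{x_1x_2x_3}, E|_{\P^1}(-x_1-x_2-x_3))$ corresponds under the inclusion $E|_{\P^1}(-x_1-x_2-x_3) \hookrightarrow E|_{\P^1}$ to $f|_{\P^1_{x_1x_2x_3}}$ (this is well-defined precisely because $f$ already vanishes at each $x_i$). Hence $s_f(x_1,x_2,x_3) = 0$ iff $f|_{\P^1_{x_1x_2x_3}} \equiv 0$, iff $\P^1_{x_1x_2x_3} \subset X$, iff $(x_1,x_2,x_3) \in \Gamma_o$. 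This identification also extends to the boundary points in $\partial V$ where two of the $x_i$ coincide, via the tangency interpretation of the corresponding line.

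Next, a dimension count confirms that $s_f$ is a regular section. By Lemma \ref{F(X)} the variety $F(X)$ is smooth of dimension $n-3$, so $\Gamma_o$, which fibers (essentially) as $\P^1 \times \P^1 \times \P^1$ over $F(X)$ minus the small diagonal, has dimension $n$. On the other hand $\dim V = 2n-r+1$ and $\rank(F|_V) = n-r+1$, so the expected dimension of $Z(s_f)$ is $(2n-r+1)-(n-r+1)=n$. The codimension of the zero locus therefore matches the rank of the bundle.

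For the top Chern class formula $c_{n-r+1}(F|_V) \cap [V] = [Z(s_f)]$ in $\CH_n(V)$ to yield exactly $[\Gamma_o]$ (with multiplicity one), I need $Z(s_f)$ to be generically reduced along each component of $\Gamma_o$. This should follow from a universal incidence argument analogous to the proof of Lemma \ref{intersectionlemma}: consider $\sI := \{([f],(x_1,x_2,x_3)) \in \P H^0(\P,E) \times W : s_f(x_1,x_2,x_3)=0\}$; the positivity assumption $(*)$ together with the vanishing needed to place the triple in $V$ shows that $\sI$ is smooth of the expected dimension, and generic smoothness of the projection to $\P H^0(\P,E)$ then gives transversality of $s_f$ for general $f$. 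Combined with Lemma \ref{F(X)} (which already guarantees smoothness of $\Gamma_o$ for general $f$), this yields the reduced equality $[Z(s_f)] = [\Gamma_o]$. Pushing forward along $V \hookrightarrow X^3 \setminus \delta_X$ then gives the claimed identity in $\CH_n(X^3 \setminus \delta_X)$.

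The main obstacle is verifying transversality along the boundary $\partial V = V \setminus V_o$, where two of the points coincide and the geometric picture becomes a tangency condition. On $V_o$ itself the parameter-count argument is essentially already in Lemma \ref{intersectionlemma}, so the genuine work is to check that the smoothness of $\Gamma_o$ (or equivalently of $F(X)$) together with the expected-codimension property is enough to conclude regularity of $s_f$ on all of $V$, not just on the open part where the three points are distinct.
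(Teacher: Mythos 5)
Your proposal follows essentially the same route as the paper: identify $\Gamma_o$ as the zero locus of the section $s_f$ of $F|_V$ and observe that, since $\Gamma_o$ is $n$-dimensional by Lemma \ref{F(X)} (the expected dimension $\dim V-\rank F|_V=(2n-r+1)-(n-r+1)=n$), it represents the top Chern class. The paper's proof is exactly this two-line argument; your additional care about generic reducedness of the zero scheme (via the incidence-variety/generic-smoothness trick) is a legitimate refinement of a point the paper leaves implicit, not a different approach.
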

\begin{proof}
By construction, $\Gamma_o$ is exactly the zero locus of the section
$s_f$ of $F|_V$. By Lemma \ref{F(X)}, $\Gamma_o$ is $n$-dimensional,
thus represents the top Chern class of $F|_V$.
\end{proof}

Now consider the cartesian diagram (Lemma \ref{intersectionlemma}):
\begin{equation}\label{diagram}
\xymatrix{
V\cup \Delta_{12}\cup\Delta_{13}\cup\Delta_{23} \cart \ar@{^{(}->}[r]^(.7){i_2} \ar@{^{(}->}[d]_{i_4} & X^3\backslash\delta_X \ar@{^{(}->}[d]^{i_1}\\
W \ar@{^{(}->}[r]_{i_3} &  \P^{\times 3}\backslash\delta_{\P}
}
\end{equation}
Since $i_1$ is clearly a regular embedding, we can apply the theory
of refined Gysin maps of \cite{MR1644323} to the cycle
$c_{n-r+1}(F)\in \CH^{n-r+1}(W)$ in the above diagram. Before doing
so, recall that in Lemma \ref{intersectionlemma} we have observed
that the intersections along $\Delta_{ij}$'s are not proper. Let us
first calculate the excess normal bundles of them.

\begin{lemma}\label{excess}
For any $1\leq i<j\leq 3$, the excess normal sheaf along
$\Delta_{ij}\backslash V$ is a rank $r-1$ vector bundle isomorphic
to a quotient
$\frac{\pr_1^*E|_X}{\pr_1^*\sO_X(1)\otimes\pr_2^*\sO_X(-1)}$, where
we identify $\Delta_{ij}$ with $X\times X\backslash \Delta_X$, and
$\pr_i$ are the natural projections to two factors.
\end{lemma}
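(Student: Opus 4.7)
The plan is to compute the excess normal bundle directly as the cokernel of the natural inclusion $N_{\Delta_{ij}/W}\inj i'^* N_{Y/P}$, where $Y:=X^3\backslash\delta_X$, $P:=\P^{\times 3}\backslash\delta_\P$, and $i':\Delta_{ij}\inj Y$. By the $\gS_3$-symmetry permuting the three factors it suffices to treat $\Delta_{12}$. Since $X=V(f)\subset\P$, one has $N_{Y/P}=\bigoplus_{k=1}^3 \pr_k^*E|_X$, so pulling back to $\Delta_{12}\isom X\times X\backslash\Delta_X$ via $(x,x')\mapsto(x,x,x')$ yields $i'^*N_{Y/P}=\pr_1^*E|_X\oplus\pr_1^*E|_X\oplus\pr_2^*E|_X$, of total rank $3r$.

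Next I compute $N_{\Delta_{12}/W}$ using a $\P^1$-bundle structure on $W$. On the open $U:=W\cap\{y_1\neq y_3\}$ containing $\Delta_{12}\backslash V$, the forgetful projection $\pi:U\to\P\times\P\backslash\Delta_\P$ given by $(y_1,y_2,y_3)\mapsto(y_1,y_3)$ realizes $U$ as the projective bundle $\P(S)$ associated to $S:=\pi_1^*\sO_\P(-1)\oplus\pi_2^*\sO_\P(-1)$ (the fiber at $(y_1,y_3)$ being the line through them). The inclusion $\Delta_{12}\inj W$ factors as the closed embedding $X\times X\backslash\Delta_X\inj\P\times\P\backslash\Delta_\P$ followed by the section $\sigma$ of $\pi$ selecting $y_2=y_1$. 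Since $\sigma$ cuts out the summand $\pi_1^*\sO(-1)\subset S$, the relative Euler sequence yields $\sigma^*T_\pi=\pi_1^*\sO(1)\otimes\pi_2^*\sO(-1)$; combining this with the normal bundle of $X\times X$ in $\P\times\P$ produces the short exact sequence
\begin{equation*}
0\to \pr_1^*E|_X\oplus\pr_2^*E|_X \to N_{\Delta_{12}/W}\to \pr_1^*\sO_X(1)\otimes\pr_2^*\sO_X(-1)\to 0
\end{equation*}
on $\Delta_{12}\backslash V$. Geometrically, the quotient line bundle is $T_x\ell_{xx'}$, the bundle of tangent directions to the line joining $x$ and $x'$.

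The last step is to identify the natural map $N_{\Delta_{12}/W}\to i'^*N_{Y/P}$ and extract its cokernel. At a point $(x,x,x')$ with $x\ne x'$, the infinitesimal collinearity condition gives $T_W=\{(v_1,v_2,v_3)\in T_x\P\oplus T_x\P\oplus T_{x'}\P\mid v_2-v_1\in T_x\ell_{xx'}\}$, so a class $(\bar v_1,s,\bar v_3)\in N_{\Delta_{12}/W}$ is sent to $(\bar v_1,\bar v_1+\bar s,\bar v_3)\in E_x\oplus E_x\oplus E_{x'}$, where $\bar s$ denotes the image of $s\in T_x\ell_{xx'}$ in $E_x=T_x\P/T_xX$. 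Composing with the automorphism $(a,b,c)\mapsto(a,b-a,c)$ of $i'^*N_{Y/P}$ turns this into the direct sum of the identities on $\pr_1^*E|_X$ and $\pr_2^*E|_X$ with the natural map $\pr_1^*\sO_X(1)\otimes\pr_2^*\sO_X(-1)\to\pr_1^*E|_X$. On $\Delta_{12}\backslash V$ the line $\ell_{xx'}$ is, by definition of $V$, not tangent to $X$ at $x$, so this last map is a fibrewise injection; its cokernel is the claimed vector bundle $\pr_1^*E|_X/\bigl(\pr_1^*\sO_X(1)\otimes\pr_2^*\sO_X(-1)\bigr)$ of rank $r-1$. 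The main technical point is to write down $T_W$ carefully at the locus where two coordinates coincide so as to correctly track the inclusion $T_W\inj T_P$; once this is done the rest of the computation is formal.
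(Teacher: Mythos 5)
Your proof is correct and follows essentially the same route as the paper: both arguments reduce to the fibrewise identification of the line-direction bundle $T_x\P^1_{xx'}\cong\pr_1^*\sO_X(1)\otimes\pr_2^*\sO_X(-1)$ inside $\pr_1^*E|_X=\pr_1^*(T\P/TX)|_X$ and use that off $V$ the line is not tangent to $X$ at $x$. The only cosmetic difference is that you compute the excess bundle directly as the cokernel of $N_{\Delta_{12}/W}\to i'^*N_{X^3/\P^{\times 3}}$, whereas the paper packages the same tangent-space computation as the quotient $T_x\P/(T_x\P^1_{xx'}+T_xX)$.
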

\begin{proof}
For simplicity, assume $i=1,j=2$, and write the inclusion $j:
\Delta_{12}=X\times X\backslash\Delta_X\inj X^3\backslash\delta_X$,
which sends $(x,x')$ to $(x,x,x')$, where $x\neq x'$. Now we are in
the following situation:
\begin{displaymath}
  \xymatrix{
  X\times X\backslash\Delta_X \ar[r]^(.5){j}
  \ar[d] &X^3\backslash\delta_X\ar[dd]^{i_1}\\
  \P^{\times 2}\backslash\Delta_{\P} \ar[d]_{i_3'} \ar[dr]^{i_3''}\\
  W\ar[r]_{i_3} & \P^{\times 3}\backslash\delta_{\P}
  }
\end{displaymath}
The normal bundle of $j$ is obviously $\pr_1^*TX$. And the normal
bundle of $i_3$ sits in the exact sequence:
$$0\to N_{i_3'}\to N_{i_3''}\to N_{i_3}\to 0.$$
The normal bundle of $i_3''$ is $\pr_1^*T\P$. As for the normal
bundle of $i_3'$, let us reinterpret $i_3'$ as:
\begin{displaymath}
  \xymatrix{
  L\times_G L\backslash \Delta_L\ar[rr]^{i_3'} \ar[dr] & & L\times_G L\times_G L\backslash\delta_L \ar[dl]\\
  &G&
  }
\end{displaymath}
where $L\to G$ is the universal $\P^1$-fibration over the
Grassmannian of projective lines $G$. From this we see that the
normal bundle of $i_3'$ is the same as the quotient of the two
relative (over $G$) tangent sheaves, thus the fibre of $N_{i_3'}$ at
$(y,y')\in \P^{\times 2}\backslash\Delta_{\P}$ is canonically
isomorphic to $T_{y}\P^1_{yy'}$. Therefore at the point $(x,x')\in
X\times X\backslash\Delta_X$, the fibre of the excess normal bundle
is canonically isomorphic to
$$\frac{N_{i_3'',(x,x')}}{N_{i_3',(x,x')}+N_{j,(x,x')}}=\frac{T_x\P}{T_x\P^1_{xx'}+T_xX}.$$
As long as the line $\P^1_{xx'}$ is not tangent to $X$ at $x$, \ie
$(x,x')\notin V$, the sum in the denominator is a direct sum, and
the fibre of the excess bundle at this point is canonically
isomorphic to the $(r-1)$-dimensional vector space
$$\frac{N_{X/\P,x}}{T_x\P^1_{xx'}}=\frac{E_x}{\Hom_{\C}(\C
\dot{x},\C \dot{x'})},$$ where $\C\dot{x}$ is the 1-dimensional
sub-vector space corresponding to $x\in \P$. Therefore along
$\Delta_{ij}\backslash V$, the excess normal bundle is isomorphic to
$\frac{\pr_1^*E|_X}{\pr_1^*\sO_X(1)\otimes\pr_2^*\sO_X(-1)}$.
\end{proof}

Now we consider the Gysin map $i_1^!$ in the diagram (\ref{diagram})
to get the following.

\begin{prop}\label{basicequality}
There exists a symmetric homogeneous polynomial $P$ of degree $2n$
with integer coefficients, such that in
$\CH_n(X^3\backslash\delta_X)$,
\begin{equation} \label{basiceq}
c_{n-r+1}(F|_{V})+j_{12*}(\alpha)+j_{13*}(\alpha)+j_{23*}(\alpha)+P(h_1,h_2,h_3)=0,
\end{equation}
where $h_i=\pr_i^*(h)\in \CH^1(X^3\backslash\delta_X)$ with
$h=c_1(\sO_X(1))$, $i=1,2,3$; the cycle $\alpha$ is defined by
\begin{equation}\label{alpha}
\alpha=c_{n-r+1}\left(F|_{\Delta_{12}}\right)\cdot
c_{r-1}\left(\frac{\pr_1^*E|_X}{\pr_1^*\sO_X(1)\otimes\pr_2^*\sO_X(-1)}\right)
\in \CH_n(X^2\backslash\Delta_X);
\end{equation}
and the morphisms $j_{12}, j_{13}, j_{23}: X^2\backslash\Delta_X \inj X^3\backslash\delta_X$ are defined by
\begin{eqnarray*}
  j_{12}: &(x,x')&\mapsto  (x,x,x');\\
  j_{13}: &(x,x')&\mapsto  (x,x',x);\\
  j_{23}: &(x,x')&\mapsto  (x',x,x).\\
\end{eqnarray*}
\end{prop}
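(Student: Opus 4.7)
The plan is to compute the refined Gysin pullback
\begin{equation*}
\beta := i_1^!\bigl(i_{3*}\,c_{n-r+1}(F)\bigr)\;\in\;\CH_n(X^3\backslash\delta_X)
\end{equation*}
associated to the Cartesian square (\ref{diagram}) in two different ways and then compare. For the first computation, one applies the excess intersection formula (\cf \cite{MR1644323}, Chapter 6). By Lemma \ref{intersectionlemma} the scheme-theoretic intersection $W\cap(X^3\backslash\delta_X)$ is reduced, with four irreducible components: the transverse piece $V$ (of expected codimension $3r$) and the three excess pieces $\Delta_{12},\Delta_{13},\Delta_{23}$, each of excess dimension $r-1$. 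By Lemma \ref{excess} the excess normal bundle along each $\Delta_{ij}$ is identified, via $j_{ij}\colon X\times X\backslash \Delta_X\cong \Delta_{ij}$, with the common rank-$(r-1)$ bundle $N:=\pr_1^*E|_X/\bigl(\pr_1^*\sO_X(1)\otimes\pr_2^*\sO_X(-1)\bigr)$. Applying the excess formula component by component and using the $\mathfrak{S}_3$-equivariance of the construction of $F$ to identify the three big-diagonal contributions with the common class $\alpha$ of (\ref{alpha}), we obtain
\begin{equation*}
\beta = c_{n-r+1}(F|_V)+j_{12*}(\alpha)+j_{13*}(\alpha)+j_{23*}(\alpha).
\end{equation*}

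For the second computation, recall that $\CH^*(\P^{\times 3})$ is generated as a ring by the hyperplane classes $H_1,H_2,H_3$. Hence by the localization exact sequence
\begin{equation*}
\CH_*(\delta_{\P})\to \CH_*(\P^{\times 3})\to \CH_*(\P^{\times 3}\backslash\delta_{\P})\to 0
\end{equation*}
the class $i_{3*}c_{n-r+1}(F)$ on $\P^{\times 3}\backslash\delta_{\P}$ is the restriction of an integral polynomial, which for later convenience we write as $-P(H_1,H_2,H_3)$. Compatibility of the refined Gysin map with cap product by classes pulled back from the ambient variety then yields
\begin{equation*}
\beta = -P(h_1,h_2,h_3).
\end{equation*}
A dimension count forces the total degree of $P$ to be $2n$, the symmetry of $P$ comes from the $\mathfrak{S}_3$-equivariance of the whole set-up, and the integrality of $P$ is built in. Equating the two expressions for $\beta$ gives the claimed equality (\ref{basiceq}).

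The delicate point in this argument is the bookkeeping of the excess intersection formula when the four components of $W\cap(X^3\backslash\delta_X)$ may meet along boundary loci; for instance, $\partial V:=V\setminus V_o$ is contained in $\bigcup_{i<j}\Delta_{ij}$. We are saved by two geometric inputs already at our disposal: first, the intersection scheme is reduced with \emph{exactly} four irreducible components (Lemma \ref{intersectionlemma}), so no residual correction term is required; second, the excess bundle $N$ is a genuine vector bundle globally defined on each $\Delta_{ij}$ (Lemma \ref{excess}), so the excess formula applies verbatim on each component separately and cleanly assembles into the four terms above.
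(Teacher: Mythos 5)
Your argument is correct and is essentially the paper's own proof: both compute $i_1^!\bigl(i_{3*}c_{n-r+1}(F)\bigr)$ on one side via the commutativity of the refined Gysin map with proper push-forward together with the excess intersection formula applied component by component (using Lemmas \ref{intersectionlemma} and \ref{excess}), and on the other side via the polynomial description of $\CH^*(\P^{\times 3})$, then equate. The one point worth tightening is your justification that the component-by-component excess computation needs no correction: reducedness alone is not quite the reason — the paper's argument is the dimension count $\dim(V\cap\Delta_{ij})<2n-r+1$, which forces each component's contribution to $i_1^!([W])$ to be determined by its restriction to the open locus $\Delta_{ij}\backslash V$ where the excess formula applies verbatim.
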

\begin{proof}
  By the commutativity of Gysin map and push-forwards (\cite{MR1644323} Theorem 6.2(a)):
\begin{equation}\label{equation1}
i_{2*}\left(i_1^{!}c_{n-r+1}(F)\right)=i_1^{!}\bigg(i_{3*}c_{n-r+1}(F)\bigg)
~~\text{in}~ \CH_n(X^3\backslash\delta_X).
\end{equation}
While in the right hand side, $i_{3*}c_{n-r+1}(F)\in
\CH_{n+3r}(\P^{\times 3}\backslash\delta_{\P})\isom
\CH_{n+3r}(\P^{\times 3})$, and the Chow ring of $\P^{\times 3}$ is
well-known: $$\CH^*(\P^{\times 3})=\Z[H_1,H_2,H_3]/(H_i^{n+r+1};
i=1,2,3),$$ where $H_i=\pr_i^*(H)$ with $H\in \CH^1(\P)$ being the
hyperplane section class. Hence there exists a symmetric homogeneous
polynomial $P$ of degree $2n$ with integer coefficients, such that
$$i_{3*}c_{n-r+1}(F)=-P(H_1, H_2, H_3)  ~~\text{in}~
\CH_{n+3r}(\P^{\times 3}\backslash\delta_{\P}).$$ Combining this
with (\ref{equation1}), and denoting $h_i=H_i|_{X^3}\in \CH^1(X^3)$,
we obtain the following equality
\begin{equation}\label{equation2}
 i_{2*}\left(i_1^{!}c_{n-r+1}(F)\right)+P(h_1, h_2, h_3)=0  ~~\text{in}~ \CH_{n}(X^3\backslash\delta_X).
\end{equation}
In the left hand side, by \cite{MR1644323} Proposition 6.3, we have:
\begin{equation}\label{equation3}
i_1^{!}c_{n-r+1}(F)=i_1^{!}\left(c_{n-r+1}(F)\cdot
[W]\right)=c_{n-r+1}(F|_{V\cup\Delta_{12}\cup\Delta_{13}\cup\Delta_{23}})\cdot
i_1^!([W]),
\end{equation}
where $[W]$ is the fundamental class of $W$. Note here $i_1^!([W])$
is a $(2n-r+1)$-dimensional cycle, but $V\cap \Delta_{ij}$ is of
dimension strictly less than $2n-r+1$, thus we can use the excess
intersection formula (\cite{MR1644323} \S 6.3) component by
component in the open subsets $\Delta_{ij}\backslash V$ to get (the
excess normal bundle is given in Lemma \ref{excess}):
$$i_1^!([W])=[V]+\sum_{1\leq i<j\leq 3}[\Delta_{ij}]\cdot
c_{r-1}\left(\frac{\pr_1^*E|_X}{\pr_1^*\sO_X(1)\otimes\pr_2^*\sO_X(-1)}\right).$$
Therefore, by omitting all the push-forwards induced by inclusions
of subvarieties of $X^3\backslash\delta_X$,
$$i_{2*}\left(c_{n-r+1}(F|_V)\cdot i_1^!([W])\right)=c_{n-r+1}(F|_V);$$
$$i_{2*}\left(c_{n-r+1}(F|_{\Delta_{12}})\cdot i_1^!([W])\right)=c_{n-r+1}(F|_{\Delta_{12}})\cdot c_{r-1}\left(\frac{\pr_1^*E|_X}{\pr_1^*\sO_X(1)\otimes\pr_2^*\sO_X(-1)}\right),$$
putting these in (\ref{equation2}) and (\ref{equation3}) we get the
desired formula.
\end{proof}

Let us now deal with the equality (\ref{basiceq}) term by term. Firstly,  by Lemma \ref{gamma2}, $c_{n-r+1}(F|_V)=\Gamma_o$ in $\CH_n(X^3\backslash\delta_X)$. Secondly, we would like to calculate $F|_{\Delta_{12}}$ of
Proposition \ref{basicequality}. We remark that this bundle is the
pull-back by the inclusion $X^2\backslash\Delta_X\inj \P^{\times
2}\backslash\Delta_{\P}$ of the bundle $$M:= F|_{\Delta_{12,\P}}$$
where $\Delta_{12,\P}=\left\{(y,y,y')\in \P^{\times 3}~|~~ y\neq
y'\right\}\subset W$, and we identify $\Delta_{12,\P}$ with
$\P^{\times 2}\backslash\Delta_{\P}$.

We still use $S$ to denote the tautological rank 2 vector bundle on
$\P^{\times 2}\backslash\Delta_{\P}$, hence $p: \P(S)\to \P^{\times
2}\backslash\Delta_{\P}$ is the universal line, which admits two
tautological sections $\sigma,\sigma'$, and we call $q: \P(S)\to \P$
the natural morphism:
\begin{equation}\label{diagram2}
 \xymatrix{
\P(S) \ar[r]^{q} \ar[d]^{p} & \P\\
\P^{\times 2}\backslash\Delta_{\P}\ar@/^/[u]<3ex>|{\sigma}
\ar@/^/[u]<1ex>|{\sigma'} }
\end{equation}
\begin{lemma}\label{M}
Notations as in the diagram (\ref{diagram2}) above, then $M\isom
\left(\sO_{\P}(1)\boxtimes\sO_{\P}(2)\right)\otimes
p_*\left(q^*E(-3)\right).$
\end{lemma}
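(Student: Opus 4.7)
\medskip

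\noindent\textbf{Proof proposal.} The plan is a direct computation with line bundles on $\P(S)$. First, I would unpack what $M$ actually is. Over $\Delta_{12,\P}\simeq\P^{\times2}\setminus\Delta_{\P}$, the three tautological sections of $\P(S)\to W$ specialize to only two sections: the first two, $D_1$ and $D_2$, coincide with the section $\sigma$ picking out the doubled point $y$, while $D_3$ becomes the section $\sigma'$ picking out $y'$. So $F$ restricts to
\begin{equation*}
M=p_{*}\bigl(q^{*}E\otimes\sO_{\P(S)}(-2\sigma-\sigma')\bigr).
\end{equation*}
The task is therefore to rewrite $\sO_{\P(S)}(-2\sigma-\sigma')$ in a form where the projection formula can be applied cleanly.

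The key identification is $\sO_{\P(S)}(1)=q^{*}\sO_{\P}(1)$, which holds because $S$ is by definition a rank-$2$ subbundle of the trivial bundle $\sO^{n+r+1}$ and $q:\P(S)\to\P$ is the projectivization of this inclusion. Next, I would pin down the two sections. Over $(y,y')\in\P^{\times2}\setminus\Delta_{\P}$, the $2$-plane $S_{(y,y')}$ splits canonically as a direct sum of the two lines corresponding to $y$ and $y'$; globally this gives
\begin{equation*}
S\isom \pr_{1}^{*}\sO_{\P}(-1)\oplus \pr_{2}^{*}\sO_{\P}(-1),
\end{equation*}
with $\sigma$ (resp.\ $\sigma'$) corresponding to the first (resp.\ second) summand. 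In particular, $\sigma^{*}\sO_{\P(S)}(1)=\pr_{1}^{*}\sO_{\P}(1)$ and the normal bundle of $\sigma$ in $\P(S)$ is
\begin{equation*}
N_{\sigma/\P(S)}\isom \pr_{1}^{*}\sO_{\P}(1)\otimes \pr_{2}^{*}\sO_{\P}(-1),
\end{equation*}
computed as $L_{y}^{\vee}\otimes L_{y'}$ via the tangent space $T_{y}\P^{1}_{yy'}$.

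Now write $\sO_{\P(S)}(\sigma)=\sO_{\P(S)}(1)\otimes p^{*}A$ for some line bundle $A$ on the base, which is legitimate since both line bundles have degree $1$ on every fibre of $p$. Restricting to $\sigma$ and using the adjunction formula $\sO_{\P(S)}(\sigma)|_{\sigma}=N_{\sigma/\P(S)}$ forces $A=\pr_{2}^{*}\sO_{\P}(-1)$. By the symmetric argument $\sO_{\P(S)}(\sigma')=\sO_{\P(S)}(1)\otimes p^{*}\pr_{1}^{*}\sO_{\P}(-1)$. Adding,
\begin{equation*}
\sO_{\P(S)}(2\sigma+\sigma')=\sO_{\P(S)}(3)\otimes p^{*}\bigl(\sO_{\P}(-1)\boxtimes\sO_{\P}(-2)\bigr).
\end{equation*}

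Finally, I tensor with $q^{*}E$ and push forward by $p$. Using $\sO_{\P(S)}(-3)=q^{*}\sO_{\P}(-3)$ and the projection formula, the base factor pulls out and yields
\begin{equation*}
M=\bigl(\sO_{\P}(1)\boxtimes\sO_{\P}(2)\bigr)\otimes p_{*}\bigl(q^{*}E(-3)\bigr),
\end{equation*}
as claimed. No estimates or moving lemmas are needed here, so the only real obstacle is being careful with the $\P(S)$-conventions (subspaces versus quotients) so that the signs in $N_{\sigma/\P(S)}$ come out right; once that is settled, everything is formal.
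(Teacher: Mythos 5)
Your proposal is correct and follows essentially the same route as the paper: both arguments write $\sO_{\P(S)}(\sigma)$ as $\sO_{\P(S)}(1)$ twisted by a line bundle pulled back from $\P^{\times 2}\backslash\Delta_{\P}$, determine that twist, and conclude by the projection formula using $\sO_{\P(S)}(1)=q^{*}\sO_{\P}(1)$. The only (harmless) difference is in how the twist is pinned down: you use adjunction together with the normal bundle $N_{\sigma/\P(S)}\isom\pr_1^{*}\sO_{\P}(1)\otimes\pr_2^{*}\sO_{\P}(-1)$ of the section, whereas the paper pushes forward the exact sequence $0\to\sO_{\P(S)}(-D)(1)\to\sO_{\P(S)}(1)\to\sO_{\P(S)}(1)|_{D}\to0$ and compares with $S\dual=\pr_1^{*}\sO_{\P}(1)\oplus\pr_2^{*}\sO_{\P}(1)$; both yield the same answer.
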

\begin{proof}
By construction (or see (\ref{F})),
\begin{equation}\label{equation4}
M=p_*\left(q^*E\otimes \sO_{\P(S)}(-2D-D')\right),
\end{equation}where $D$,
$D'$ is the images of the sections $\sigma, \sigma'$. Since $p$ is a
projective bundle and the intersection number of $D$ with the fibre
is 1, we can assume that
$\sO_{\P(S)}(-D)=p^*\left(\sO_{\P}(a)\boxtimes\sO_{\P}(b)\right)\otimes
\sO_{\P(S)}(-1)$. Pushing forward by $p_*$ the exact sequence $$0\to
\sO_{\P(S)}(-D)\otimes\sO_{\P(S)}(1)\to \sO_{\P(S)}(1) \to
\sO_{\P(S)}(1)|_D\to 0,$$ we find an exact sequence:
$$0\to \sO_{\P}(a)\boxtimes\sO_{\P}(b)\to S\dual\to \sO_{\P}(1)\boxtimes\sO_{\P}\to 0,$$
where the last term comes from the fact that
$p_*\left(\sO_{\P(S)}(1)|_D\right)=\sigma^*(\sO_{\P(S)}(1))$ whose
fibre at $(y,y')$ is $(\C\dot{y})^*$. Now noting
$S\dual=\pr_1^*\sO_{\P}(1)\oplus \pr_2^*\sO_{\P}(1)$, and
restricting to $\P\times\{\pt\}$ and $\{\pt\}\times \P$, we get
$a=0, b=1$, \ie
$$\sO_{\P(S)}(-D)=p^*\left(\pr_2^*\sO_{\P}(1)\right)\otimes
\sO_{\P(S)}(-1).$$ Similarly,
$\sO_{\P(S)}(-D')=p^*\left(\pr_1^*\sO_{\P}(1)\right)\otimes
\sO_{\P(S)}(-1)$. Putting these into (\ref{equation4}), the
projection formula finishes the proof of Lemma.
\end{proof}

Combining Proposition \ref{basicequality}, Lemma \ref{gamma2} and
Lemma \ref{M}, we have

\begin{prop}\label{predecompProp}
In $\CH_n(X^3\backslash\delta_X)$, we have
\begin{equation}\label{predecomp}
\Gamma_o+j_{12*}(Q(h_1,h_2))+j_{13*}(Q(h_1,h_2))+j_{23*}(Q(h_1,h_2))+P(h_1,h_2,h_3)=0,
\end{equation}
where $P$ is a symmetric homogeneous polynomial of degree $2n$ in
three variables with integer coefficients; $h_i\in
\CH^1(X^3\backslash\delta_X)$ or $\CH^1(X^2\backslash\Delta_X)$ is
the pull-back of $h=c_1(\sO_X(1))$ by the $i^ \text{th}$ projection;
the inclusions $j_{12}, j_{13}, j_{23}: X^2\backslash\Delta_X \inj
X^3\backslash\delta_X$ are defined as in Proposition
\ref{basicequality}; and $Q$ is a homogeneous polynomial of degree
$n$ in two variables with integer coefficients determined
by\footnote{Here
$\frac{\pr_1^*E}{\pr_1^*\sO_{\P}(1)\otimes\pr_2^*\sO_{\P}(-1)}$ is
no more a quotient vector bundle, but only an element in the
Grothendieck group of vector bundles on $\P^{\times 2}$ on which the
Chern classes are however still well-defined.}:
\begin{equation}\label{Q}
Q(H_1,H_2)=c_{n-r+1}(M)\cdot
c_{r-1}\left(\frac{\pr_1^*E}{\pr_1^*\sO_{\P}(1)\otimes\pr_2^*\sO_{\P}(-1)}\right)
\in \CH^n(\P^{\times 2}\backslash\Delta_{\P})\isom \CH^n(\P^{\times
2});
\end{equation}
where $M\isom \left(\sO_{\P}(1)\boxtimes\sO_{\P}(2)\right)\otimes
p_*\left(q^*E(-3)\right)$ as in Lemma \ref{M}.
\end{prop}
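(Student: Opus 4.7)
\textbf{Proof plan for Proposition \ref{predecompProp}.}
My plan is to assemble the three ingredients already established: the refined Gysin identity of Proposition \ref{basicequality}, the identification $c_{n-r+1}(F|_V) = \Gamma_o$ from Lemma \ref{gamma2}, and the explicit form of $M = F|_{\Delta_{12,\P}}$ from Lemma \ref{M}. The only real content left is to rewrite the cycle $\alpha$ of (\ref{alpha}) as a polynomial $Q(h_1,h_2)$ pulled back from the ambient projective space, and to show that $Q$ can in fact be computed on $\P^{\times 2}$ by the formula (\ref{Q}).

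First I would substitute $c_{n-r+1}(F|_V) = \Gamma_o$ into (\ref{basiceq}), so that the desired identity (\ref{predecomp}) reduces to showing that $\alpha = Q(h_1,h_2)$ in $\CH^n(X^2 \setminus \Delta_X)$, where $Q$ is the polynomial defined by (\ref{Q}). For this I exploit the cartesian square obtained by restricting the square of Lemma \ref{intersectionlemma} to the component $\Delta_{12}$: the inclusion $X^2 \setminus \Delta_X \hookrightarrow \P^{\times 2} \setminus \Delta_{\P}$ pulls back $M = F|_{\Delta_{12,\P}}$ to $F|_{\Delta_{12}}$, so by functoriality of Chern classes
\[
c_{n-r+1}(F|_{\Delta_{12}}) = c_{n-r+1}(M)|_{X^2 \setminus \Delta_X}.
\]
The same pullback identifies the genuine rank-$(r-1)$ quotient bundle $\pr_1^*E|_X/(\pr_1^*\sO_X(1)\otimes\pr_2^*\sO_X(-1))$ appearing in (\ref{alpha}) with the restriction of the virtual class $[\pr_1^*E] - [\pr_1^*\sO_\P(1)\otimes\pr_2^*\sO_\P(-1)]$ in the Grothendieck group of $\P^{\times 2}$, whose Chern classes are well-defined even though the virtual bundle need not be a genuine vector bundle globally on $\P^{\times 2}$. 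Hence $\alpha$ is the pullback of the class displayed in (\ref{Q}).

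Next I would observe that removing the diagonal $\Delta_\P \subset \P^{\times 2}$, which has codimension $n+r > n$, does not affect Chow groups in codimension $\leq n$: the localization exact sequence shows $\CH^n(\P^{\times 2}) \xrightarrow{\sim} \CH^n(\P^{\times 2} \setminus \Delta_\P)$. Since $\CH^*(\P^{\times 2}) = \Z[H_1,H_2]/(H_i^{n+r+1})$, the class in (\ref{Q}) is uniquely representable by a polynomial $Q \in \Z[H_1,H_2]$ of degree $n$; pulling back replaces $H_i$ by $h_i$ and yields $\alpha = Q(h_1,h_2)$. Plugging this back into Proposition \ref{basicequality} gives (\ref{predecomp}).

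The only subtlety to watch for is that the ``quotient'' $\pr_1^*E/(\pr_1^*\sO_\P(1)\otimes\pr_2^*\sO_\P(-1))$ has no meaning as a sheaf on $\P^{\times 2}$: there is no natural map from the rank-one bundle into $\pr_1^*E$ globally on $\P^{\times 2}$. This is why the definition (\ref{Q}) must be read in K-theory, with Chern classes of a virtual bundle; verifying that the restriction of this K-theoretic Chern class to $X^2\setminus\Delta_X$ agrees with $c_{r-1}$ of the genuine rank-$(r-1)$ quotient of Lemma \ref{excess} is the delicate bookkeeping step. Once this is checked, the rest of the proof is just multiplicativity and compatibility of Chern classes under pullback.
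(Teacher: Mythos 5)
Your proposal is correct and follows exactly the paper's route: the paper derives Proposition \ref{predecompProp} by combining Proposition \ref{basicequality}, Lemma \ref{gamma2} and Lemma \ref{M}, with the identification $\alpha=Q(h_1,h_2)$ left implicit via the observation that both $F|_{\Delta_{12}}$ and the (virtual) excess quotient are pulled back from $\P^{\times 2}\backslash\Delta_{\P}$, whose $\CH^n$ agrees with that of $\P^{\times 2}$. Your write-up simply makes explicit the localization isomorphism and the K-theoretic reading of the quotient, both of which the paper also invokes (the latter in the footnote to (\ref{Q})).
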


By the localization exact sequence
$\CH_n(X)\lra{\delta_*}\CH_n(X^3)\to \CH_n(X^3\backslash\delta_X)\to
0$, we deduce from (\ref{predecomp}) the following decomposition of
small diagonal:

\begin{cor}\label{decomp}
Let $P$, $Q$ be the same polynomials as in Proposition \ref{predecompProp}.
Then there exists an integer $N$, such that we have a decomposition
of $N\cdot\delta_X$ in $\CH_n(X^3)$:
\begin{equation}\label{decompeqn}
N\cdot\delta_X=
\Gamma+j_{12*}(Q(h_1,h_2))+j_{13*}(Q(h_1,h_2))+j_{23*}(Q(h_1,h_2))+P(h_1,h_2,h_3),
\end{equation}
where we still denote by $j_{12}, j_{23}, j_{13}: X^2\inj X^3$ the
inclusions defined by the same formulae as before, and recall that
$\Gamma$ is the subvariety constructed in (\ref{Gamma}), which is
the closure of $\Gamma_o$ in $X^3$.
\end{cor}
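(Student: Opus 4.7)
The strategy is a direct application of the localization exact sequence for Chow groups, leveraging Proposition \ref{predecompProp} as the input.

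First, I would write down the localization exact sequence associated to the closed embedding $\delta:X\inj X^3$ and its open complement:
\begin{equation*}
\CH_n(X)\xrightarrow{\delta_*}\CH_n(X^3)\xrightarrow{j^*}\CH_n(X^3\backslash\delta_X)\longrightarrow 0.
\end{equation*}
Since $X$ is irreducible of dimension $n$, the Chow group $\CH_n(X)$ is simply $\Z\cdot[X]$, and $\delta_*[X]=\delta_X$. Consequently, the kernel of $j^*$ is precisely the subgroup $\Z\cdot\delta_X\subset\CH_n(X^3)$.

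Next, I would observe that each term appearing on the right-hand side of (\ref{decompeqn}) is the natural extension to $X^3$ of the corresponding term in the identity (\ref{predecomp}) on $X^3\backslash\delta_X$. Concretely: the subvariety $\Gamma\subset X^3$ defined in (\ref{Gamma}) is closed and its restriction to $X^3\backslash\delta_X$ is exactly $\Gamma_o$; the inclusions $j_{12},j_{13},j_{23}: X^2\inj X^3$ extend the open inclusions $X^2\backslash\Delta_X\inj X^3\backslash\delta_X$, and the polynomial cycles $Q(h_1,h_2)\in\CH_n(X^2)$ restrict to the ones used in Proposition \ref{predecompProp}; finally, $P(h_1,h_2,h_3)\in\CH_n(X^3)$ restricts to the cycle on the open complement since the $h_i$ are pullbacks of the hyperplane class on $\P$.

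Applying $j^*$ to the cycle
\begin{equation*}
C:=\Gamma+j_{12*}(Q(h_1,h_2))+j_{13*}(Q(h_1,h_2))+j_{23*}(Q(h_1,h_2))+P(h_1,h_2,h_3)\in\CH_n(X^3)
\end{equation*}
yields zero by Proposition \ref{predecompProp}. By exactness of the localization sequence, $C$ lies in the image of $\delta_*$, hence there exists an integer $N\in\Z$ with $C=N\cdot\delta_X$ in $\CH_n(X^3)$. This is exactly (\ref{decompeqn}).

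There is no real obstacle at this stage: the entire content of the corollary is the combination of the open-complement identity with the fact that $\CH_n(X)$ is cyclic of rank one. The genuine geometric work was performed in Proposition \ref{basicequality} and Proposition \ref{predecompProp}. The only subtlety worth flagging is that at this point the integer $N$ is merely some integer — it could \emph{a priori} be zero, in which case the decomposition would be vacuous. Showing $N\neq 0$, and indeed pinning down $N$ in terms of the coefficient $a_0$ of $Q$ (as promised in the main Theorem \ref{main1}), will require a further argument using a suitable cohomological or numerical invariant to detect the multiplicity of $\delta_X$; this refinement is not part of the present corollary but is exactly the point that becomes the main difficulty later in the paper.
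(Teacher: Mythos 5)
Your argument is exactly the paper's: Corollary \ref{decomp} is deduced directly from Proposition \ref{predecompProp} via the localization exact sequence $\CH_n(X)\lra{\delta_*}\CH_n(X^3)\to \CH_n(X^3\backslash\delta_X)\to 0$, with $\CH_n(X)=\Z\cdot[X]$ giving the integer $N$. Your closing remark is also on point — the identification $N=a_0\deg(X)$ and the non-vanishing of $a_0$ are indeed handled afterwards (Lemma \ref{coefflemma} and the splitting-case computation), not in this corollary.
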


In the equation of Corollary \ref{decomp} above, we make the
following observation of relations between $N$ and the coefficients
of $P$ and $Q$ by using the \emph{non-existence} of decomposition of
diagonal $\Delta_X\subset X\times X$ in the sense of Bloch-Srinivas
(see Theorem \ref{BS} of the introduction) for smooth projective
varieties with $H^{n,0}\neq 0$, for example varieties of Calabi-Yau
type.

The \emph{degree} of $X$ is given by the maximal self-intersection
number of the hyperplane section: $\deg(X)=\bigg(\underbrace{h\cdot
h\cdots h}_n\bigg)_X$, which is in fact the top Chern number of $E$.

\begin{lemma}\label{coefflemma}
   Write the $\Z$-coefficient polynomials $$P(H_1, H_2, H_3)=\sum_{\substack{i+j+k=2n\\i,j,k\geq 0}}b_{ijk}H_1^iH_2^jH_3^k ~~~\text{with}~ b_{ijk}~ \text{symmetric on the indices};$$
   $$Q(H_1,H_2)=a_nH_1^n+a_{n-1}H_1^{n-1}H_2+\cdots+a_0H_2^n.$$
   Then we have
   \begin{equation}\label{coeff1}
       N=a_0\cdot\deg(X);
   \end{equation}
   \begin{equation}\label{coeff2}
       a_i+a_j=-b_{ijn}\cdot\deg(X)~~~\text{for any}~ i+j=n.
   \end{equation}
\end{lemma}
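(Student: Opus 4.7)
The plan is to push the equality of Corollary \ref{decomp} forward along $\pi := \pr_{12} \colon X^3 \to X^2$, and then extract the two relations: the first using the Calabi--Yau hypothesis, the second a linear independence argument in $\CH_n(X^2)$.

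First I would compute $\pi_*$ of each term of (\ref{decompeqn}). Clearly $\pi_* \delta_X = \Delta_X$, and $\pi_* \Gamma = 0$ by dimension count: $\Gamma$ is a $\P^1 \times \P^1 \times \P^1$-fibration over the $(n-3)$-dimensional variety $F(X)$, while its image under $\pi$ is the $(n-1)$-dimensional $\P^1 \times \P^1$-fibration. For the $j_{ij*}$ terms: $\pi \circ j_{13} = \mathrm{id}_{X^2}$, $\pi \circ j_{23}$ is the swap $(x, x') \mapsto (x', x)$, and $\pi \circ j_{12} = \Delta \circ g$, where $g$ is the first projection $X^2 \to X$. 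The projection formula combined with $g_*(h_2^k) = \deg(X) [X]$ if $k = n$ and $0$ otherwise isolates only the $a_0 h_2^n$-term of $Q$, giving $\pi_* j_{12*}(Q(h_1, h_2)) = a_0 \deg(X) \Delta_X$. An analogous computation based on $\pi_*(h_3^k)$ yields $\pi_* P(h_1, h_2, h_3) = \deg(X) \sum_{i + j = n} b_{ijn} h_1^i h_2^j$. Reindexing $Q(h_2, h_1) = \sum_{i + j = n} a_j h_1^i h_2^j$ and assembling everything, the identity of Corollary \ref{decomp} becomes in $\CH_n(X^2)$:
\begin{equation*}
\bigl(N - a_0 \deg(X)\bigr) \Delta_X = \sum_{i + j = n} \bigl[(a_i + a_j) + b_{ijn} \deg(X)\bigr] h_1^i h_2^j.
\end{equation*}

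For (\ref{coeff1}), I would test this cohomologically against the Künneth class $\pr_1^*[\omega] \cdot \pr_2^*[\bar{\omega}] \in H^{2n}(X^2, \C)$, where $\omega$ is a nonzero holomorphic $n$-form on $X$ (existing by the Calabi--Yau assumption). Integration over $X \times X$ sends the left-hand side to $\bigl(N - a_0 \deg(X)\bigr) \int_X \omega \wedge \bar{\omega}$, and $\int_X \omega \wedge \bar{\omega} \neq 0$. Each summand of the right-hand side contributes $c_{ij} \cdot \int_X h^i \omega \cdot \int_X h^j \bar{\omega}$, and each of these factors vanishes on Hodge-bidegree grounds: the form $h^i \omega$ has pure type $(n + i, i)$, which is never of type $(n, n)$ (indeed $h^i \omega = 0$ for $i \geq 1$ by dimensional reasons, and $\int_X \omega = 0$ since $\omega$ is of type $(n, 0)$). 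Hence $N = a_0 \deg(X)$, proving (\ref{coeff1}).

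For (\ref{coeff2}), substituting $N = a_0 \deg(X)$ back reduces the identity to $0 = \sum_{i + j = n} c_{ij} h_1^i h_2^j$ in $\CH_n(X^2)$, where $c_{ij} := (a_i + a_j) + b_{ijn} \deg(X)$. Intersecting both sides with $h_1^{n - i_0} h_2^{i_0}$ and taking degree on $X^2$, only the $i = i_0$ summand survives (contributing $c_{i_0, n - i_0} \cdot \deg(h_1^n h_2^n) = c_{i_0, n - i_0} \cdot \deg(X)^2$), so $c_{i_0, n - i_0} = 0$, which is exactly (\ref{coeff2}). The only conceptually nontrivial step is the Hodge-theoretic vanishing in the preceding paragraph: this is precisely where the Calabi--Yau hypothesis enters, reflecting the standard obstruction to a Bloch--Srinivas-type decomposition of $\Delta_X$ when $H^{n, 0}(X) \neq 0$; everything else is formal manipulation of the projection formula.
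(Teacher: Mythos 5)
Your proof is correct and follows essentially the same route as the paper: push forward along $\pr_{12}$, compute each term to isolate the coefficient of $\Delta_X$, and rule out a nontrivial Bloch--Srinivas-type decomposition of the diagonal using $H^{n,0}(X)\neq 0$. Your explicit pairing against $\pr_1^*[\omega]\cdot\pr_2^*[\bar\omega]$ is a concrete instance of the paper's coniveau/Hodge-structure argument, and your intersection with complementary monomials just verifies the linear independence of the $h_1^ih_2^j$ that the paper invokes directly.
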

\begin{proof}
Applying to the equation in Corollary \ref{decomp} the push-forward
induced by the projection to the first two factors $\pr_{12}:
X^3\surj X^2$, then
\begin{itemize}
\item $\pr_{12*}(N\cdot\delta_X)=N\cdot\Delta_X;$
\item $\pr_{12 *}(\Gamma)=0$, since $\Gamma$ has relative dimension 1
for $\pr_{12}$;
\item $\pr_{12*}\circ j_{12}\left(Q(h_1,h_2)\right)=\Delta_*\circ \pr_{1*}\left(Q(h_1,h_2)\right)=a_0\deg(X)\cdot\Delta_X;$
\item $\pr_{12*}\circ j_{13}\left(Q(h_1,h_2)\right)=\id_{X^2*}\left(Q(h_1,h_2)\right)=Q(h_1,h_2);$
\item $\pr_{12*}\circ
j_{23}\left(Q(h_1,h_2)\right)=\iota_{X^2*}\left(Q(h_1,h_2)\right)=Q(h_2,h_1)$,
where $\iota:X^2\to X^2$ is the the involution interchanging the two
coordinates;
\item $\pr_{12
*}\left(P(h_1,h_2,h_3)\right)=\deg(X)\cdot\sum_{i+j=n}b_{ijn}h_1^ih_2^j$.
\end{itemize}
Putting these together, we obtain that in $\CH_n(X^2)$,
\begin{equation}\label{AbsurdDecomp}
\left(N-a_0\cdot\deg(X)\right)\cdot\Delta_X=\sum_{i+j=n}(b_{ijn}\deg(X)+a_i+a_j)\cdot
h_1^ih_2^j.
\end{equation}
If $N-a_0\cdot\deg(X)\neq 0$, then (\ref{AbsurdDecomp}) gives a
nontrivial decomposition of diagonal $\Delta_X\in \CH_n(X^2)$ of
Bloch-Srinivas type, but this is impossible: we regard
(\ref{AbsurdDecomp}) as an equality of cohomological correspondences
from $H^n(X)$ to itself, then the left hand side acts by multiplying
a non-zero constant $\left(N-a_0\cdot\deg(X)\right)$, while the
action of the right hand side has image a sub-Hodge structure of
coniveau at least 1, which contradicts to the non-vanishing of
$H^{n,0}(X)$.\\
As a result, we have (\ref{coeff1}), and hence (\ref{coeff2}) since
$\left\{h_1^ih_2^j\right\}_{i+j=n}$ are linearly independent.
\end{proof}

Using Lemma \ref{coefflemma}, we obtain improved version of
Corollary \ref{decomp} as summarized in the following

\begin{thm}\label{main1}
  Let $\P, E, X$ be as in the basic setting. Then there is a
  homogeneous polynomial with $\Z$-coefficients
  $$Q(H_1,H_2)=a_nH_1^n+a_{n-1}H_1^{n-1}H_2+\cdots+a_0H_2^n,$$
  which is determined by (\ref{Q}), such that in $\CH_n(X^3)$
  \begin{equation}\label{main1eqn}
a_0\deg(X)\cdot\delta_X=
\Gamma+j_{12*}(Q(h_1,h_2))+j_{13*}(Q(h_1,h_2))+j_{23*}(Q(h_1,h_2))+P(h_1,h_2,h_3),
  \end{equation}
where $P$ is the symmetric  polynomial with $\Z$-coefficients $$P(H_1, H_2,
H_3)=\sum_{i+j+k=2n}b_{ijk}H_1^iH_2^jH_3^k,$$ with (\ref{coeff2}):
$a_i+a_j=-b_{ijn}\cdot\deg(X)$ for any $i+j=n$; and $h_i\in
\CH^1(X^3)$ or $\CH^1(X^2)$ is the pull-back of $h=c_1(\sO_X(1))$ by
the $i^{th}$ projection, and the inclusions of big diagonals are
given by:
\begin{eqnarray*}
  & X^2 &\inj X^3\\
  j_{12}: &(x,x')&\mapsto  (x,x,x')\\
  j_{13}: &(x,x')&\mapsto  (x,x',x)\\
  j_{23}: &(x,x')&\mapsto  (x',x,x).\\
\end{eqnarray*}
\end{thm}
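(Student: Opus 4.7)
The plan is essentially to assemble Corollary \ref{decomp} and Lemma \ref{coefflemma}, both of which are already in hand. Corollary \ref{decomp} supplies the decomposition
\begin{equation*}
N \cdot \delta_X = \Gamma + j_{12*}(Q(h_1,h_2)) + j_{13*}(Q(h_1,h_2)) + j_{23*}(Q(h_1,h_2)) + P(h_1,h_2,h_3)
\end{equation*}
in $\CH_n(X^3)$ for some positive integer $N$, with $P$ and $Q$ the specific integer polynomials coming from the Chow ring of $\P^{\times 3}$ and from formula (\ref{Q}) respectively. The content of Theorem \ref{main1} beyond Corollary \ref{decomp} is the identification $N = a_0 \deg(X)$ together with the relation $a_i + a_j = -b_{ijn} \deg(X)$ for $i+j=n$. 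Thus the task reduces entirely to pinning down $N$ and collecting the constraint on the coefficients of $P$.

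To do this I would push the identity of Corollary \ref{decomp} forward by the projection $\pr_{12} : X^3 \to X^2$ onto the first two factors, exactly as in Lemma \ref{coefflemma}. The left-hand side becomes $N \cdot \Delta_X$. On the right, the term $\pr_{12*}(\Gamma)$ vanishes because $\Gamma$ has positive relative dimension over $X^2$ (it lies over the diagonal of the first two factors as a family of lines); $\pr_{12*} j_{12*}(Q(h_1,h_2))$ contributes $a_0 \deg(X) \cdot \Delta_X$ via the projection formula, since $j_{12}$ collapses the first two factors to the diagonal and only the $H_2^n$-monomial of $Q$ survives after integrating $h_1^i h_2^j$ over the fibres of $\pr_1$; the $j_{13*}$ and $j_{23*}$ terms become $Q(h_1, h_2)$ and $Q(h_2,h_1)$ because $j_{13}$ and $j_{23}$ are isomorphisms onto their images in $X^2$; and $\pr_{12*} P(h_1,h_2,h_3) = \deg(X) \sum_{i+j=n} b_{ijn} h_1^i h_2^j$. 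Rearranging gives an identity in $\CH_n(X^2)$ of the form
\begin{equation*}
(N - a_0 \deg(X)) \cdot \Delta_X = \sum_{i+j=n} \bigl(b_{ijn} \deg(X) + a_i + a_j\bigr) h_1^i h_2^j.
\end{equation*}

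The key step is then to conclude from this that both sides vanish separately. If $N - a_0 \deg(X)$ were nonzero, the equation would furnish a nontrivial decomposition of the diagonal of Bloch--Srinivas type, with the entire right side being a $\Q$-combination of exterior products of cycles from $X$, hence supported on $X \times D$ for some proper subvariety $D$. Interpreting the identity as an equality of self-correspondences on $H^n(X,\Q)$, the left side acts as multiplication by a nonzero scalar while the right side factors through a Hodge substructure of $H^n(X,\Q)$ of coniveau at least $1$, which contradicts the non-vanishing of $H^{n,0}(X)$ guaranteed by the Calabi--Yau assumption $\det(E) = \sO_{\P}(n+r+1)$. Hence $N = a_0 \deg(X)$, and then the linear independence of $\{h_1^i h_2^j\}_{i+j=n}$ in $\CH_n(X^2)$ (modulo the diagonal, which has now been eliminated) forces the coefficient relation (\ref{coeff2}).

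The only genuine obstacle is this Hodge-theoretic rigidity argument; everything else is bookkeeping of pushforwards. Its subtlety lies in the fact that the decomposition emerging from the localization sequence is \emph{a priori} only determined up to an integer multiple, and one must argue on the cohomological/motivic side — rather than at the level of cycles — to promote the formal multiple $N$ into the explicit numerical value $a_0 \deg(X)$ dictated by the geometry of the polynomial $Q$.
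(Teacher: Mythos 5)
Your proposal is correct and follows the paper's own route exactly: the paper proves Theorem \ref{main1} precisely by combining Corollary \ref{decomp} with Lemma \ref{coefflemma}, whose proof is the $\pr_{12*}$ pushforward computation and the Bloch--Srinivas-type rigidity argument using $H^{n,0}(X)\neq 0$ that you reproduce. All the individual pushforward evaluations and the concluding coniveau argument match the paper's.
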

This theorem generalizes the result of Voisin \cite{VoiFamilyK3} for
Calabi-Yau hypersurfaces (see Theorem \ref{V} of the introduction)
except for a small point: to get a non-trivial decomposition of the
small diagonal and thus applications like Corollary \ref{VCor}, we
need to verify that $a_0\neq 0$. It is the case when $E$ is
splitting, \ie when $X$ is Calabi-Yau complete intersection, see the following subsections.

\subsection{Applications to the multiplicative structure of Chow rings}

Always in the same setting as the previous subsection. Now we can regard (\ref{main1eqn}) as an equality of correspondences
from $X\times X$ to $X$. Combining with (\ref{coeff2}), we
get the following corollary in the same spirit of Corollary
\ref{VCor} of the introduction.

\begin{cor}\label{corweak}
 In the same notation as before.
   Let $Z\in \CH^k(X), Z'\in \CH^l(X)$ be two algebraic cycles of
   $X$ of codimension $k,l\in \N$ with $k+l=n$. Then we have an equality in
   $\CH_0(X)$, here $\bullet$ means the intersection product in
   $\CH^*(X)$:
   $$a_0\deg(X)\cdot Z\bullet Z'=a_0\deg(Z\bullet Z')\cdot h^n +a_l\deg(Z')\cdot Z\bullet
   h^l+ a_k\deg(Z)\cdot Z'\bullet h^k- (a_k+a_l)\frac{\deg(Z)\deg(Z')}{\deg(X)}\cdot h^n,$$
   where the \emph{degree} of an algebraic cycle $\sZ$ is defined to be the intersection number $(\sZ\cdot h^{\dim
   \sZ})_X$.
\end{cor}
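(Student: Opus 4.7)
The plan is to read the identity (\ref{main1eqn}) of Theorem \ref{main1} as an equality of correspondences from $X \times X$ to $X$ and to evaluate both sides on $Z \times Z' \in \CH^n(X \times X)$. The left-hand side becomes $a_0\deg(X) \cdot \delta_{X*}(Z \times Z') = a_0 \deg(X) \cdot Z \bullet Z'$ since the small diagonal realises the intersection product. It remains to compute the correspondence action of each of the other four cycles appearing on the right-hand side of (\ref{main1eqn}); the stated identity will follow by adding the contributions.

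First I would show that $\Gamma_*(Z\times Z') = 0$. By Chow's moving lemma I may replace $Z, Z'$ by equivalent cycles meeting every line $\P^1_t \subset X$ properly, so that $Z \cap \P^1_t$ has codimension $k$ in $\P^1_t$ and $Z' \cap \P^1_t$ has codimension $l$ in $\P^1_t$. Since $k + l = n \geq 3$ one has $\max(k, l) \geq 2$, and therefore at least one of these two intersections is empty. As every point of $\Gamma$ has its three coordinates on a common line of $X$, this forces the set-theoretic intersection $\Gamma \cap (Z \times Z' \times X)$ to be empty, and hence $\Gamma_*(Z \times Z') = 0$ in $\CH_0(X)$.

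For the three diagonal terms I would use the projection formula. The factorisations $\pr_{12} \circ j_{12} = \Delta \circ \pr_1$ and $\pr_3 \circ j_{12} = \pr_2$ reduce the $j_{12}$-contribution to $\pr_{2*}(Q(h_1, h_2) \cdot \pr_1^*(Z \bullet Z'))$ on $X^2$; since $Z \bullet Z'$ is a $0$-cycle, every positive power of $h_1$ annihilates it, so only the pure $a_0 h_2^n$ monomial of $Q$ survives and the term equals $a_0 \deg(Z \bullet Z') \cdot h^n$. For $j_{13}$ one has $\pr_{12} \circ j_{13} = \id$ and $\pr_3 \circ j_{13} = \pr_1$, so the contribution is $\pr_{1*}(Q(h_1, h_2) \cdot (Z \times Z'))$; the push-forward $\pr_{1*}\pr_2^*(h^j Z')$ vanishes unless $j = k$, in which case it equals $\deg(Z') \cdot [X]$, so only the $a_l h_1^l h_2^k$ monomial of $Q$ contributes, producing $a_l \deg(Z') \cdot Z \bullet h^l$. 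The $j_{23}$-term is obtained from the symmetric computation, pulling back $Z \times Z'$ by the swap involution, and yields $a_k \deg(Z) \cdot Z' \bullet h^k$.

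Finally, the same dimension count shows that $\pr_{3*}(\pr_1^*(h^i Z) \cdot \pr_2^*(h^j Z'))$ is non-zero only when $h^i Z$ and $h^j Z'$ are both $0$-cycles, which forces $i = l$, $j = k$, and consequently the third exponent $k_3 = 2n - i - j = n$. The unique surviving monomial of $P(h_1, h_2, h_3)$ thus contributes $b_{l, k, n} \deg(Z) \deg(Z') \cdot h^n$, and the relation $a_l + a_k = -b_{l, k, n} \deg(X)$ from (\ref{coeff2}) converts this into $-(a_k + a_l) \frac{\deg(Z) \deg(Z')}{\deg(X)} \cdot h^n$. Summing the four surviving contributions produces exactly the stated formula. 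The main obstacle is the vanishing of the $\Gamma$-contribution, which depends crucially on the dimensional inequality $\max(k, l) \geq 2$ granted by the blanket hypothesis $n \geq 3$.
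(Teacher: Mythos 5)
Your overall strategy is exactly the paper's: read (\ref{main1eqn}) as a correspondence from $X\times X$ to $X$, evaluate on $Z\times Z'$, and collect the surviving monomials using $a_l+a_k=-b_{l,k,n}\deg(X)$. Your computations of the $j_{12}$, $j_{13}$, $j_{23}$ and $P$ contributions are correct and coincide with the paper's term-by-term calculation.

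The gap is in your treatment of $\Gamma_*(Z\times Z')$. Chow's moving lemma lets you move $Z$ to meet a \emph{fixed finite collection} of subvarieties properly; it does not let you arrange proper intersection with every member of the positive-dimensional family $\{\P^1_t\}_{t\in F(X)}$ simultaneously (recall $\dim F(X)=n-3$, so the family is infinite as soon as $n\geq 4$). Indeed, for $k\leq n-2$ the locus of $t\in F(X)$ with $\P^1_t\cap Z\neq\varnothing$ has expected dimension $n-2-k\geq 0$ and cannot be emptied by moving $Z$; whether $Z'$ then also meets one of those lines is a general-position question about the pair $(Z,Z')$ relative to the family, which the moving lemma does not control. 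So your claim that the set-theoretic intersection $\Gamma\cap(Z\times Z'\times X)$ is empty is unjustified, and even a \emph{proper} (zero-dimensional) intersection would a priori push forward to a nonzero $0$-cycle. The correct argument, which is the one the paper uses, is a dimension count exploiting that $\Gamma$ fibres over $F(X)$ with fibres $\P^1_t\times\P^1_t\times\P^1_t$: the correspondence $\Gamma$ factors through $F(X)$, so $\Gamma_*(Z\times Z')$ is the pushforward to $X$ of a cycle pulled back from $F(X)$ to the universal line, hence a linear combination of classes of subvarieties of dimension at least $1$; since it must lie in $\CH_0(X)$, it vanishes. With that replacement the rest of your proof is sound.
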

\begin{proof} Let us list term by term the results of applying the
correspondences in (\ref{main1eqn}) to the cycle $Z\times Z'\in
\CH^{n}(X\times X)$:
\begin{itemize}
  \item $\left(a_0\deg(X)\cdot\delta_X\right)_*(Z\times Z')=a_0\deg(X)\cdot Z\bullet Z';$
  \item $\Gamma_*(Z\times Z')=0$, since $\Gamma_*(Z\times Z')$ is represented
  by a linear combination of some fundamental class of a subvariety of dimension at least 1,
  but $\Gamma_*(Z\times Z')$ should be a zero-dimensional cycle, so it vanishes;
  \item $\left(j_{12*}(h_1^ih_2^{n-i})\right)_*(Z\times Z')=(Z\cdot Z'\cdot h^i)_X\cdot
  h^{n}$ if $i=0$, and vanishes otherwise, therefore
  $$\bigg(j_{12*}Q(h_1,h_2)\bigg)_*(Z\times Z')=a_0\deg(Z\bullet Z')\cdot
  h^{n};$$
  \item $\left(j_{13*}(h_1^ih_2^{n-i})\right)_*(Z\times Z')=(Z'\cdot h^{n-l})_X\cdot
  Z\bullet h^l$ if $i=l$, and vanishes otherwise, therefore
  $$\bigg(j_{13*}Q(h_1,h_2)\bigg)_*(Z\times Z')=a_l\deg(Z')\cdot
  Z\bullet h^l;$$
  \item $\left(j_{23*}(h_1^ih_2^{n-i})\right)_*(Z\times Z')=(Z\cdot h^{n-k})_X\cdot
  Z'\bullet h^k$ if $i=k$, and vanishes otherwise, therefore
  $$\bigg(j_{23*}Q(h_1,h_2)\bigg)_*(Z\times Z')=a_k\deg(Z)\cdot
  Z'\bullet h^k;$$
  \item $(h_1^{i_1}h_2^{i_2}h_3^{i_3})_*(Z\times Z')=(h^{l}\cdot Z)_X\cdot(h^{k}\cdot
  Z')_X\cdot h^{n}$ if $(i_1, i_2, i_3)=(l, k, n)$, and vanishes
  otherwise, therefore $$\bigg(P(h_1,h_2,h_3)\bigg)_*(Z\times Z')=b_{l,k,n}\deg(Z)\deg(Z')\cdot h^{n}.$$
\end{itemize}
Putting all these together, we deduce exactly what we
want.
\end{proof}

%
%
%
%


To simplify further the equality in Corollary \ref{corweak}, we need:
\begin{lemma}\label{laststep}
  Suppose $a_1\neq a_0$. Let $k\in \{0,1,\cdots,n-1\}$.
  Then for any $\sZ\in \CH^k(X)_{\Q}$, we have $\sZ\bullet h^{n-k}$
  is always proportional to $h^n$ in $\CH_0(X)_{\Q}$.
\end{lemma}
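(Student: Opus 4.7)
The plan is to apply Corollary \ref{corweak} to a single, carefully chosen pair of cycles, and to use the hypothesis $a_0 \neq a_1$ to cancel an otherwise obstructive coefficient. Given $\sZ \in \CH^k(X)_{\Q}$ with $0 \leq k \leq n-1$, set $\alpha := \sZ \bullet h^{n-k} \in \CH_0(X)_{\Q}$; the goal is to show that $\alpha$ lies in $\Q \cdot h^n$. The key observation is that if the cycles $Z$ and $Z'$ fed into Corollary \ref{corweak} are arranged so that both $Z \bullet Z'$ and the mixed term $Z \bullet h^l$ recover the same $0$-cycle $\alpha$, then the formula of that corollary collapses into an identity involving only $\alpha$ and $h^n$.

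Concretely, I would invoke Corollary \ref{corweak} with the codimensions $k = n-1$, $l = 1$ and with the cycles $Z := \sZ \cdot h^{n-k-1}$ and $Z' := h$. A short bookkeeping of degrees and intersection products gives $Z \bullet Z' = \alpha$, $Z \bullet h^{l} = \alpha$, $Z' \bullet h^{k} = h^n$, $\deg(Z) = (\sZ \cdot h^{n-k})_X = \deg(\alpha)$, and $\deg(Z') = \deg(X)$. Substituting these into the formula of Corollary \ref{corweak}, the two summands carrying the coefficient $a_{n-1}$ cancel against each other, and the identity reduces to
\begin{equation*}
(a_0 - a_1)\,\deg(X) \cdot \alpha \;=\; (a_0 - a_1)\,\deg(\alpha) \cdot h^n
\end{equation*}
in $\CH_0(X)_{\Q}$. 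The hypothesis $a_0 \neq a_1$ then allows us to divide through by $a_0 - a_1$ and conclude that $\alpha = \tfrac{\deg(\alpha)}{\deg(X)}\, h^n$, which is the desired proportionality.

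No serious difficulty is expected in this argument: everything reduces to elementary arithmetic with Chow-theoretic intersection numbers, and the content of Corollary \ref{corweak} does all the heavy lifting. The only genuinely substantive choice is the split of $Z$ and $Z'$, and this is where one has to be careful. The naive choice $Z = \sZ$, $Z' = h^{n-k}$ would produce a coefficient $(a_0 - a_{n-k})$ whose non-vanishing is not granted by the standing hypothesis; loading all the extra powers of $h$ onto $Z$ and taking $Z' = h$ instead forces the decisive coefficient to be $(a_0 - a_1)$, which is nonzero by assumption.
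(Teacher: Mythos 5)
Your proposal is correct and is essentially identical to the paper's own proof: the paper likewise substitutes $Z=\sZ\bullet h^{n-k-1}$ and $Z'=h$ into Corollary \ref{corweak}, obtains $(a_0-a_1)\deg(X)\cdot\bigl(\sZ\bullet h^{n-k}-\tfrac{\deg(\sZ)}{\deg(X)}h^n\bigr)=0$, and divides by $(a_0-a_1)\deg(X)$. Your closing remark about why the naive split $Z=\sZ$, $Z'=h^{n-k}$ would fail is a useful observation but does not change the argument.
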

\begin{proof}
Given $\sZ\in \CH^k(X)_{\Q}$, replacing $Z$ by $\sZ\bullet h^{n-k-1}$ and $Z'$ by $h$ in the formula of Corollary \ref{corweak}, we get:
$$(a_0-a_1)\deg(X)\cdot \left(Z\bullet h^{n-k}-\frac{\deg(\sZ)}{\deg(X)}h^n\right)=0.$$
Since $a_0\neq a_1$, we can divide out $(a_0-a_1)\deg(X)$, obtaining
$Z\bullet h^{n-k}=\frac{\deg(Z)}{\deg(X)}h^n$.
\end{proof}

Inserting this lemma into the formula of Corollary \ref{corweak},
its last three terms simplify with each other, and we finally obtain
our main consequence of the decomposition theorem:
\begin{thm}\label{main2}
Let $E$ be a rank $r$ vector bundle on $\P^{n+r}$ satisfying the positivity condition $(*)$ as well as the Calabi-Yau condition: $\det(E) \isom \sO_{\P}(n+r+1)$. Let $a_i$ still be the coefficients determined by (\ref{Q}). Suppose $a_0\neq 0$ and $a_1\neq a_0$. Let $X$ be the (Calabi-Yau) zero locus of a general section of $E$. Then for any strictly positive
integers $k,l\in \N^*$, with $k+l=n$.
$$\im\left(\bullet: \CH^k(X)_{\Q}\times \CH^l(X)_{\Q}\to \CH_0(X)_{\Q}\right)=\Q\cdot h^n,$$
where $h=c_1(\sO_X(1))\in \CH^1(X)$.
\end{thm}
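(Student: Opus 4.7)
The plan is to combine Corollary \ref{corweak} with Lemma \ref{laststep} to eliminate all terms on the right-hand side except for a scalar multiple of $h^n$. The inclusion $\Q\cdot h^n\subseteq \im(\bullet)$ is automatic (take $Z=h^k$, $Z'=h^l$), so the real content is the reverse inclusion.

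First I would apply Lemma \ref{laststep} to the factors $Z\bullet h^l$ and $Z'\bullet h^k$ appearing in Corollary \ref{corweak}. Since $k,l\in\N^*$ satisfy $k,l\leq n-1$, the hypothesis $a_1\neq a_0$ of Lemma \ref{laststep} is applicable, yielding
\begin{equation*}
Z\bullet h^l=\frac{\deg(Z)}{\deg(X)}h^n,\qquad Z'\bullet h^k=\frac{\deg(Z')}{\deg(X)}h^n
\end{equation*}
in $\CH_0(X)_\Q$, using that $\deg(Z)=(Z\cdot h^l)_X$ and $\deg(Z')=(Z'\cdot h^k)_X$ by complementarity of codimensions.

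Next I would substitute these two expressions into the identity of Corollary \ref{corweak}. The last three terms of that identity then all become rational multiples of $h^n$, namely
\begin{equation*}
\left(a_l\frac{\deg(Z)\deg(Z')}{\deg(X)}+a_k\frac{\deg(Z)\deg(Z')}{\deg(X)}-(a_k+a_l)\frac{\deg(Z)\deg(Z')}{\deg(X)}\right)h^n,
\end{equation*}
and this expression is identically zero. What survives on the right-hand side is only $a_0\deg(Z\bullet Z')\cdot h^n$, giving
\begin{equation*}
a_0\deg(X)\cdot Z\bullet Z'=a_0\deg(Z\bullet Z')\cdot h^n.
\end{equation*}

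Finally, using the assumption $a_0\neq 0$, I would divide both sides by $a_0\deg(X)$ (which is a nonzero rational number) to conclude
\begin{equation*}
Z\bullet Z'=\frac{\deg(Z\bullet Z')}{\deg(X)}\cdot h^n\quad\text{in }\CH_0(X)_\Q,
\end{equation*}
which is exactly the claim. There is no real obstacle here once Corollary \ref{corweak} and Lemma \ref{laststep} are in hand; the whole argument is the observation that the three $\deg(Z)\deg(Z')$-terms in Corollary \ref{corweak} cancel precisely when one replaces the mixed intersections with $h^n$ via Lemma \ref{laststep}, and that $a_0\neq 0$ is exactly what is needed to solve for $Z\bullet Z'$. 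The nontrivial work has already been absorbed into the construction of the decomposition \eqref{main1eqn} and the verification of the numerical hypotheses $a_0\neq 0$, $a_1\neq a_0$.
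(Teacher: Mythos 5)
Your proposal is correct and is essentially identical to the paper's own proof: the paper likewise substitutes Lemma \ref{laststep} into the identity of Corollary \ref{corweak}, observes that the last three terms cancel, and divides by $a_0\deg(X)\neq 0$. Nothing to add.
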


\subsection{Splitting case: Calabi-Yau complete intersections}
In this subsection, we deal with the special case that $E$ is of
splitting type:
$$E= \bigoplus_{i=1}^r\sO_{\P}(d_i),$$ where $d_1\geq d_2\geq\cdots\geq d_r\geq
2$, with $d=\sum_{i=1}^rd_i=n+r+1$. Hence $X$ is a smooth Calabi-Yau
complete intersection of multidegree $(d_1,\cdots,d_r)$. In
particular, $\deg(X)=\prod_{i=1}^rd_i$.

We are going to prove that the conditions $a_0\neq 0$ and $a_1\neq a_0$ appeared in Theorem \ref{main2} are satisfied in this special case by the following calculations:

\begin{lemma}\label{spli}
  If $E$ is of splitting type as above, then
  \begin{enumerate}
    \item The vector bundle $M$ in Lemma \ref{M} is isomorphic to the restriction to $\P^{\times 2}\backslash\Delta_{\P}$ of
    $$\bigoplus_{i=1}^r\bigoplus_{j=1}^{d_i-2}\sO_{\P}(j)\boxtimes\sO_{\P}(d_i-j).$$
    In particular in (\ref{Q}),
    \begin{equation}\label{c(M)split}
c_{n-r+1}(M)=\prod_{i=1}^r\prod_{j=1}^{d_i-2}(jH_1+(d_i-j)H_2).
    \end{equation}
    \item In (\ref{Q}),
    $c_{r-1}\left(\frac{\pr_1^*E}{\pr_1^*\sO_{\P}(1)\otimes\pr_2^*\sO_{\P}(-1)}\right)$
    is the degree $(r-1)$ part of the formal series $$\frac{\prod_{i=1}^r(1+d_iH_1)}{1-(H_2-H_1)}.$$
    \item The coefficient of $H_2^n$ in the polynomial $Q$ is given
    by $a_0=\prod_{i=1}^r\left((d_i-1)!\right)$, which is non-zero in particular.
    \item The coefficient of $H_1H_2^{n-1}$ in the polynomial $Q$ is
    given by $$a_1=\left(\prod_{i=1}^r(d_i-1)!\right)\cdot\left(\left(\sum_{i=1}^r\sum_{j=1}^{d_i-2}\frac{j}{d_i-j}\right)+n+2\right),$$
    in particular $a_1\neq a_0$.
  \end{enumerate}
\end{lemma}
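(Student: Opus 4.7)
The plan is a direct computation that unwinds the definitions of $M$ and the virtual excess bundle in the split case, then simply extracts the coefficients of $H_2^n$ and $H_1 H_2^{n-1}$ in the polynomial $Q$ of Proposition \ref{predecompProp}. All four parts follow from elementary manipulations once part (1) is in place, so the main point is to identify $p_*q^*\sO_{\P}(m)$ concretely on $\P^{\times 2}\backslash\Delta_{\P}$.

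For part (1), I would start from the isomorphism $S\dual \isom \pr_1^*\sO_{\P}(1)\oplus \pr_2^*\sO_{\P}(1)$ already used inside the proof of Lemma \ref{M}, which comes from the canonical splitting $S_{(y,y')}=V_y\oplus V_{y'}$ of the fibre away from the diagonal. Since $q^*\sO_{\P}(1)=\sO_{\P(S)}(1)$, the projection formula gives $p_*(q^*\sO_{\P}(m))=\Sym^m S\dual=\bigoplus_{j=0}^m \sO_{\P}(j)\boxtimes\sO_{\P}(m-j)$ for $m\geq 0$. Applying this summand by summand to $E(-3)=\bigoplus_i \sO_{\P}(d_i-3)$ and then tensoring with $\sO_{\P}(1)\boxtimes\sO_{\P}(2)$ as in Lemma \ref{M}, a shift of indexing $k=j+1$ yields $M\isom \bigoplus_{i=1}^r\bigoplus_{k=1}^{d_i-2}\sO_{\P}(k)\boxtimes\sO_{\P}(d_i-k)$; the top Chern class formula (\ref{c(M)split}) is then Whitney's formula on a sum of line bundles.

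Part (2) is immediate: $c(\pr_1^*E)=\prod_i(1+d_iH_1)$ and $c_1(\pr_1^*\sO_{\P}(1)\otimes\pr_2^*\sO_{\P}(-1))=H_1-H_2$, so Whitney in $K$-theory gives the formal series $\prod_i(1+d_iH_1)/(1-(H_2-H_1))$, whose degree $r-1$ part equals the $(r-1)$-th Chern class of the virtual bundle. For part (3), substituting $H_1=0$ into the two factors of $Q$ is the fastest route: (\ref{c(M)split}) collapses to $\prod_i\prod_{k=1}^{d_i-2}(d_i-k)\cdot H_2^{n-r+1}=\prod_i(d_i-1)!\cdot H_2^{n-r+1}$, and the series of part (2) collapses to $1/(1-H_2)$ whose degree $r-1$ part is $H_2^{r-1}$; multiplying gives $a_0=\prod_i(d_i-1)!\neq 0$.

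Part (4) is the main computational step. Writing $Q=A\cdot B$ with $A=c_{n-r+1}(M)$ homogeneous of degree $n-r+1$ and $B=c_{r-1}(\cdots)$ homogeneous of degree $r-1$, by bidegree reasons
\begin{equation*}
a_1=[H_1H_2^{n-1}](AB)=[H_1H_2^{n-r}]A\cdot [H_2^{r-1}]B+[H_2^{n-r+1}]A\cdot [H_1H_2^{r-2}]B.
\end{equation*}
The coefficient $[H_1H_2^{n-r}]A$ is obtained by logarithmic differentiation of the product (\ref{c(M)split}) with respect to $H_1$ at $H_1=0$, giving $\prod_i(d_i-1)!\cdot\sum_{i,j}\frac{j}{d_i-j}$. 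For $[H_1H_2^{r-2}]B$, expanding $\prod_i(1+d_iH_1)\cdot\sum_k(H_2-H_1)^k$ and keeping only the two terms that contribute to bidegree $(1,r-2)$ (namely $k=r-2$ paired with the linear part $(\sum d_i)H_1=dH_1$, and $k=r-1$ paired with $1$) produces $d-(r-1)=n+2$ by the Calabi-Yau relation $d=n+r+1$. Combining the two contributions yields the stated formula for $a_1$. The inequality $a_1\neq a_0$ is then immediate, because the parenthesis $\sum_{i,j}\frac{j}{d_i-j}+n+2$ is already at least $n+2\geq 5$ under our standing hypothesis $n\geq 3$, so it cannot equal $1$. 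The only subtlety worth checking is the boundary case where some $d_i=2$ and the corresponding inner product over $k$ is empty, but this only removes non-negative terms and does not affect the strict inequality.
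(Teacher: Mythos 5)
Your proposal is correct and follows essentially the same route as the paper: identify $p_*q^*\sO_{\P}(m)=\Sym^m S\dual$ via $S\dual\isom\pr_1^*\sO_{\P}(1)\oplus\pr_2^*\sO_{\P}(1)$ to get the splitting of $M$, then read off coefficients of $Q$. The paper dismisses the $a_1$ computation as ``straightforward''; your bidegree decomposition $a_1=[H_1H_2^{n-r}]A\cdot[H_2^{r-1}]B+[H_2^{n-r+1}]A\cdot[H_1H_2^{r-2}]B$ and the identity $d-(r-1)=n+2$ supply exactly the missing details, and they check out.
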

\begin{proof}
    In the situation as in diagram (\ref{diagram2}),
    \begin{eqnarray*}
     p_*\left(q^*E(-3)\right)=&\oplus_{i=1}^rp_*q^*\sO_{\P}(d_i-3)
     &\\
     =&\oplus_{i=1}^rp_*\sO_{\P(S)}(d_i-3)& (\text{since}~ q^*\sO_{\P}(1)=\sO_{\P(S)}(1))\\
     =&\oplus_{i=1}^r\Sym^{d_i-3}S\dual  &(d_i-3\geq -1, \text{define}~ \Sym^0=\sO,
     \Sym^{-1}=0)\\
     =&\oplus_{i=1}^r\oplus_{j=0}^{d_i-3}\left(\sO_{\P}(j)\boxtimes\sO_{\P}(d_i-3-j)\right)
     &(\text{recall}~S\isom \pr_1^*\sO_{\P}(-1)\oplus \pr_2^*\sO_{\P}(-1))\\
    \end{eqnarray*}
    Thus $M\isom\left(\sO_{\P}(1)\boxtimes\sO_{\P}(2)\right)\otimes
p_*\left(q^*E(-3)\right)=
\bigoplus_{i=1}^r\bigoplus_{j=1}^{d_i-2}\left(\sO_{\P}(j)\boxtimes\sO_{\P}(d_i-j)\right)$,
and the top Chern class follows immediately.\\
The second point is obvious. As for the coefficient $a_0$, by
(\ref{Q}) it is the product of the coefficient of $H_2^{n-r+1}$ in
$c_{n-r+1}(M)$ and the coefficient of $H_2^{r-1}$ in
$c_{r-1}\left(\frac{\pr_1^*E}{\pr_1^*\sO_{\P}(1)\otimes\pr_2^*\sO_{\P}(-1)}\right)$,
which are $\prod_{i=1}^r(d_i-1)!$ and 1 respectively, by the first
two parts of this lemma. Remembering
$\left(\sum_{i=1}^rd_i\right)-r+1=n+2$, the calculation for $a_1$ is also straightforward.
\end{proof}

As a result, in this complete intersection case the decomposition Theorem \ref{main1} and its application Theorem \ref{main2} on the multiplicative structure on the Chow rings  then read as following respectively:

\begin{thm}\label{main1CI}
Let $X$ be a general Calabi-Yau complete intersection in a projective space, then in $\CH_n(X^3)$ we have a decomposition of the small diagonal:
\begin{equation*}
\left(\prod_{i=1}^r(d_i!)\right)\cdot
\delta_X=\Gamma+j_{12*}(Q(h_1,h_2))+j_{13*}(Q(h_1,h_2))+j_{23*}(Q(h_1,h_2))+P(h_1,h_2,h_3),
\end{equation*}
\end{thm}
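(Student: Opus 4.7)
The plan is to derive Theorem \ref{main1CI} as a direct specialization of Theorem \ref{main1} to the splitting case $E = \bigoplus_{i=1}^r \sO_\P(d_i)$ with $d_i \geq 2$ and $\sum d_i = n+r+1$. The only things to verify are (a) that this $E$ satisfies the positivity assumption $(*)$ (so that Theorem \ref{main1} applies), and (b) that the leading coefficient $a_0\cdot\deg(X)$ in front of $\delta_X$ equals $\prod_{i=1}^r d_i!$.

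For (a), I would argue line bundle by line bundle: given three distinct collinear points $y_1,y_2,y_3\in\P^1\subset\P$, the composition
\[
H^0(\P,\sO_\P(d_i)) \;\twoheadrightarrow\; H^0(\P^1,\sO_{\P^1}(d_i)) \;\lra\; \bigoplus_{j=1}^3 \sO_{\P^1}(d_i)_{y_j}
\]
is surjective because the first map is surjective (restriction from $\P$ to a line) and, since $d_i \geq 2$, the second map is also surjective—three distinct points impose independent conditions on $H^0(\P^1,\sO(d))$ as soon as $d\geq 2$. Taking the direct sum over $i$ gives the evaluation map of $E$, which is therefore surjective, so $(*)$ holds.

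For (b), I invoke Lemma \ref{spli}(3), which computes $a_0 = \prod_{i=1}^r (d_i-1)!$. Combined with the classical formula $\deg(X) = \prod_{i=1}^r d_i$ for the degree of a smooth complete intersection of multidegree $(d_1,\ldots,d_r)$ (i.e.\ the top Chern number of $E$, which in the split case is simply the product of the $d_i$), we get
\[
a_0 \cdot \deg(X) \;=\; \prod_{i=1}^r (d_i-1)! \cdot \prod_{i=1}^r d_i \;=\; \prod_{i=1}^r d_i!.
\]
Plugging this coefficient into the decomposition (\ref{main1eqn}) of Theorem \ref{main1} yields exactly the displayed formula of Theorem \ref{main1CI}.

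There is no genuine obstacle here—this is really a bookkeeping corollary, and the only thing that looks even mildly nontrivial is the positivity verification, which amounts to the elementary linear algebra observation that three points impose independent conditions on $\sO_{\P^1}(d)$ for $d\geq 2$. All the substantial content (the decomposition, the identification of $Q$ as a Chern class expression, the nonvanishing of the leading coefficient) has already been carried out in Theorem \ref{main1} and Lemma \ref{spli}.
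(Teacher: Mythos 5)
Your proposal is correct and follows essentially the same route as the paper: Theorem \ref{main1CI} is obtained by specializing Theorem \ref{main1} to the split bundle $E=\bigoplus_i\sO_\P(d_i)$, using Lemma \ref{spli}(3) for $a_0=\prod_i(d_i-1)!$ together with $\deg(X)=\prod_i d_i$ to identify the coefficient $a_0\deg(X)=\prod_i d_i!$. Your explicit check that $(*)$ holds in the split case (three distinct points imposing independent conditions on $H^0(\P^1,\sO(d))$ for $d\geq 2$) is a detail the paper leaves implicit, and it is carried out correctly.
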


\begin{thm}\label{main2CI}
Let $X$ be a general Calabi-Yau complete intersection in a projective space, then for any strictly positive
integers $k,l\in \N^*$, with $k+l=n$.
$$\im\left(\bullet: \CH^k(X)_{\Q}\times \CH^l(X)_{\Q}\to \CH_0(X)_{\Q}\right)=\Q\cdot h^n,$$
where $h=c_1(\sO_X(1))\in \CH^1(X)$.
\end{thm}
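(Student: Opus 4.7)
The plan is to reduce Theorem~\ref{main2CI} to Theorem~\ref{main2}: once we check that in the splitting case $E = \bigoplus_{i=1}^r \sO_\P(d_i)$ with $d_1 \geq \dots \geq d_r \geq 2$ the positivity assumption $(*)$ holds and the numerical conditions $a_0 \neq 0$ and $a_1 \neq a_0$ are satisfied, Theorem~\ref{main2} yields the conclusion verbatim.

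For the positivity assumption, I first observe that because $E$ splits as a direct sum of line bundles, the evaluation map at three points decomposes factor by factor, so it is enough to show surjectivity of $H^0(\P, \sO_\P(d_i)) \to \bigoplus_{k=1}^{3} \sO_\P(d_i)|_{y_k}$ for any collinear $y_1, y_2, y_3$. This factors through restriction to the line $\ell$ containing the $y_k$: the restriction $H^0(\P, \sO_\P(d_i)) \to H^0(\ell, \sO_\ell(d_i))$ is surjective by global generation of $\sO_\P(d_i)$, and on $\ell \cong \P^1$ the vanishing $H^1(\P^1, \sO(d_i-3)) = 0$ (valid for $d_i \geq 2$) shows that any length-$3$ subscheme of $\P^1$ imposes independent conditions on sections of $\sO_{\P^1}(d_i)$. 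Thus $(*)$ holds.

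For the numerical conditions, I would invoke Lemma~\ref{spli}, which computes explicitly
\[
a_0 \;=\; \prod_{i=1}^r (d_i-1)!, \qquad a_1 \;=\; a_0 \cdot \Bigl( \sum_{i=1}^r \sum_{j=1}^{d_i-2} \frac{j}{d_i-j} + n+2 \Bigr).
\]
Since each $d_i \geq 2$, one has $a_0 > 0$. The parenthesized factor in $a_1$ is a sum of nonnegative rationals plus $n+2 \geq 5$ (we assumed $n \geq 3$), hence strictly greater than $1$, and so $a_1 \neq a_0$.

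With both hypotheses verified, Theorem~\ref{main2} applies and yields precisely the desired equality $\im\bigl(\bullet\colon \CH^k(X)_\Q \times \CH^l(X)_\Q \to \CH_0(X)_\Q\bigr) = \Q \cdot h^n$. The present theorem is thus a direct combination of Theorem~\ref{main2} (itself built on the decomposition Theorem~\ref{main1CI}) with the explicit Chern-class computations of Lemma~\ref{spli}; there is no genuine obstacle beyond invoking those two inputs correctly.
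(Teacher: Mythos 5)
Your proposal is correct and follows exactly the paper's own route: Theorem \ref{main2CI} is obtained by feeding the explicit computations of Lemma \ref{spli} (giving $a_0=\prod_i(d_i-1)!\neq 0$ and $a_1=a_0\bigl(\sum_{i}\sum_{j}\tfrac{j}{d_i-j}+n+2\bigr)>a_0$) into Theorem \ref{main2}. Your extra verification that the splitting bundle satisfies the positivity assumption $(*)$ via $H^1(\P^1,\sO(d_i-3))=0$ is a welcome detail the paper leaves implicit, but it does not change the argument.
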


\begin{rmk}\label{BB}\upshape
Theorem \ref{main2CI} can be reformulated as: for a general Calabi-Yau complete intersection in a projective space, any \emph{decomposable} 0-cycle with $\Q$-coefficient is $\Q$-rational equivalent to zero if and only if it has degree 0. Here a $0$-cycle is called \emph{decomposable} if it is in the sum
of the images: $$\sum_{\substack{k+l=n\\
k,l>0}}\im\left(\CH^k(X)_\Q\times\CH^l(X)_\Q\lra{\bullet}\CH_0(X)_\Q\right).$$
It is interesting to compare this result for Calabi-Yau varieties with Beauville's \emph{`weak splitting principle'} conjecture in the holomorphic symplectic case. In \cite{MR2047674}, Beauville and Voisin reinterpreted their result (see Corollary \ref{BVCor} in \S0 Introduction) as some sort of compatibility with splitting of the conjectural Bloch-Beilinson filtration. In \cite{MR2187148}, Beauville proposed (and checked some examples of) a weak form of such compatibility for higher dimensional irreducible holomorphic symplectic varieties to check, namely his `weak splitting property' conjecture. Later in \cite{MR2435839}, Voisin formulates a stronger version of this conjecture: \emph{for an irreducible holomorphic symplectic projective variety, any polynomial relation between the \emph{cohomological} Chern classes of lines bundles and the tangent
bundle holds already for their \emph{Chow-theoretical} Chern
classes}. She also proved this conjecture for the variety of lines in a cubic four-fold and for Hilbert schemes of points on K3 surfaces in certain range (\cf \cite{MR2435839}).

Note that the Chern classes of the tangent bundle of a complete intersection is given by $$c(T_X)=\left(\frac{c(T_{\P})}{c(E)}\right)\bigg|_X,$$ which is clearly a cycle coming from the ambient projective space. Therefore comparing to the `weak splitting principle' for holomorphic symplectic varieties, our result for Calabi-Yau varieties is on one hand stronger, in the sense that besides the divisors and Chern classes of the tangent bundle, cycles of all strictly positive codimension are allowed in the polynomial; and on the other hand weaker since only the polynomials of (weighted) degree $n$ are taken into account.
\end{rmk}

%
%

\begin{rmk}[non-splitting case]\label{nonsplit}\upshape
In this remark, we discuss the possibility to generalize the above results to the broadest case of $X$ being the zero locus of a general section of vector bundle satisfying the positivity condition $(*)$ and Calabi-Yau condition $\det(E)\isom \sO_{\P}(n+r+1)$. By Theorem \ref{main2}, to draw the same conclusion that any degree 0 decomposable $\Q$-0-cycle is rationally equivalent to 0 (\cf Remark \ref{BB}), the only thing we need is to verify that the coefficients of the polynomial $Q$ defined in (\ref{Q}) satisfy the conditions\\
($\lozenge\lozenge$): $a_0\neq 0$ and $a_1\neq a_0$.\\
Although no proof is found for the time being, the author feels that very probably ($\lozenge\lozenge$) is always true under the condition $(*)$. In fact, once the Chern classes of $E$ are given, the coefficients $a_0$ and $a_1$ are practically computable:
Grothendieck-Riemann-Roch formula provides us the Chern character of the vector bundle $M$ in Lemma \ref{M}:
\begin{equation}
\ch(M)(t)=e^{(H_1+2H_2)t}\cdot\sum_{i=1}^r\frac{e^{(d_i-2)H_1t}-e^{(d_i-2)H_2t}}{e^{H_1t}-e^{H_2t}},
\end{equation}
from which $c_{n-r+1}(M)$ and thus
 $$Q(H_1,H_2)=c_{n-r+1}(M)\cdot
c_{r-1}\left(\frac{\pr_1^*E}{\pr_1^*\sO_{\P}(1)\otimes\pr_2^*\sO_{\P}(-1)}\right)$$ could be calculated without essential difficulties in practice. It is clear at this point that, the condition ($\lozenge\lozenge$) is an `open' condition; while the calculation for the splitting case (see the proof of Lemma \ref{spli}) seems to imply that the positivity condition $(*)$ makes ($\lozenge\lozenge$) far from being wrong. It is possible that some theory for the classification of (Chern classes of) vector bundles would be needed to get a genuine proof. A detailed discussion will appear in the author's future thesis.
\end{rmk}

\subsection{A contrasting example} \label{examplesection}
The results of Corollary \ref{BVCor} (\cite{MR2047674}), Corollary
\ref{VCor} (\cite{VoiFamilyK3}), and the generalization Theorem \ref{main2CI} in this paper suggest the following

\paragraph*{Question:} To what extent such degeneracy of intersection
products in Chow ring can be generalized to other smooth projective
varieties?

In this subsection, we construct a contrasting example, showing that in the results above the Calabi-Yau condition is essential, while the complete intersection assumption is not sufficient. More precisely, we will construct a \emph{smooth surface $S$ in $\P^3$ which is of general
type, such that the image of the intersection product map
$$\bullet: \Pic(S)\times\Pic(S)\to \CH_0(S)_{\Q}$$
has some elements not proportional to $h^2$, where
$h=c_1(\sO_S(1))$.}

Let $\Sigma$ be a general smooth quartic surface in $\P^3=:\P$. Thus
$\Sigma$ is a K3 surface. Let us denote by $V_{k,g}$ its
\emph{Severi variety}:
$$V_{k,g}:=\overline{\left\{C\in |\sO_\Sigma(k)|: C ~\text{is irreducible and nodal with}~ g(\tilde C)=g\right\}},$$
where $\tilde C$ means the normalization of $C$, and the closure is
taken in $|\sO_\Sigma(k)|$. One knows that $V_{k,g}$ is
smooth non-empty of dimension $g$ if $0\leq g\leq 2k^2+1$, whose
general member has $\delta:=2k^2+1-g$ nodes (\cf \cite{0557.14015} \cite{MR1675158}).

In particular, $V_{3,2}$ is of dimension 2, and its general member has 17 nodes.
We first show that the nodes of the irreducible curves parameterized by $V_{3,2}$ sweep out a 2-dimensional part of $\Sigma$. Indeed, consider general members $C_1\in V_{1,1}$ and $C_2\in V_{2,1}$, then $C_1$, $C_2$ are both irreducible, of normalization genus 1 and they intersect transversally at 8 points. Note that any morphism from an elliptic curve to K3 surface $f:E\to \Sigma$ with nodal image, the normal bundle of $f$ is $\frac{f^*T_{\Sigma}}{T_E}$, which is clearly trivial. Therefore $C_1$ and $C_2$ both vary in a 1-dimensional family with intersection points running over a 2-dimensional part of $\Sigma$, and thus the unions $C_1\cup C_2$ give a 2-dimensional family of (reducible) curves in $|\sO_{\Sigma}(3)|$ with 18 nodes, where at least one node (in $C_1\cap C_2$) sweeps out a 2-dimensional part of $\Sigma$. Keeping this node, we smoothify another node we get a 2-dimensional family of irreducible curves (with 17 nodes) in $V_{3,2}$ with one node sweeping out a 2-dimensional part as desired.

 As
$\CH_0(\Sigma)_{\Q}$ is different from $\Q$ and generated by the points of $\Sigma$, we can find $C'\in
V_{3,2}$ irreducible with 17 nodes $N_1,\cdots N_{17}$, such that at
least one of its nodes has class in $\CH_0(\Sigma)_{\Q}$ different
from $c_\Sigma:=\frac{1}{4}c_1\left(\sO_\Sigma(1)\right)^2$. Quite
obviously, there exist thus 16 of them, say the first 16, with sum
not rational equivalent to $16c_\Sigma$. Since $V_{3,3}$ is also
smooth, containing $V_{3,2}$ as a smooth divisor, we can deform $C'$
in $|\sO_\Sigma(3)|$ by keeping the first 16 nodes, and smoothifying
the last one, to get $C\in V_{3,3}$ with the sum
$Z:=N_1+\cdots+N_{16}$ of its 16 nodes not rationally equivalent to
$16c_\Sigma$.

Now we take another copy of $\P^3=:\P'$, and construct a finite
cover $\pi: \P'\to \P$ by taking $[X_0:X_1:X_2:X_3]$ to
$[q_0(X):q_1(X):q_2(X):q_3(X)]$ with $q_i$ quadratic polynomials in
$X_0,\cdots, X_3$ without base points, the $q_i$ will be given
later. We want to have an embedding $\tilde C\inj \P'$ such that the
following diagram commutes:
\begin{equation}\label{embed}
  \xymatrix{
  \tilde C\ar@{-->}[r] \ar[d]_{n} & \P'\ar[d]^{\pi}\\
  C\ar[r] & \P
  }
\end{equation}
We consider the square roots of $n^*\sO_C(1)$ in $\Jac(\tilde C)$,
\ie  $L\in \Jac(\tilde C)$ such that $L^{\otimes 2}\isom
n^*\sO_C(1)$. Note that such $L$ is a degree 6 divisor on the genus
3 curve $\tilde C$. We can choose one of these square roots $L$
which is very ample on $\tilde C$. Indeed, if none of them is very
ample, then each square root is of the form $\sO_{\tilde
C}(K_{\tilde C}+x+y)$ for some $x, y\in \tilde C$. Therefore all the
2-torsion points of $\Jac(\tilde C)$ are contained in a translation
of the image of $u: \Sym^2\tilde C\to \Jac(\tilde C)$, which is again a
translation of the theta divisor by Poincar\'e's formula. However,
it is known that for a principally polarized abelian variety, any
translation of a theta divisor cannot contain all the 2-torsion
points, see for example \cite{MR2960840} \cite{MR0204427}
\cite{MR0325625}.

$L$ being chosen as above, the corresponding embedding $i:\tilde
C\inj \P':=\P|L|^*\isom \P^3$, (with $\sO_{\tilde C}(1)=L$), induces
a morphism $i^*: H^0(\P',\sO_{\P'}(2))\to H^0(\tilde C,2L)$. We make
the following\\

\noindent\textbf{Claim$(\bigstar)$}: for the smoothification $C\in
V_{3,3}$
chosen generically, $i^*$ above is an isomorphism.\\

Since both vector spaces have the same dimension 10, we only need to
verify the injectivity of $i^*$, that is, $\tilde C$ in not
contained in a quadric of $\P'$. Suppose on the contrary that there
exists a quadric $Q\subset \P'$ containing $\tilde C$, then we have:

\begin{lemma}
 $\tilde C$ is hyperelliptic.
\end{lemma}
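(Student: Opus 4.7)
The strategy is to classify the quadric $Q \subset \P' \cong \P^3$ that contains $\tilde C$ and exhibit a $g^1_2$ on $\tilde C$ in each surviving case. First I would observe that $L$ is non-special on $\tilde C$: since $\deg L = 6$ exceeds $\deg K_{\tilde C} = 4$, Riemann--Roch gives $h^0(L) = 6 - 3 + 1 = 4$. Thus $|L|$ is a complete linear system embedding $\tilde C$ non-degenerately into $\P|L|^* \cong \P^3$, so $\tilde C$ lies in no plane. This rules out $Q$ being a double plane or a union of two distinct planes, as otherwise the irreducible $\tilde C$ would sit in a single plane component. Consequently $Q$ is irreducible: either a smooth quadric or a quadric cone.

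In the smooth case, $Q \cong \P^1 \times \P^1$ and $\tilde C$ is a smooth irreducible curve of some bidegree $(a,b)$. The degree in $\P^3$ gives $a + b = 6$, and the adjunction formula $(a-1)(b-1) = g(\tilde C) = 3$ has the unique solution $\{a,b\} = \{2,4\}$; the projection to the factor on which $\tilde C$ has degree $2$ then produces a $g^1_2$. In the cone case, I would pass to the minimal desingularization $\mu : F_2 \to Q$, with $(-2)$-section $s_0$ (contracted to the vertex) and fiber class $f$. Writing the proper transform $\tilde C' \sim \alpha s_0 + \beta f$ and intersecting with the pullback $s_0 + 2f$ of a general hyperplane section of $Q$ gives $\beta = 6$. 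Adjunction on $F_2$ with $K_{F_2} = -2s_0 - 4f$ then yields $p_a(\tilde C') = -\alpha^2 + 6\alpha - 5$; equating with $g(\tilde C) = 3$ forces $\alpha \in \{2,4\}$. Irreducibility of $\tilde C'$ together with $\tilde C' \neq s_0$ gives $\tilde C' \cdot s_0 = -2\alpha + 6 \geq 0$, singling out $\alpha = 2$. The ruling of $Q$ then cuts $\tilde C$ in $\tilde C' \cdot f = \alpha = 2$ points, again yielding a $g^1_2$. In either case $\tilde C$ carries a $g^1_2$ and is therefore hyperelliptic.

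The main delicate point is the cone case. The constraints $\alpha \leq 3$ together with $g = 3$ force $\tilde C$ to pass through the vertex of $Q$ (with $\tilde C' \cdot s_0 = 2$); had $\tilde C$ avoided the vertex, adjunction would give $g = 4$, inconsistent with the hypothesis. Hence one must work on the resolution $F_2$ and attentively handle the behaviour of $\tilde C$ at the singularity rather than computing naively on $Q$ itself. Once this bookkeeping is in place, the bidegree/adjunction arithmetic in each case is routine and the $g^1_2$ drops out automatically.
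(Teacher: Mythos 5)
Your proof is correct and follows essentially the same case analysis as the paper: rule out the reducible/degenerate quadrics, extract the $g^1_2$ from the $(2,4)$ bidegree on a smooth quadric, and from the ruling of the cone after showing the curve meets the vertex. The only difference is cosmetic: in the cone case you compute on the resolution $F_2$ and deduce $\alpha=2$ (hence passage through the vertex) from irreducibility, whereas the paper argues via $\Pic(Q)=\Z\cdot\sO_Q(1)$ that a curve missing the vertex would be a quadric--cubic complete intersection of genus $4$; both routes yield the same degree-$2$ map to the conic.
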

\begin{proof}
If $Q\isom \P^1\times \P^1$ is smooth, we denote the class of its two fibers by $l_1$, $l_2$, and assume $\tilde C=a\cdot l_1+b\cdot l_2$ for $a,b \in \Z$. Since $\tilde C$ is of degree 6 and $\sO_Q(1)=\sO_{\P^1}(1)\boxtimes\sO_{\P^1}(1)$, we have $a+b=6$. On the other hand, the fact that $\tilde C$ is of genus 3 implies that $\left(\tilde C^2\right)+\left((-2l_1-2l_2)\cdot \tilde C\right)=4$, which is equivalent to $2ab-2(a+b)=4$. Therefore $a=2, b=4$ (or $a=4, b=2$), \ie $\tilde C\in \big|\sO_{\P^1}(2)\boxtimes\sO_{\P^1}(4)\big|$. In particular, the projection to the first (or second) ruling of $Q$ shows that $\tilde C$ is hyperelliptic.

If $Q$ is the projective cone over a conic with the singular point
$O$, then $\tilde C$ must pass through $O$. Indeed, if $O\notin
\tilde C$, then $\tilde C$ is a Cartier divisor of $Q$. Since
$\Pic(Q)=\Z\cdot \sO_{Q}(1)$ and $\deg(\tilde C)=6$, we find that
$\tilde C$ is the (smooth) intersection of $Q$ with a cubic.
However, the smooth intersection of a cubic and a quadric in $\P^3$
should has genus 4 by the adjunction formula. This contradiction
shows $O\in \tilde C$. Then (after the blow-up of $Q$ at $O$), the
projection from $O$ to the conic provides a degree 2 morphism from
$\tilde C$ to the conic, showing that $\tilde C$ is hyperelliptic.

If $Q$ is the union of two projective plans, then $\tilde C$ is a
plan curve of degree 6, but then its genus should be 10 instead of
3, so this case can not happen at all.
\end{proof}

Since the smoothification $C$ can be chosen in a 3-dimensional
family $B$, and the normalization $\tilde C$ as well as the choice
of square root $L$ can also be carried over this base variety $B$
(by shrinking $B$ if necessary), we obtain a 3-dimensional family of
hyperelliptic curves mapping to the K3 surface $\Sigma$. Their
hyperelliptic involutions then yield a family of rational curves on
$\Sigma^{[2]}$ parameterized by $B$, where $\Sigma^{[2]}$ is the
Hilbert scheme of 0-dimensional subschemes of length 2 on $\Sigma$,
which is an irreducible holomorphic symplectic variety (\cf
\cite{MR730926}). Namely, we have the following diagram:
\begin{displaymath}
  \xymatrix{
  P\ar[d]\ar[r]^{f} & \Sigma^{[2]}\\
  B& \\
  }
\end{displaymath}
where $P$ is a $\P^1$-bundle over $B$ and $f$ is the natural
morphism. We now exclude this situation case by case:
\begin{itemize}
  \item If $f$ is generically finite, or equivalently dominant, then $\Sigma^{[2]}$ would be dominated by a ruled
  variety, thus uniruled, contradicts to the fact that its canonical bundle
  is trivial.
  \item If the image of $f$ is of dimension 3, \ie $f$ dominates a
  prime divisor $D$ of $\Sigma^{[2]}$. Let $\tilde D$ be a suitable
  resolution of singularities of $D$, such that the rationally connected quotient of $\tilde
  D$ is a morphism $q: \tilde D\to T$. After some blow-ups of $P$
  if necessary, we have the following commutative diagram:
  \begin{displaymath}
  \xymatrix{
  P\ar[d]\ar[r]^{f} & \tilde D\ar[d]^{q}\\
  B\ar[r]_{\bar f}& T \\
  }
\end{displaymath}
Firstly, we note that $\dim(T)\leq 2$ since $\tilde D$ is covered by
rational curves. Secondly, since the fibres of $q$ are rationally
connected, the morphism $q^*:H^{2,0}(T)\to H^{2,0}(\tilde D)$ is
surjective. However, let $\sigma\in H^{2, 0}(\Sigma^{[2]})$ be the holomorphic symplectic form (\cf \cite{MR730926}), then its restriction (more precisely, its pull-back) to $\tilde D$ is non-zero (since
$\dim(D)=3$). Therefore $H^{2,0}(T)\neq 0$, which implies in
particular $\dim(T)\geq 2$.\\
The above argument shows that $\dim(T)=2$. As a result, the fibres
of $q$ are unions of rational curves, and the dimension of the
fibres of $\bar f$ is at least 1. Therefore $f$ maps some
1-dimensional family of $\P^1$ into a union of (finitely many)
rational curves on $\Sigma^{[2]}$, which implies that all the
rational cuves in this 1-dimensional family are mapped into a same
rational curve, contradicting to the fact that all the rational curves
parameterized by $B$ are distinct from each other.

\item If the image of $f$ is contained in a surface of
$\Sigma^{[2]}$, then a dimension counting shows that through a
general point of the surface passes a 1-dimensional family of
rational curves, so it is actually a rational surface. As a result,
the corresponding points of $\Sigma^{[2]}$ are all equal up to
rational equivalence; in other words, the push-forwards of
the $g_2^1$'s of the family of hyperelliptic curves, viewed as
elements in $\CH_0(\Sigma)_{\Q}$, are all equal. However, for any
hyperelliptic curve $\tilde C$ in this family, let $\iota: \tilde
C\to \Sigma$ be the composition of the normalization $n$ with the natural inclusion, then
$$\iota_*\left(2g_2^1\right)=\iota_*\left(K_{\tilde
C}\right)=\iota_*\left(\iota^*\sO_{\Sigma}(C)|_{C}-n^*(Z)\right)=\sO_{\Sigma}(3)|_{C}-2Z,$$
which is non-constant by our construction, where $Z$ is the sum of
the nodes of $C$. This is a contradiction.
\end{itemize}
In conclusion, we have proved the claim $(\bigstar)$.

 Thanks to the isomorphism
$i^*: H^0(\P',\sO_{\P'}(2))\to H^0(\tilde C,2L)$, we can define the
morphism $\pi: \P'\to \P$, hence also the $q_0, \cdots, q_3$ as promised, by the following commutative diagram:
\begin{displaymath}
  \xymatrix{
  H^0(\P,\sO_{\P}(1)) \ar[r] \ar[d]^{\pi^*} & H^0(C, \sO_C(1))\ar[d]^{n^*}\\
  H^0(\P',\sO_{\P'}(2))\ar[r]^{i^*} & H^0(\tilde C,2L)}
\end{displaymath}
achieving the commutative diagram (\ref{embed}). Define
$S:=\pi^{-1}(\Sigma)$, then $\tilde C$ is a curve in $S$:
\begin{displaymath}
  \xymatrix{
  \tilde C\ar[r] \ar[d]^{n} & S \cart \ar[r] \ar[d]^{p} & \P'\ar[d]^{\pi}\\
  C \ar[r] & \Sigma \ar[r] & \P
  }
\end{displaymath}
By the adjunction formula, we have $$\sO_{S}(\tilde C)|_{\tilde C}+K_S|_{\tilde C}=K_{\tilde C}=n^*\bigg(\sO_\Sigma(C)|_C\otimes
\sO_C(-Z)\bigg).$$ 
Note that $\deg(S)=\deg(p)=\deg(\pi)=8$, hence $K_S=\sO_S(4)=p^*\sO_{\Sigma}(2)$. Therefore the above equality implies
$$\sO_{S}(\tilde C)|_{\tilde C}=n^*\left(\sO_C(1)\otimes \sO_C(-Z)\right).$$
Pushing this forward to $\Sigma$, we deduce that $$p_*({\tilde C}^2)=12c_{\Sigma}-2Z.$$ Since $Z\neq 16c_\Sigma$, ${\tilde C}^2$ is not
proportional to $h^2$ as desired, where $h=c_1(\sO_S(1))$.

In the above construction, we have not yet verified the smoothness
of $S$. However, as above the smoothification $C$ can be chosen in a 3-dimensional family $B$, and the construction of $\tilde C$, $L$,
$\pi$ and finally $S$ can also be carried over this base $B$ (by
shrinking $B$ if necessary). Generically, the family $\sS$ is the
transverse pull-back
\begin{displaymath}
 \xymatrix{
 \sS \ar[r] \ar[d]^{p_B} & \P'\times B \ar[d]^{\tilde\pi}\\
\Sigma\times B \ar[r] & \P\times B
  }
\end{displaymath}
which is generically smooth over $B$, thus a general fibre $S$ is
smooth.

\section{Higher powers of hypersurfaces of general type}
We now turn to the study of the smallest diagonal in higher self-products of a non-Fano hypersurface in the projective space. Our goal is to establish  Theorem \ref{thmB} in the introduction.

First of all, let us fix the basic set-up for this section. Let
$\P:=\P^{n+1}$ be the projective space, and $X$ be a general hypersurface in $\P$ of degree $d$, with $d\geq n+2$. Thus $X$ is an $n$-dimensional smooth variety. Since $K_X=\sO_X(d-n-2)$, when $d>n+2$, $X$ has ample canonical bundle thus is of general type; when $d=n+2$,
$X$ is of Calabi-Yau type and we will recover the results of
\cite{VoiFamilyK3}. Let $k:=d+1-n\geq 3$. Our first objective is to express in the Chow group of $X^k$ the class of the \emph{smallest} diagonal
 $$\delta_X:=\left\{(x,x,\cdots,x)~|~x\in
X\right\}$$ in terms of bigger diagonals. Since there will be various types of diagonals involved, we need
some systematic notation to treat them.
\begin{defi}\label{combdef}\upshape
   For any positive integers $s\leq r$,
  \begin{enumerate}
    \item we define the set $N^r_s$ to
  be the set of all possible \emph{partitions}\footnote{Here we
use the terminology `partition' in an unusual way.}  of $r$ elements
into $s$ non-empty \emph{non-ordered} parts. In other words:
  $$N^r_s:=\left\{\{1,2,\cdots,r\}\surj\{1,2,\cdots,s\}~ \text{surjective maps}\right\}/{\mathfrak{S}_s}.$$
    Here the action of the symmetric groups is induced from the
    action on the target. The action is clearly free, and we take the quotient in the naive (set-theoretical)
    way. We will view such a surjective map as some sort of
    \emph{degeneration} of $r$ points into $s$ points.
    In every equivalent class, we have a canonical representative
    $\dot\alpha:\{1,2,\cdots,r\}\surj\{1,2,\cdots,s\}$ such that $\dot\alpha(1),\dot\alpha(2),\cdots,
    \dot\alpha(r)$ is alphabetically minimal.
    \item For positive integers $t\leq s\leq r$, let $\alpha\in N^r_s$ and $\alpha'\in N^s_t$ be two partitions, then
   the \emph{composition} $\alpha\alpha'\in N^r_t$ is defined in
   the natural way, and obviously $\dot\alpha'\circ\dot\alpha$ is
   the minimal representative of $\alpha\alpha'$.
    \item For every $\alpha\in N^r_s$, and any set (or algebraic variety)
    $Y$, we have a natural morphism denoted still by $\alpha$,
    \begin{eqnarray*}
      \alpha: Y^s &\to& Y^r\\
      (y_1,y_2,\cdots, y_s)&\mapsto& (y_{\dot\alpha(1)}, y_{\dot\alpha(2)},\cdots, y_{\dot\alpha(r)})
    \end{eqnarray*}
   where $\dot\alpha$ is the minimal representative of $\alpha$
   defined above. Note that the morphism induced by their composition $\alpha\alpha'$ is
   exactly the composition of the morphisms induced by $\alpha$ and
   $\alpha'$, so there is no ambiguity in the notation $\alpha\alpha':Y^t\to
   Y^r$.
   \item The \emph{pull-back} of an $r$-tuple $\underline a=(a_1,
   a_2,\cdots, a_r)$ by an element $\alpha\in N^r_s$ is defined by
    $$\alpha^*(\underline a):= (b_1, \cdots, b_s),$$ where
    $b_j:=\sum_{\dot\alpha(i)=j}a_i$ for any $1\leq j\leq s$.
    We note that pull-backs are
    functorial: $(\alpha\alpha')^*(\underline a)=\alpha'^*\alpha^*(\underline
    a)$.
  \end{enumerate}
\end{defi}
Let us explain this definition in a concrete example: if $r=5, s=3$,
and the partition of $\{1,2,3,4,5\}$ is
$\alpha=\left(\{1,3\};\{4\};\{2,5\}\right)\in N^5_3$, then the
representative $\dot\alpha: 1\mapsto 1, 2\mapsto 2, 3\mapsto1,
4\mapsto3, 5\mapsto2$, and thus the corresponding morphism for any
$Y$ is
\begin{eqnarray*}
\alpha: Y^3& \to& Y^5\\
(y_1,y_2,y_3)&\mapsto&(y_1,y_2,y_1,y_3,y_2).
\end{eqnarray*}
And also the pull-back of a 5-tuple is $\alpha^*(a_1,\cdots,
a_5)=(a_1+a_3, a_2+a_5, a_4)$. If we have another $t=2$, $\alpha'\in
N^3_2$ defined by $(\{1,3\};\{2\})$, then $\alpha\alpha'$ is
$(\{1,2,3,5\};\{4\})$, and $\alpha\alpha':Y^2\to Y^5$ maps
$(y_1,y_2)$ to $(y_1,y_1,y_2,y_1,y_1)$.

This definition is nothing else but all the diagonal inclusions we
need in the sequel: for instance, the unique partition in $N^2_1$ is the
diagonal $\Delta_Y\subset Y\times Y$; the three elements in
$N^3_2$ is the so-called \emph{big} diagonals of $Y\times Y\times Y$
in the preceding section, and the unique partition in $N^r_1$ is the
smallest diagonal $\delta_Y\subset Y^r$. Note that the morphisms
$\alpha$ also induce morphisms from $Y^s\backslash\delta_Y$ to
$Y^r\backslash\delta_Y$.\\

Now return to our geometric setting. As in the preceding section,
for any integer $r\geq 2$, we define $$W_r:=
\left\{(y_1,y_2,\cdots,y_r)\in \P^{\times r}~|~y_i~ \text{are
collinear}\right\}\backslash\delta_{\P}.$$ In other words, if we
denote by $L\to G$ the universal line over the Grassmannian
$G:=\Gr(\P^1, \P)$, then in fact $W_r=
\underbrace{L\times_G\cdots\times_G L}_{r}\backslash \delta_{L}$. In
particular $W_r$ is a smooth variety with $\dim W_r=\dim G+r=2n+r$.

Similarly, for any $r\geq 2$, we define a closed subvariety $V_r$ of
$X^r\backslash\delta_X$ by taking the closure in
$X^r\backslash\delta_X$ of
$$V_r^o:=\left\{(x_1,x_2,\cdots,x_r)\in
X^r~|~x_i ~\text{are collinear and distinct}\right\}$$
$$V_r:=\bar{V_r^o}.$$
Then we can prove as in Lemma \ref{intersectionlemma} (in fact more
easily) or as in \cite{VoiFamilyK3} the following lemma. See also \cite{MR2339831}.
\begin{lemma} \label{intersectionlemma2} Consider the intersection of
$W_r$ and $X^r\backslash\delta_X$ in $\P^{\times
r}\backslash\delta_{\P}$. The intersection scheme has the following
irreducible component decomposition:
\begin{equation}\label{intersection2}
 W_r\cap (X^r\backslash\delta_X)= V_r\cup \bigcup_{2\leq
 s<r}\bigcup_{\alpha\in N^r_s}\alpha(V_s)
\end{equation}
Moreover, the intersection along each component is transversal, in
particular the intersection scheme is of pure dimension $2n$, that
is $\dim V_s=2n$ for any $s$. In particular, (\ref{intersection2})
also holds scheme-theoretically:
\begin{displaymath}
\xymatrix{ V_r\cup \bigcup_{2\leq
 s<r}\bigcup_{\alpha\in N^r_s}\alpha(V_s) \cart \ar[r] \ar[d] & X^r\backslash\delta_X \ar[d]\\
W_r \ar[r] &  \P^{\times r}\backslash\delta_{\P} }
\end{displaymath}
\end{lemma}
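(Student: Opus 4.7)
My plan follows the pattern of the proof of Lemma \ref{intersectionlemma}, simplified by the fact that we are intersecting $W_r$ with $X^r$ defined by sections of the \emph{rank-one} bundle $\sO_\P(d)$, so that no excess normal bundle appears. For the set-theoretic decomposition, given $(x_1,\ldots,x_r)\in W_r\cap (X^r\setminus\delta_X)$, the equivalence relation $i\sim i'\iff x_i=x_{i'}$ partitions $\{1,\ldots,r\}$ into $s\in\{2,\ldots,r\}$ classes (not $s=1$, as $\delta_X$ has been removed). This yields an $\alpha\in N^r_s$ and distinct collinear points $y_1,\ldots,y_s$ on $X$ with $x_i=y_{\dot\alpha(i)}$, so $(x_1,\ldots,x_r)\in\alpha(V_s^o)$. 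The reverse inclusion is obvious; taking closures in $X^r\setminus\delta_X$ gives the set-theoretic form of the decomposition.

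Next I establish smoothness of each $V_s^o$ and the expected dimension. For each $s\in\{2,\ldots,r\}$, form
\[
I_s:=\bigl\{([f],(y_1,\ldots,y_s))\in\P\bigl(H^0(\P,\sO_\P(d))\bigr)\times W_s^o \bigm| f(y_j)=0,\ j=1,\ldots,s\bigr\}.
\]
The evaluation $H^0(\P,\sO_\P(d))\to\bigoplus_{j=1}^s\C$ at $s$ distinct collinear points factors through restriction to a line followed by evaluation of degree-$d$ polynomials at $s$ distinct points of $\P^1$, which is surjective once $s\leq d+1$; the bound $s\leq r\leq k=d+1-n\leq d$ guarantees this. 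Hence the projection $I_s\to W_s^o$ is a projective bundle, so $I_s$ is smooth. The other projection to $\P\bigl(H^0(\P,\sO_\P(d))\bigr)$ is dominant, since a generic line meets a general $X$ in $d\geq s$ distinct points. Generic smoothness then yields $V_s^o$ smooth of codimension $s$ in $W_s^o$, i.e.\ of dimension $2n$, and shows $W_s^o\cap X^s$ transversal along $V_s^o$.

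The main step is transversality of $W_r\cap X^r$ along the composite component $\alpha(V_s)$ for $s<r$: one must check it appears reduced in the intersection scheme despite coordinate coincidences. At a general $p=\alpha(y_1,\ldots,y_s)\in\alpha(V_s^o)$ lying on a line $\ell$, parametrize $T_pW_r$ as those $(u_1,\ldots,u_r)\in\bigoplus_{i=1}^r T_{y_{\dot\alpha(i)}}\P$ whose images in $T_{y_{\dot\alpha(i)}}\P/T_{y_{\dot\alpha(i)}}\ell$ are induced by a common deformation $\delta\in T_\ell\Gr(1,\P)$ of the line. For $(u_1,\ldots,u_r)\in T_pW_r\cap T_pX^r$ and any $i,i'$ with $\dot\alpha(i)=\dot\alpha(i')=j$, the vectors $u_i,u_{i'}\in T_{y_j}X$ share their image in $T_{y_j}\P/T_{y_j}\ell$, so $u_i-u_{i'}\in T_{y_j}X\cap T_{y_j}\ell$. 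At a general point $\ell$ meets $X$ transversely at each $y_j$, this intersection is zero, and $u_i=u_{i'}$; whence $T_pW_r\cap T_pX^r=T_p\alpha(V_s)$, of dimension $2n$, giving transversality.

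The main obstacle is precisely this tangent-space analysis, which requires a careful parametrization of $W_r$ at points with coincident coordinates; the rank-one hypothesis is essential, since $T_{y_j}X$ has codimension one in $T_{y_j}\P$ and therefore intersects a transverse line direction in zero---whereas in the rank-$r$ setting of Lemma \ref{intersectionlemma} a genuine excess bundle appears. Combined with the set-theoretic equality and generic reducedness along each pure $2n$-dimensional component---noting that $W_r\cap X^r$ is cut out of the smooth $W_r$ by $r$ equations of the expected codimension, hence is Cohen--Macaulay, so $R_0$ and $S_1$ give reducedness---this yields the scheme-theoretic identity claimed.
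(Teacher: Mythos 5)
Your proof is correct and follows exactly the route the paper intends: the paper gives no separate argument for this lemma but defers to the proof of Lemma \ref{intersectionlemma} (incidence variety plus generic smoothness for the main component, and a local analysis at a general point of each degenerate component), which is what you carry out. The one genuinely new ingredient relative to Lemma \ref{intersectionlemma} --- that in the rank-one case the intersection along the degenerate components $\alpha(V_s)$ is actually transversal rather than excessive --- is correctly supplied by your tangent-space computation (and your Cohen--Macaulay argument for global reducedness is a clean strengthening of the paper's ``general point'' reduction), so no changes are needed.
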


We now define some vector bundles on $V_r$ for any $r\in
\{2,3,\cdots, k\}$, which are analogues of the vector bundle $F$
defined in the preceding section. Let $S$ be the tautological rank 2
vector bundle on $W_r$, such that $p:\P(S)\to W_r$ is the
tautological $\P^1$-bundle, which admits $r$ tautological sections
$\sigma_i: W_r\to \P(S)$, where
$i=1,\cdots,r$. Let $q: \P(S)\to \P^{n+1}$ be the natural morphism.
We summarize the situation by the following diagram:
\begin{displaymath}
 \xymatrix{
\P(S) \ar[r]^{q} \ar[d]^{p} & \P\\
W_r\ar@/^/[u]<3ex>^{\sigma_i} \ar@/^/[u]<1ex>^{\cdots} }
\end{displaymath}
Let $B_i$ be the image of the section $\sigma_i$, which is a divisor
of $\P(S)$, for $i=1,\cdots,r$. For any $r$-tuple $\underline
a:=(a_1, a_2,\cdots, a_r)$ such that $\sum_{i=1}^ra_i=k$, we make the following constructions:
\begin{enumerate}
  \item A sheaf on $W_r$ by
\begin{equation*}
\tilde F\left(\underline a\right):=p_*\left(q^*\sO_{\P}(d)\otimes
\sO_{\P(S)}(-a_1B_1-\cdots-a_rB_r)\right).
\end{equation*}
As in Lemma \ref{FLemma}, we can prove that $\tilde F(\underline a)$
is a vector bundle on $W_r$ of rank $d+1-k=n$, with
 fibre $$\tilde F(\underline a)_{y_1\cdots y_r}=H^0(\P^1_{y_1\cdots y_r}, \sO(d)\otimes\sO(-a_1y_1-a_2y_2-\cdots-a_ry_r)).$$
 \item A rank $n$ vector bundle on $V_r$ by restriction: $$F(\underline a):=\tilde
F(\underline a)|_{V_r}.$$
\item An $n$-dimensional algebraic cycle on
$$\gamma_{\underline a}:=\gamma_{a_1,\cdots,
a_r}:=i_{r*}c_n\left(F(a_1, \cdots, a_r)\right)\in
\CH_n(X^r\backslash\delta_X),$$ where for any integer $r\geq 2$, we
denote the natural inclusion $i_r:V_r\to X^r\backslash\delta_X$.
\end{enumerate}
Recall that for any $2\leq s\leq r$ and any $\alpha\in N^r_s$, the morphisms $\alpha:X^s\backslash\delta_X\to X^r\backslash\delta_X$ induces a diagonal inclusion $\alpha: V_s\inj W_r$, whose formula is given by repeating some coordinates.
We observe the following relation:
\begin{lemma} \label{restriction}
For any $r$-tuple $\underline a:=(a_1, a_2,\cdots, a_r)$ such that
$\sum_{i=1}^ra_i=k$, the restriction of the vector bundle $\tilde F(\underline a)$ on $W_r$ to the image
  $\alpha(V_s)$ gives the vector bundle $F(\alpha^*(\underline a))$ on $V_s$, \ie
   \begin{equation}
\tilde F(\underline a)|_{\alpha(V_s)}=F(\alpha^*(\underline a)),
   \end{equation}
   where $\alpha^*(\underline a)$ is defined in Definition
   \ref{combdef}.
\end{lemma}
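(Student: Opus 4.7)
The approach is to reduce the claimed identity of vector bundles to a statement about pull-backs along the natural morphism of tautological $\P^1$-bundles lying over $\alpha: W_s \to W_r$, and then invoke cohomology-and-base-change to commute this pull-back with the pushforward that defines $\tilde F$.

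First, I would note that $\alpha: V_s \to W_r$ factors through the morphism $\alpha: W_s \to W_r$ given by the same repeat-coordinate formula (well-defined as soon as $s \geq 2$, so the image avoids $\delta_\P$). The key geometric observation is that this is covered by a canonical isomorphism of $\P^1$-bundles: the fiber of $p: \P(S_{W_r}) \to W_r$ over $\alpha(y_1, \ldots, y_s)$ and the fiber of $p: \P(S_{W_s}) \to W_s$ over $(y_1, \ldots, y_s)$ are both the line $\P^1_{y_1 \cdots y_s}$. Hence $\alpha^* S_{W_r} \cong S_{W_s}$, and one obtains a cartesian square
\[
\xymatrix{
\P(S_{W_s}) \ar[r]^{\tilde\alpha} \ar[d]_{p_s} & \P(S_{W_r}) \ar[d]^{p_r} \\
W_s \ar[r]_{\alpha} & W_r
}
\]
compatible with the natural maps $q$ to $\P$. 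Under $\tilde\alpha$, the $i$-th tautological section $\sigma_i$ of $p_r$ pulls back to the $\dot\alpha(i)$-th tautological section of $p_s$, so $\tilde\alpha^* B_i = B^{W_s}_{\dot\alpha(i)}$.

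The next step is a purely combinatorial rewriting using Definition \ref{combdef}(iv): the divisor $\sum_{i=1}^r a_i B_i$ pulls back to
\[
\sum_{i=1}^r a_i B^{W_s}_{\dot\alpha(i)} = \sum_{j=1}^s \left(\sum_{\dot\alpha(i)=j} a_i\right) B^{W_s}_j = \sum_{j=1}^s b_j B^{W_s}_j,
\]
where $(b_1, \ldots, b_s) = \alpha^*(\underline a)$. Consequently, the pull-back of the twisted line bundle $q^* \sO_\P(d) \otimes \sO_{\P(S_{W_r})}(-\sum_i a_i B_i)$ is precisely the line bundle whose $p_{s*}$ computes $\tilde F(\alpha^*(\underline a))$.

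The final, and only non-formal, step is to commute $\alpha^*$ with the pushforward $p_*$. I would invoke Grauert's base-change theorem (already used to prove that $\tilde F$ is a vector bundle): the restriction of the line bundle in question to any $\P^1$-fiber has degree $d - \sum_i a_i = d - k = n - 1 \geq -1$, so $H^1$ of each fiber vanishes and formation of $p_*$ commutes with arbitrary base change, in particular with $\alpha$. This yields $\alpha^* \tilde F(\underline a) \cong \tilde F(\alpha^*(\underline a))$ on $W_s$; restricting both sides to $V_s \subset W_s$ gives the claim. The only point that demands care is verifying the cartesian square and its compatibility with the tautological sections, which is a direct unwinding of the universal properties of the Grassmannian $G$ and of its universal line; everything else is formal bookkeeping.
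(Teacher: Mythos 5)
Your proof is correct and follows the same underlying idea as the paper's: identify the restriction of $\tilde F(\underline a)$ along the diagonal map by observing that repeated marked points merge their multiplicities $a_i$ into the sums $b_j=\sum_{\dot\alpha(i)=j}a_i$, which is exactly $\alpha^*(\underline a)$. The paper's own proof simply checks this fibre by fibre in a representative special case, whereas you formalize the same computation via the cartesian square of tautological $\P^1$-bundles over $\alpha$ and cohomology-and-base-change (legitimate here since $d-k=n-1\geq -1$ kills $H^1$ on each fibre); this is the same argument carried out in full rather than a different one.
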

\begin{proof} To avoid heavy notation, let us explain in the simplest case
that $s=r-1$ and the partition $\alpha$ is given by
$(\{1,2\};\{3\};\{4\};\cdots;\{r\})$, then
  $\alpha: V_s\to W_r$ maps $(x_1, x_2,\cdots, x_s)$ to
  $(x_1,x_1,x_2,\cdots,x_s)$. Therefore the fibre of  $\tilde F(\underline
  a)|_{\alpha(V_s)}$ over the point $(x_1, x_2,\cdots, x_s)$ is
  exactly the fibre of $\tilde F(\underline a)$ over the point
  $(x_1,x_1,x_2,\cdots,x_s)$, which is
 $$H^0(\P^1_{x_1\cdots x_s},
  \sO(d)\otimes\sO(-(a_1+a_2)x_1-a_3x_2-\cdots-a_sx_r)).$$ It is
  nothing else but the fibre of $F(\alpha^*(\underline a))$ over
  the point $(x_1,x_2,\cdots,x_s)\in V_s$.
\end{proof}

Using the same trick as in Proposition \ref{basicequality}, we obtain the following recursive relations.

\begin{prop}\label{recursion}
The algebraic cycles $\gamma$ satisfy some recursive equalities: for
any integer $r\geq 3$ and any $r$-tuple $\underline a=(a_1, \cdots,
a_r)$ with $\sum_{i=1}^ra_i=k$, we have in
$\CH_n(X^r\backslash\delta_X)$,
\begin{equation}\label{recursionformula}
\gamma_{\underline a}+\sum_{2\leq s<r}\sum_{\alpha\in
N^r_s}\alpha_*\gamma_{\alpha^*(\underline a)}+ P_{\underline
a}(h_1,\cdots,h_r)=0,
\end{equation}
where $P_{\underline a}$ is a homogeneous polynomial of degree
$n(r-1)$ depending on $\underline a$. Moreover the starting data
are given by
\begin{equation}\label{starting}
\gamma_{a,b}=\prod_{i=0}^{n-1}\bigg((b+i)h_1+(n-1-i+a)h_2\bigg)
\end{equation}
 for any
$a+b=k$. Here $h_i$ is the pull-back of $h=c_1\left(\sO_X(1)\right)$
by the $i$-th projection.
\end{prop}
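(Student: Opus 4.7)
The plan is to mimic the strategy of Proposition \ref{basicequality}: apply the commutativity of refined Gysin pullback and proper push-forward (Fulton, Theorem 6.2(a)) to the top Chern class $c_n(\tilde F(\underline a))\in \CH^n(W_r)$ in the cartesian diagram of Lemma \ref{intersectionlemma2}. Since $X\subset \P$ is a hypersurface, the inclusion $i_1: X^r\backslash\delta_X \hookrightarrow \P^{\times r}\backslash\delta_\P$ is a regular embedding of codimension $r$, so the refined Gysin map $i_1^!$ is defined, and the key identity to exploit is
\[
i_{2*}\bigl(i_1^! c_n(\tilde F(\underline a))\bigr) \;=\; i_1^!\bigl(i_{3*} c_n(\tilde F(\underline a))\bigr).
\]

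On the right-hand side, since $\CH^*(\P^{\times r}\backslash\delta_\P)\cong \CH^*(\P^{\times r})=\Z[H_1,\dots,H_r]/(H_i^{n+2})$ is generated by the hyperplane classes, we may write $i_{3*} c_n(\tilde F(\underline a))=-P_{\underline a}(H_1,\dots,H_r)$ for a homogeneous polynomial of degree $n(r-1)$ with integer coefficients; pulling back yields $-P_{\underline a}(h_1,\dots,h_r)$ in $\CH_n(X^r\backslash\delta_X)$. The crucial advantage over Proposition \ref{basicequality} is that the intersection of $W_r$ with $X^r$ is transversal along \emph{every} stratum $\alpha(V_s)$ by Lemma \ref{intersectionlemma2}, so no excess normal bundle contributes, and
\[
i_1^!([W_r])=[V_r]+\sum_{2\le s<r}\sum_{\alpha\in N^r_s}[\alpha(V_s)].
\]
Combined with Lemma \ref{restriction} (the identification $\tilde F(\underline a)|_{\alpha(V_s)}=F(\alpha^*(\underline a))$) and the projection formula, the left-hand side of the displayed identity becomes $\gamma_{\underline a}+\sum_{2\le s<r}\sum_{\alpha\in N^r_s}\alpha_*\gamma_{\alpha^*(\underline a)}$, which is exactly the recursion (\ref{recursionformula}).

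For the base case $r=2$, any two distinct points are automatically collinear, so $V_2=X^2\backslash\Delta_X$ and $i_2$ is the identity; it therefore suffices to compute $c_n(\tilde F(a,b))$ directly. Following Lemma \ref{M}, the relations $\sO_{\P(S)}(-B_1)=p^*(\pr_2^*\sO_\P(1))\otimes \sO_{\P(S)}(-1)$ and $\sO_{\P(S)}(-B_2)=p^*(\pr_1^*\sO_\P(1))\otimes \sO_{\P(S)}(-1)$, together with the splitting $S\dual=\pr_1^*\sO_\P(1)\oplus \pr_2^*\sO_\P(1)$, give $p_*\sO_{\P(S)}(d-k)=\Sym^{n-1}S\dual=\bigoplus_{i=0}^{n-1}\sO_\P(i)\boxtimes\sO_\P(n-1-i)$, and the projection formula then produces
\[
\tilde F(a,b)=\bigoplus_{i=0}^{n-1}\sO_\P(b+i)\boxtimes \sO_\P(a+n-1-i);
\]
restricting its top Chern class to $X^2\backslash\Delta_X$ recovers (\ref{starting}).

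The only substantive technical point is the transversality along every stratum $\alpha(V_s)$, which is essential to avoid excess intersection contributions and is packaged into Lemma \ref{intersectionlemma2}; once this is granted, the combinatorial bookkeeping of pull-backs of $r$-tuples via the partitions $\alpha$ (handled by Lemma \ref{restriction}) is the substantive combinatorial ingredient, and the rest is a routine application of Fulton's commutativity formula. In particular, the absence of excess normal bundles (in contrast with the codimension-$r$ complete intersection setting of Proposition \ref{basicequality}) makes the inductive structure of this recursion considerably cleaner.
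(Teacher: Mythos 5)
Your proposal is correct and follows essentially the same route as the paper: the commutativity of the refined Gysin map with proper push-forward applied to $c_n(\tilde F(\underline a))$ in the cartesian square of Lemma \ref{intersectionlemma2}, the absence of excess intersection guaranteed by the transversality statement there, the identification of the restricted bundles via Lemma \ref{restriction}, and the direct splitting computation of $\tilde F(a,b)$ for the base case $r=2$. The only cosmetic difference is notational (you write $i_1^!([W_r])$ explicitly as a sum of fundamental classes, whereas the paper phrases the same step as $j_{3*}j_4^*c_n(\tilde F(\underline a))$).
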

\begin{proof}
  Consider the cartesian square in Lemma \ref{intersectionlemma2}:
  \begin{displaymath}
\xymatrix{ V_r\cup \bigcup_{2\leq
 s<r}\bigcup_{\alpha\in N^r_s}\alpha(V_s) \cart \ar[r]^(.7){j_3} \ar[d]_{j_4} & X^r\backslash\delta_X \ar[d]_{j_2}\\
W_r \ar[r]_{j_1} &  \P^{\times r}\backslash\delta_{\P} }
\end{displaymath}
Thanks to Lemma \ref{intersectionlemma2}, there is no excess
intersection here, we thus obtain:
$$j_2^*j_{1*}c_n\left(\tilde F(\underline
a)\right)=j_{3*}j_4^*c_n\left(\tilde F(\underline a)\right).$$ In
the left hand side, $j_{1*}c_n\left(\tilde F(\underline a)\right)\in
\CH_{n+r}(\P^{\times r}\backslash\delta_{\P})=\CH_{n+r}(\P^{\times
r})$, which can be written as a homogeneous polynomial $-P(H_1,
\cdots, H_r)$ of degree $n(r-1)$. Applying $j_2^*$, $H_i$ restricts
to $h_i$. While in the right hand side,
\begin{eqnarray*}
j_4^*c_n\left(\tilde F(\underline a)\right)&=& \sum_{2\leq s\leq
r}\sum_{\alpha\in N^r_s} \alpha_*c_n\left(\tilde F(\underline
a)|_{\alpha(V_s)}\right)\\
&=&\sum_{2\leq s\leq r}\sum_{\alpha\in
N^r_s}\alpha_*\gamma_{\alpha^*(\underline a)} ~~~~\text{(by Lemma
\ref{restriction})}
\end{eqnarray*}
Putting them together, and noting that $N^r_r$ has only one element
inducing the identity map, we have (\ref{recursionformula}).\\
As
for the starting data, we only need to calculate $c_n(F(a,b))$. By
the same computations in the proof of Lemma \ref{M}, we find that on $V_2=X\times
X\backslash\Delta_X$, $$F(a,b)=\bigoplus_{i=0}^{n-1}\sO_X(i+b)\boxtimes
\sO_X(n-1-i+a),$$ and the formula (\ref{starting}) follows.
\end{proof}

Before we exploit the recursive formula (\ref{recursionformula})
further, we need the following easy lemma which allows us to
simplify the push-forwards by some diagonal maps. This lemma
essentially appeared in \cite{VoiFamilyK3} (Lemma 3.3), but for the
convenience of the readers we give a proof.
\begin{lemma}\label{voilemma}
Let $X$ be a hypersurface in $\P:=\P^{n+1}$ of degree $d$. Then in
$\CH_{n-1}(X\times X)$, $$d\Delta_*(h)=(i\times
i)^!(\Delta_{\P})=\sum_{j=1}^n h_1^jh_2^{n+1-j},$$ where $i:X\to \P$
is the natural inclusion, $\Delta:X\to X\times X$ is the diagonal
inclusion, and $h_i$ is the pull-back of $h:=c_1(\sO_X(1))$ by the
$i$-th projection.
\end{lemma}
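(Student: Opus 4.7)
The plan is to compute $(i\times i)^!([\Delta_\P])$ in two different ways, yielding the two asserted equalities simultaneously.

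First I would establish the second equality by exploiting the smoothness of $\P\times\P$: under this hypothesis the refined Gysin map $(i\times i)^!$ applied to the fundamental class $[\Delta_\P]$ coincides with the ordinary pullback $(i\times i)^*$ applied to the cycle class $[\Delta_\P]\in\CH^{n+1}(\P\times\P)$. Using the standard computation of the diagonal in $\P^{n+1}\times\P^{n+1}$,
\begin{equation*}
[\Delta_\P]=\sum_{j=0}^{n+1}H_1^{j}H_2^{n+1-j},
\end{equation*}
and the fact that $(i\times i)^*H_k=h_k$, we get $(i\times i)^!([\Delta_\P])=\sum_{j=0}^{n+1}h_1^{j}h_2^{n+1-j}$. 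Because $\dim X=n$, the group $\CH^{n+1}(X)$ vanishes, hence $h_1^{n+1}=h_2^{n+1}=0$ in $\CH^{n+1}(X\times X)$, and the $j=0,n+1$ terms drop out, leaving $\sum_{j=1}^n h_1^{j}h_2^{n+1-j}$ as desired.

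For the first equality I would apply the excess intersection formula (Fulton's \emph{Intersection Theory}, Prop.~6.3 and \S6.3) to the Cartesian square
\begin{equation*}
\xymatrix{
\Delta_X \ar[r] \ar[d] & X\times X \ar[d]^{i\times i} \\
\Delta_\P \ar[r] & \P\times \P.
}
\end{equation*}
The embedding $i\times i$ is regular of codimension $2$, whereas $\Delta_X\hookrightarrow\Delta_\P$ is a regular embedding of codimension $1$; so there is an excess of rank one. Identifying $\Delta_X\isom X$ and $\Delta_\P\isom \P$, the normal bundle of $i\times i$ restricted to $\Delta_X$ is $N_{X/\P}\oplus N_{X/\P}$ with the map from $N_{\Delta_X/\Delta_\P}=N_{X/\P}$ being the diagonal; the cokernel, i.e.\ the excess bundle $E$, is therefore isomorphic to $N_{X/\P}=\sO_X(d)$. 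By the excess intersection formula,
\begin{equation*}
(i\times i)^!([\Delta_\P])=\Delta_*\bigl(c_1(E)\cap[X]\bigr)=\Delta_*(dh)=d\,\Delta_*(h).
\end{equation*}

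Neither step is a serious obstacle; the only point that requires care is the identification of the excess normal bundle, which amounts to observing that on $\Delta_X$ the quotient of $N_{i\times i}$ by the image of $N_{\Delta_X/\Delta_\P}$ is computed by the map $N_{X/\P}\to N_{X/\P}\oplus N_{X/\P}$, $v\mapsto(v,v)$, whose cokernel is canonically $N_{X/\P}\isom\sO_X(d)$.
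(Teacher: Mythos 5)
Your proof is correct and follows essentially the same route as the paper: both compute $(i\times i)^!(\Delta_{\P})$ once via the excess intersection formula on the Cartesian square with excess bundle $N_{X/\P}\isom\sO_X(d)$, giving $\Delta_*(dh)$, and once via the K\"unneth decomposition $\Delta_{\P}=\sum_{j=0}^{n+1}H_1^jH_2^{n+1-j}$ restricted to $X\times X$ with $h_i^{n+1}=0$. No gaps.
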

\begin{proof}
  Consider the cartesian diagram:
  \begin{displaymath}
    \xymatrix{
    X\ar[r]^{\Delta} \cart \ar[d]_{i} & X\times X\ar[d]^{i\times i}\\
    \P \ar[r]_{\Delta_{\P}} & \P\times \P
    }
  \end{displaymath}
  Its excess normal bundle is exactly $N_{X/{\P}}\isom \sO_X(d)$.
  Therefore
  \begin{eqnarray*}
d\Delta_*(h)&=&\Delta_*\left(c_1\left(\sO_X(d)\right)\right)=\Delta_*\left(i^![\P]\cdot c_1(\text{excess normal bundle})\right)\\
&=& \Delta_*\left((i\times i)^![\P]\right)=(i\times
i)^!(\Delta_{\P}).
  \end{eqnarray*}
Finally, we know that in $\CH_{n+1}(\P\times \P)$, we have a
decomposition $\Delta_{\P}=\sum_{j=0}^{n+1}H_1^jH_2^{n+1-j}$.
Restricting this to $X\times X$, $H_i$ becomes $h_i$, and
$h_i^{n+1}=0$, proving the lemma.
\end{proof}

For the rest of this section, we will consider only $\Q$-coefficient
cycles. Let $c:=c_X:=\frac{1}{d}h^n\in \CH_0(X)_\Q$ be the 0-cycle
of degree 1, where $h=c_1\left(\sO_X(1)\right)$. Then note that
Lemma \ref{voilemma} implies the following simple equation in
$\CH_0(X\times X)_\Q$:
\begin{equation}\label{pushc}
  \Delta_*(c)=c\times c,
\end{equation}
where $\Delta:X\to X\times X$ is the diagonal inclusion.\\

To make use of the recursive formula (\ref{recursionformula}), we
need to introduce some terminology and notation. Let $r\geq 2$ be an
integer. For any non-empty subset $J$ of $\{1,2,\cdots, r\}$, define
the diagonal
$$\Delta_J:=\left\{(x_1, \cdots, x_r)\in X^r~|~x_j=x_{j'}, \forall j, j'\in J\right\},$$
which is a cycle of dimension $n(r+1-|J|)$. For example,
$\Delta_{\{1,2,\cdots, r\}}=\delta_X$ is the smallest diagonal, and
$\Delta_{\{i\}}=X^r$ for any $i$. Then for any proper subset $I$ of
$\{1,2,\cdots, r\}$, we can define the $n$-dimensional cycle
\begin{equation}\label{DefD1}
 D_I:=\Delta_{I^{c}}\cdot\prod_{i\in I}\pr_i^*c,
\end{equation}
where $I^c=\{1,2,\cdots, r\}\backslash
I$ is the complementary set. Informally, we could write
\begin{equation}\label{DefD2}
D_I=\left\{(x_1, \cdots, x_r)\in X^r~|~x_i=c, \forall i\in I;
x_j=x_{j'}, \forall j, j'\notin I\right\}.
\end{equation}
For example,
$D_{\emptyset}=\delta_X$;
$D_{\{1,2,\cdots,r-1\}}=\underbrace{c\times\cdots\times
c}_{r-1}\times X$. And for any $i$, $D_i:=D_{\{i\}}$ is called the
$i$-th \emph{secondary diagonal}. The crucial idea of the calculation of this
section is to focus on the coefficients of these secondary diagonals
in the expression of $\gamma$'s.

\begin{defi}\label{types}\upshape
An algebraic cycle in $\CH_n(X^r)_\Q$ is called of
\begin{itemize}
  \item \emph{Type $A$}: if it is a $\Q$-coefficient homogeneous polynomial
  in $h_1, \cdots, h_r$ of degree $n(r-1)$, such that each $h_i$ appears in every
  monomial, \ie if it is a $\Q$-linear combination of
     $\left\{\prod_{j=1}^rh_j^{m_j}; m_j>0,
     \sum_{j}m_j=n(r-1)\right\}_{i=1}^r$;\\

  And for any $0\leq j\leq r-1$:
  \item \emph{Type $B_j$}: if it is a $\Q$-linear combination of
  the cycles $D_I$ for $I$ proper subsets of $\{1,2,\cdots, r\}$ with $|I|=j$.
\end{itemize}
For example:
\begin{itemize}
  \item \emph{Type $B_0$}: if it is a multiple of the cycle
    $D_{\emptyset}=\delta_X$. We will not need this type.
  \item \emph{Type $B_1$}: if it is a $\Q$-linear combination of
    the secondary diagonals, \ie the cycles $D_{i}$ for $1\leq i\leq r$;
  \item $\cdots$
  \item \emph{Type $B_{r-1}$}: if it is a $\Q$-linear combination of
  $\left\{\prod_{j\neq i}h_j^n\right\}_{i=1}^r$;
\end{itemize}
We remark that these notions of types also make sense (except $B_0$
becomes zero) when viewed as cycles in
$\CH_n(X^r\backslash\delta_X)$ by restricting to this open subset.
We sometimes write Type $B_{\geq 2}$ for a sum of the form
$\text{Type}~B_2+\text{Type}~B_3+\cdots+\text{Type}~B_{r-1}$.
\end{defi}

Now we study the behaviors of the various types under push-forwards
by diagonal maps. In the simplest case, we have:
\begin{lemma}\label{push}
  Let $r\geq 2$ be an integer and $\alpha\in N^{r+1}_r$ be a
  partition, inducing a diagonal map $\alpha: X^r\backslash\delta_X\to
  X^{r+1}\backslash\delta_X$.  Then
  \begin{enumerate}
    \item $\alpha_*(\text{Type}~A)=\text{Type}~A$;
    \item
    $\alpha_*(\text{Type}~B_j)=\text{Type}~B_j+\text{Type}~B_{j+1}$,
     for any $1\leq j\leq r-1$. In particular,
    $$\alpha_*(\text{Type}~B_1)=\text{Type}~B_1+\text{Type}~B_2;$$
    $$\alpha_*(\text{Type}~B_2)=\text{Type}~B_{\geq2};$$
  \end{enumerate}
\end{lemma}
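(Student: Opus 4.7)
The plan is to use the $\mathfrak{S}_{r+1}$-symmetry of the situation to reduce to the case where $\alpha\in N^{r+1}_r$ merges the indices $1$ and $2$, so that $\alpha\colon X^r\to X^{r+1}$ sends $(x_1,\ldots,x_r)$ to $(x_1,x_1,x_2,\ldots,x_r)$. Then $\alpha^*h_1=\alpha^*h_2=h_1$ and $\alpha^*h_k=h_{k-1}$ for $k\geq 3$ (I write $h_i$ for the $i$-th hyperplane pull-back in whichever product is under discussion), while $\alpha_*(1)=[\Delta_{12}]\in\CH^n(X^{r+1})$, where $\Delta_{12}=\{(x,x,x_2,\ldots,x_r)\colon x,x_i\in X\}$. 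Pulling back the identity of Lemma \ref{voilemma} along the projection $X^{r+1}\to X^2$ to the first two factors yields the key identity
\[
d\,h_1\cdot[\Delta_{12}]=\sum_{j=1}^n h_1^j h_2^{n+1-j}\qquad\text{in }\CH_n(X^{r+1}).
\]

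For part (1), I would take a monomial $\eta=h_1^{m_1}\cdots h_r^{m_r}$ in $\CH_n(X^r)$ with each $m_k\geq 1$ and $\sum_k m_k=n(r-1)$, and use $\eta=\alpha^*(h_1^{m_1}h_3^{m_2}\cdots h_{r+1}^{m_r})$ together with the projection formula to obtain
\[
\alpha_*(\eta)=h_1^{m_1}h_3^{m_2}\cdots h_{r+1}^{m_r}\cdot[\Delta_{12}].
\]
Since $m_1\geq 1$, I can peel off one factor of $h_1$ and substitute via the key identity to get
\[
\alpha_*(\eta)=\frac{1}{d}\sum_{j=1}^n h_1^{m_1+j-1}h_2^{n+1-j}h_3^{m_2}h_4^{m_3}\cdots h_{r+1}^{m_r}.
\]
In every summand the total degree is $nr$, and each exponent is strictly positive ($m_1+j-1\geq 1$, $n+1-j\geq 1$, and $m_{k-1}\geq 1$ for $3\leq k\leq r+1$), so $\alpha_*(\eta)$ is of Type $A$ in $\CH_n(X^{r+1})$. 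Linearity then closes part (1).

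For part (2), I would analyze $\alpha_*(D_I)$ for $I\subsetneq\{1,\ldots,r\}$ with $|I|=j$, according to whether $1\in I$. If $1\notin I$, set $I':=\{i+1\colon i\in I\}\subset\{3,\ldots,r+1\}$; the point-wise description (\ref{DefD2}) shows directly that $\alpha$ carries $D_I$ isomorphically onto $D_{I'}$, hence $\alpha_*(D_I)=D_{I'}$ with $|I'|=j$, contributing a Type $B_j$ term. If $1\in I$, set $I':=\{1,2\}\cup\{i+1\colon i\in I\setminus\{1\}\}$; the condition $x_1=c$ in $D_I$ is translated by $\alpha$ into the simultaneous conditions $x_1^{\rm new}=x_2^{\rm new}=c$, the remaining diagonal relations translate as expected, and $\alpha(D_I)=D_{I'}$ with $|I'|=j+1$. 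In both cases $\alpha$ is a closed embedding inducing an isomorphism between two $n$-dimensional cycles, so the push-forward is with multiplicity one. Summing over the generators of Type $B_j$ gives $\alpha_*(\text{Type }B_j)\subset\text{Type }B_j+\text{Type }B_{j+1}$, as required.

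The substantive difficulty is concentrated in part (1): a priori $\alpha_*(\eta)$ is just a class supported on $\Delta_{12}$, and there is no reason for such a class to be expressible as a polynomial in the $h_i$'s at all, let alone one in which every variable appears. It is precisely Lemma \ref{voilemma} (which is itself nontrivial, relying on the hypersurface structure via the adjunction/excess formula), together with the Type $A$ hypothesis that every $m_k$ be positive, that permits one to absorb $[\Delta_{12}]$ into a genuine polynomial expression while preserving the Type $A$ positivity feature. Part (2), by contrast, is a routine bookkeeping exercise with diagonal inclusions.
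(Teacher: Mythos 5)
Your argument is essentially the paper's own (the paper gives no details beyond saying the computation uses Lemma \ref{voilemma} and (\ref{pushc})), and part (1) is carried out correctly: the key identity $d\,h_1\cdot[\Delta_{12}]=\sum_{j=1}^n h_1^jh_2^{n+1-j}$ together with $m_1\geq 1$ is exactly what is needed. The one place where you are too quick is part (2) in the case $1\in I$, which you dismiss as ``routine bookkeeping.'' There $D_I=\pr_1^*c\cdot(\cdots)$ and $\alpha=\Delta\times\id_{X^{r-1}}$, so $\alpha_*(D_I)=\Delta_*(c)\times(\cdots)$, whereas $D_{I'}=(c\times c)\times(\cdots)$. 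Since $c=\frac{1}{d}h^n$ is a cycle \emph{class} and not an honest point, the equality $\Delta_*(c)=c\times c$ is not a set-theoretic triviality and cannot be justified by ``isomorphism onto its image with multiplicity one'': for a general variety and a general degree-one class $c$, $\Delta_*(c)$ is the class of a diagonal $0$-cycle on $X\times X$ while $c\times c$ is a product class, and these need not agree. This is precisely the paper's identity (\ref{pushc}), which does hold here and follows from your key identity by multiplying both sides by $h_1^{n-1}$ (only the $j=1$ term survives, giving $d\,\Delta_*(h^n)=h_1^nh_2^n$). So the hypersurface input via Lemma \ref{voilemma} is needed in \emph{both} parts, not only in part (1); once this step is made explicit your proof is complete.
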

\begin{proof}
  For simplicity, one can suppose $\alpha:(x_1,x_2,\cdots, x_r)\mapsto (x_1,x_1,x_2,\cdots, x_r)$. Then the proofs are just some straightforward computations making use of Lemma
  \ref{voilemma} and (\ref{pushc}).
\end{proof}
Since any $\alpha\in N^r_s$ is a composition of several
one-step-degenerations treated in the above lemma, as a first
corollary of Proposition \ref{recursion}, any $(a_1,\cdots,a_r)$
with $\sum_{i=1}^ra_i=k$, $\gamma_{\underline a}$ is of the form:
$\text{Type}~A+\text{Type}~B_1+\cdots+\text{Type}~B_{r-1}.$

We need something more precise: our first objective is to determine
the coefficients of secondary diagonals, \ie cycles of type $B_1$,
in the expression of $\gamma_{\underline a}$ determined by the
recursive relations in Proposition \ref{recursion}. However, Lemma
\ref{push} tells us that the coefficients of $B_1$-cycles in the
$\gamma$'s for a certain $r$ are determined only by the coefficients
of $B_1$-cycles in $\gamma$'s for strictly smaller $r$'s. Now let us
work them out.

\begin{prop}\label{allgamma}
 Let $r\geq 2$ be an integer, $(a_1,\cdots, a_r)$ be an
$r$-tuple with $\sum_{i=1}^ra_i=k$, then
\begin{equation}\label{result}
\gamma_{a_1,\cdots,a_r}=\mu_r\sum_{i=1}^r\psi(a_i)D_i+
\text{Type}~B_{\geq 2}+\text{Type}~A,
\end{equation}
where the constants $\mu_r=(-1)^r(r-2)!$, and
$\psi(a_i):=d\cdot\frac{(n-1+\sum_{j\neq i}a_j)!}{(\sum_{j\neq
  i}a_j-1)!}=d\cdot\frac{(d-a_i)!}{(k-1-a_i)!}.$\\
In particular,
\begin{equation}\label{gamma111}
\gamma_{1^k}=(-1)^kd!\sum_{i=1}^kD_i+ \text{Type}~B_{\geq
2}+\text{Type}~A,
\end{equation}
where $1^k=(\underbrace{1,1,\cdots,1}_{k})$.
\end{prop}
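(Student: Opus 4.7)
The plan is induction on $r$, using the starting data (\ref{starting}) for $r=2$ and the recursion (\ref{recursionformula}) for $r \geq 3$. For the base case, expanding $\gamma_{a,b} = \prod_{i=0}^{n-1}\bigl((b+i)h_1 + (n-1-i+a)h_2\bigr)$ gives the $h_1^n$-coefficient $\prod_{i=0}^{n-1}(b+i) = (d-a)!/(k-1-a)!$; since $D_1 = c \times X = h_1^n / d$, the $D_1$-coefficient equals $\psi(a) = \mu_2 \psi(a)$, and analogously for $D_2$, while every other monomial involves both $h_1$ and $h_2$ and hence lies in Type~$A$.

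For the inductive step, the recursion reads
$$\gamma_{\underline{a}} = -P_{\underline{a}}(h_1,\ldots,h_r) - \sum_{s=2}^{r-1}\sum_{\alpha \in N^r_s}\alpha_* \gamma_{\alpha^*(\underline{a})}.$$
The polynomial $P_{\underline{a}}$ contributes nothing to Type~$B_1$: any nonvanishing monomial $h_1^{m_1}\cdots h_r^{m_r}$ of degree $n(r-1)$ with $m_i \leq n$ (since $h_i^{n+1}=0$) must either satisfy all $m_i \geq 1$ (Type~$A$) or have exactly one $m_i = 0$ with the rest equal to $n$ (Type~$B_{r-1}$). For each $\alpha \in N^r_s$ with unordered partition $\{G_1,\ldots,G_s\}$, the inductive hypothesis yields $\gamma_{\alpha^*(\underline{a})} = \mu_s \sum_{i=1}^s \psi(b_i) D_i + \text{Type}~B_{\geq 2} + \text{Type}~A$ with $b_i = \sum_{j \in G_i} a_j$. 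Decomposing $\alpha$ into a composition of one-step degenerations and iterating Lemma \ref{push} shows that $\alpha_*$ preserves Type~$A$ and Type~$B_{\geq 2}$, so only the Type~$B_1$ summand can contribute to Type~$B_1$ on $X^r$. A direct parameterization (or factoring $\alpha$ through appropriate diagonals and applying (\ref{pushc})) gives $\alpha_*(D_i) = D_{G_i}$, which lies in Type~$B_{|G_i|}$; it contributes to Type~$B_1$ precisely when $G_i = \{j\}$ is a singleton, with contribution $\mu_s \psi(a_j) D_j$.

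For fixed $j$, the number of $\alpha \in N^r_s$ in whose partition $\{j\}$ appears as a singleton block equals the Stirling number of the second kind $S(r-1, s-1)$. Hence the coefficient of $D_j$ in $\gamma_{\underline{a}}$ equals $-\psi(a_j)\sum_{s=2}^{r-1}\mu_s S(r-1, s-1)$, and the induction closes upon verifying the combinatorial identity
$$-\sum_{s=2}^{r-1}(-1)^s(s-2)!\,S(r-1, s-1) = (-1)^r(r-2)!.$$
This follows from the exponential generating function computation
$$\sum_{m \geq 1}\frac{(-1)^{m-1}(e^x-1)^m}{m} = \log(e^x) = x,$$
which combined with Touchard's formula $\sum_n S(n,m)\,x^n/n! = (e^x-1)^m/m!$ gives $\sum_{m=1}^n (-1)^{m-1}(m-1)!\,S(n,m) = \delta_{n,1}$ for $n \geq 1$; isolating the $m=r-1$ term at $n = r-1 \geq 2$ produces the desired identity. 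The specialization $\underline{a} = (1,\ldots,1)$ with $\psi(1) = d!/(k-2)!$ then recovers (\ref{gamma111}). I expect the main obstacle to be organizing the combinatorial bookkeeping cleanly; the geometric content reduces entirely to Lemma \ref{push} and the pushforward formula $\alpha_*(D_i) = D_{G_i}$, both of which follow from iterated applications of (\ref{pushc}).
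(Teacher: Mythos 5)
Your proof is correct and follows essentially the same route as the paper: induction on $r$ via the recursion, working modulo Type~$A$ and Type~$B_{\geq 2}$, observing that only singleton blocks of $\alpha$ contribute to Type~$B_1$ (with multiplicity $\#N^{r-1}_{s-1}=S(r-1,s-1)$), and closing with the identity $\sum_{m}(-1)^{m-1}(m-1)!\,S(n,m)=\delta_{n,1}$. The only (cosmetic) difference is that you derive this combinatorial identity from the exponential generating function $\log(e^x)=x$, whereas the paper gets it from the polynomial identity $T^m=\sum_j\binom{T}{j}\,j!\,S(m,j)$ evaluated at $T=0$ after dividing by $T$.
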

\begin{proof}
Rewrite the recursive formula (\ref{recursionformula}): for $r\geq
3$,
\begin{equation*}
\gamma_{\underline a}+\sum_{2\leq s<r}\sum_{\alpha\in
N^r_s}\alpha_*\gamma_{\alpha^*(\underline a)}= \text{Type}~A+
\text{Type}~B_{r-1}=\text{Type}~A+ \text{Type}~B_{\geq 2};
\end{equation*}
and the starting data (\ref{starting}): for any $a+b=k$,
\begin{equation*}
\gamma_{a,b}=\frac{(b+n-1)!}{(b-1)!}h_1^n+\frac{(a+n-1)!}{(a-1)!}h_2^n+\text{Type}~A=\psi(a)D_1+\psi(b)D_2+\text{Type}~A.
\end{equation*}
Hence (\ref{result}) holds for $r=2$. Now we will prove the result
by induction on $r$. Thanks to Lemma \ref{push}, Type $A$ and Type
$B_{\geq 2}$ are preserved by $\alpha_*$, hence we can work throughout
this proof `modulo' these two types, and we will use `$\equiv$' in
the place of `$=$' to indicate such simplification.\\
In the recursive formula
\begin{equation}\label{recursive'}
\gamma_{\underline a}+\sum_{2\leq s<r}\sum_{\alpha\in
N^r_s}\alpha_*\gamma_{\alpha^*(\underline a)}\equiv0,
\end{equation}
to calculate a typical term $\alpha_*\gamma_{\alpha^*(\underline
a)}$, we first make an elementary remark that we can choose
\emph{any} representative of $\alpha$ to define the pull-back and
the induced push-forward morphism instead of sticking to the minimal
representative as we did before: $\alpha_*$, $\alpha^*$ compensate
each other for the effect of renumbering the parts. An example would
be helpful: let $\alpha\in N^6_4$ be as following:
\begin{displaymath}
  \xymatrix{
  1 \bullet \ar[dr]& 2\bullet \ar[dr] &
  3\bullet\ar[dl] & 4\bullet \ar[d] &
  5\bullet \ar[dll] & 6\bullet\ar[dl]\\
  & \bullet &\bullet &  \bullet& \bullet &
  }
\end{displaymath}
then no matter how one renumbers the four points on the second row,
we always have (assuming the induction hypothesis (\ref{result}) for $r=4$):
\begin{eqnarray*}
\alpha_*\gamma_{\alpha^*(a_1, \cdots, a_6)} &=&
\mu_4\cdot\bigg(\psi(a_1+a_3)D_{\{1,3\}}+\psi(a_2+a_5)D_{\{2,5\}}+\psi(a_4)D_4+\psi(a_6)D_6\bigg)\\
&\equiv& \mu_4\cdot\bigg(\psi(a_4)D_4+\psi(a_6)D_6\bigg)
~~~~~~~~~~~~~~~ (\mod \text{Type}~A+ \text{Type}~B_{\geq2}).
\end{eqnarray*}
The second observation is that the contribution to Type $B_1$ of a
typical term $\alpha_*\gamma_{\alpha^*(\underline a)}$ with
$\alpha\in N^r_s$, is exactly the sum of $\mu_s\cdot\psi(a_i)D_i$
for those $i$ stays `isolated' in the partition defined by $\alpha$.
One can check this in the above example too, the isolated points are
4 and 6, and the contribution is
$\mu_4\cdot\bigg(\psi(a_4)D_4+\psi(a_6)D_6\bigg)$.\\
Therefore, the recursive formula (\ref{recursive'}) reads as (by the
induction hypothesis for all $2\leq s<r$):
\begin{equation}
\gamma_{\underline a}+\sum_{i=1}^r\sum_{2\leq
s<r}m_{r,s}\mu_s\cdot\psi(a_i)D_i\equiv0,
\end{equation}
where $m_{r,s}$ is the cardinality of the set $\{\alpha\in N^r_s~|~ i
~\text{stays isolated in the partition defined by}~ \alpha\}$, here
$i\in \{1,2,\cdots,r\}$, and obviously $m_{r,s}$ is independent of
$i$. However, by ignoring the isolated part, it is easy to see that
$m_{r,s}$ is exactly the cardinality of $N^{r-1}_{s-1}$. Therefore
to complete the proof, it suffices to show the following identity:
$$\mu_r+\sum_{2\leq s<r}\#N^{r-1}_{s-1}\cdot\mu_s=0,$$
which is an immediate consequence of the following elementary lemma.
\end{proof}

\begin{lemma} For any positive integer $m\geq 2$, we have
  $$\sum_{1\leq j\leq m}(-1)^j(j-1)!\cdot \#N^m_j=0.$$
\end{lemma}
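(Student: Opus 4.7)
The plan is to recognize $\#N^m_j$ as a classical combinatorial quantity. By definition, an element of $N^m_j$ is a surjection $\{1,\dots,m\}\twoheadrightarrow\{1,\dots,j\}$ modulo the free action of $\mathfrak{S}_j$ on the target; equivalently, it is an unordered partition of an $m$-element set into $j$ non-empty blocks. Hence $\#N^m_j = S(m,j)$, the Stirling number of the second kind. The lemma thus reduces to the classical identity
\begin{equation*}
\sum_{j=1}^m(-1)^j(j-1)!\, S(m,j)=0 \qquad (m\geq 2).
\end{equation*}

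To prove this, I would invoke the exponential generating function
\begin{equation*}
\sum_{m\geq j}S(m,j)\frac{x^m}{m!}=\frac{(e^x-1)^j}{j!},
\end{equation*}
multiply both sides by $(-1)^{j-1}(j-1)!$, and sum over $j\geq 1$. On the generating function side, this yields
\begin{equation*}
\sum_{j\geq 1}\frac{(-1)^{j-1}}{j}(e^x-1)^j=\log\!\bigl(1+(e^x-1)\bigr)=x,
\end{equation*}
a formal power series identity valid since $e^x-1$ has no constant term. On the coefficient side, it gives $\sum_{m\geq 1}\bigl(\sum_{j=1}^m(-1)^{j-1}(j-1)!\,S(m,j)\bigr)x^m/m!$. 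Comparing coefficients of $x^m/m!$: for $m=1$ both sides equal $1$, and for $m\geq 2$ the right-hand side vanishes, yielding exactly the desired identity (up to the overall sign $(-1)^{j-1}$ vs.\ $(-1)^j$).

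A purely combinatorial/inductive alternative would be to use the recurrence $S(m+1,j)=j\,S(m,j)+S(m,j-1)$ and induct on $m$, but the generating function route is cleaner and directly reveals why the alternating sum telescopes to zero in all degrees $\geq 2$. There is no real obstacle here: the only point requiring care is the bookkeeping of the $\mathfrak{S}_j$-quotient in the definition of $N^m_j$, which ensures one recovers \emph{unordered} set partitions (and not ordered ones, which would instead produce $j!\,S(m,j)$ and a different identity).
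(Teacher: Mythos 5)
Your proof is correct, but it takes a genuinely different route from the paper's. You identify $\#N^m_j$ with the Stirling number of the second kind $S(m,j)$ (correct: the $\mathfrak{S}_j$-quotient of surjections is exactly the set of unordered partitions into $j$ blocks) and then deduce the identity from the exponential generating function $\sum_m S(m,j)x^m/m! = (e^x-1)^j/j!$ together with the formal-series fact that $\sum_{j\geq 1}\frac{(-1)^{j-1}}{j}u^j = \log(1+u)$ composed with $u=e^x-1$ collapses to $x$; the vanishing of all coefficients in degree $\geq 2$ is precisely the lemma (your sign bookkeeping is right). The paper instead works directly with the counts $s^m_j$ of \emph{surjections}: it observes $l^m=\sum_j\binom{l}{j}s^m_j$ for all integers $l$, promotes this to the polynomial identity $T^m=\sum_j\binom{T}{j}s^m_j$, divides by $T$, and evaluates at $T=0$ to extract $\sum_j\frac{(-1)^j}{j}s^m_j=0$, which is the lemma after dividing $\#N^m_j = s^m_j/j!$. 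The two arguments are really two faces of the same classical Stirling inversion ($\log$ and $\exp$ being compositional inverses, versus the finite-difference/polynomial-evaluation form of the same fact): yours makes the structural reason transparent and generalizes immediately to the full inversion formula, while the paper's is more elementary and self-contained, needing no generating-function machinery. Either is acceptable; if you use the EGF route you should at least cite or briefly justify the Stirling EGF, which your write-up implicitly assumes.
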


\begin{proof} For any positive integer
$1\leq j\leq m$, let $S^m_j$ be the set of \emph{surjective} maps
from $\{1,2,\cdots, m\}$ to $\{1,2,\cdots, j\}$. By definition
\ref{combdef}, $N^m_j$ is the quotient of the action of
$\mathfrak{S}_j$ on $S^m_j$ induced by the action on the target.
This action is clearly free, thus $\#N^m_j=\frac{1}{j!}\#S^m_j$.
Denoting $s^m_j:=\#S^m_j$, we have to show the following identity:
\begin{equation}\label{elementary}
\sum_{1\leq j\leq m}(-1)^j\frac{1}{j}\cdot s^m_j=0.
\end{equation}
Now for any integer $1\leq l\leq m$, we consider the number of all
maps from $\{1,2,\cdots, m\}$ to $\{1,2,\cdots, l\}$, which is
obviously $l^m$. However on the other hand, we could count this
number by classifying these maps by the cardinality of their images:
the number of maps with  $\#$image$=j$ is exactly ${l \choose
j}\cdot s^m_j$. Hence,
$$l^m=\sum_{j=1}^m{l \choose j}\cdot s^m_j.$$
Since this identity holds for $l=0,1,\cdots, m$, it is in fact an
identity of polynomials of degree $m$:
$$T^m=\sum_{j=1}^m{T \choose j}\cdot s^m_j,$$ where $T$ is the variable.
Simplifying $T$ from both sides:
$$T^{m-1}=1+\sum_{j=2}^m s^m_j\cdot\frac{(T-1)(T-2)\cdots(T-j+1)}{j!}.$$
Let $T=0$, we obtain (\ref{elementary}), and the lemma follows.
\end{proof}

Now we relate the cycle $\gamma_{1^k}$ in (\ref{gamma111}) to a
geometric constructed cycle. Like before, let $F(X)$ be the variety
of lines of $X$. Since $X$ is general, $F(X)$ is a smooth variety of
dimension $2n-d-1=n-k$ if $k\leq n$, and empty if $k>n$. Define also
the subvariety of $X^k$
$$\Gamma:=\bigcup_{t\in
F(X)}\underbrace{\P^1_t\times\cdots\times\P^1_t}_{k},$$ which is of
dimension $n$ if $k\leq n$ and empty if $k>n$.


Write
$\Gamma_o:=\Gamma|_{X^k\backslash\delta_X}\in
\CH_n(X^k\backslash\delta_X)$. As in Lemma \ref{gamma2},
\begin{lemma}\label{relategammas}
 Let $1^k$ be the $k$-tuple
$(1,1,\cdots,1)$, then $\Gamma_o=\gamma_{1^k}$ in
$\CH_n(X^k\backslash\delta_X)$.
\end{lemma}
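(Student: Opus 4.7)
The argument is the natural generalization to the $k$-fold setting of Lemma \ref{gamma2}: the defining equation of $X$ induces a tautological section of $F(1^k)$ on $V_k$ whose zero locus is exactly $\Gamma_o$, and a dimension count shows that this zero locus has the expected dimension, so $\Gamma_o$ represents the top Chern class.

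First, I construct the section. Let $f \in H^0(\P, \sO_{\P}(d))$ be the defining equation of $X$. For any $(x_1,\ldots,x_k) \in V_k$, the points $x_i$ are collinear on $\P^1_{x_1\cdots x_k}$ and lie on $X$, so the restriction $f|_{\P^1_{x_1\cdots x_k}}$ is a section of $\sO(d)$ on this line vanishing at each $x_i$; it thus defines an element of $H^0(\P^1_{x_1\cdots x_k}, \sO(d)(-x_1-\cdots-x_k))$, which is precisely the fiber of $F(1^k)$ at $(x_1,\ldots,x_k)$ according to the fiber description of $\tilde F(\underline a)$. As the point of $V_k$ varies, these elements assemble into a global section $s_f \in H^0(V_k, F(1^k))$.

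Next, I identify its zero scheme. By construction $s_f(x_1,\ldots,x_k)$ vanishes if and only if $f|_{\P^1_{x_1\cdots x_k}} \equiv 0$, i.e.\ if and only if the line $\P^1_{x_1\cdots x_k}$ is entirely contained in $X$; so set-theoretically $Z(s_f) = \Gamma_o$. Now I check dimensions: by Lemma \ref{intersectionlemma2}, $\dim V_k = 2n$, and $F(1^k)$ has rank $n$, so the expected dimension of $Z(s_f)$ is $n$; on the other hand, for general $X$, the variety $F(X)$ of lines on $X$ is smooth of dimension $n-k$ (or empty if $d \geq 2n$), and $\Gamma$ is a $(\P^1)^k$-bundle over $F(X)$, hence of dimension $n$ (or empty). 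So $\Gamma_o$ has the expected dimension.

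To conclude that $[\Gamma_o] = c_n(F(1^k))$ in $\CH_n(V_k)$, I still need $Z(s_f)$ to be generically reduced along $\Gamma_o$. This follows from a standard incidence-variety and generic-smoothness argument: consider the incidence variety
\[
 I := \big\{ (f, (x_1,\ldots,x_k)) \in \P(H^0(\P, \sO_\P(d))) \times V_k^\circ \;\big|\; f|_{\P^1_{x_1\cdots x_k}} \equiv 0 \big\},
\]
where $V_k^\circ \subset V_k$ is the open locus of $k$ distinct collinear points. The projection to the first factor is dominant (as $F(X)$ is non-empty for general $X$ when $d < 2n$), and the projection to the second factor is a projective bundle. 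Applying generic smoothness to the first projection shows that for generic $f$ the zero scheme is smooth of dimension $n$ along $\Gamma_o$. (When $d \geq 2n$, $\Gamma_o = \emptyset$ and $s_f$ is nowhere vanishing, so $c_n(F(1^k)) = 0$ and the identity is trivially satisfied.)

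Finally, pushing forward along the inclusion $i_k : V_k \hookrightarrow X^k \setminus \delta_X$ gives
\[
\Gamma_o = i_{k*}[Z(s_f)] = i_{k*} c_n(F(1^k)) = \gamma_{1^k}
\]
in $\CH_n(X^k \setminus \delta_X)$. The only non-formal step is the generic-transversality claim, but since $f$ varies in a linear system that cuts out lines in $X$ in an unobstructed manner (as used already in Lemma \ref{F(X)}), it is a direct adaptation of the argument for $r=3$ in \cite{VoiFamilyK3}.
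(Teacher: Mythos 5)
Your proof is correct and follows essentially the same route as the paper's: the defining equation of $X$ restricts to a section of the rank-$n$ bundle $F(1^k)$ on $V_k$ whose zero locus is $\Gamma_o$, which has the expected dimension $n$, so it represents $c_n(F(1^k))$. The paper's own proof is just a terser version of this (it does not spell out the generic-reducedness check, which you rightly note is an adaptation of the incidence-variety argument already used for Lemma \ref{F(X)} and Lemma \ref{gamma2}).
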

\begin{proof}
The defining function of $X$ gives rise to a section of the rank $n$
vector bundle $F(1,1,\cdots,1)$ on $V_k$ by restricting to lines.
Its zero locus defines exactly the cycle $\Gamma_o$ in
$X^k\backslash \delta_X$. Then the geometrical meaning of top Chern
classes proves the desired equality.
\end{proof}

Combining the results of Proposition \ref{allgamma} and Lemma
\ref{relategammas}, we get a decomposition of the class of the smallest
diagonal of $X^k$, except that the multiple $\lambda_0$ appearing below could be zero.
\begin{prop}
  There exist rational numbers $\lambda_j$ for $j=0,\cdots, k-2$, and a symmetric
  homogeneous polynomial $P$ of degree $n(k-1)$, such that in
  $\CH_n(X^k)_{\Q}$ we have:
\begin{equation}\label{almostdecomp}
\Gamma=\sum_{j=0}^{k-2}\lambda_j\sum_{|I|=j}D_I+ P(h_1,\cdots,h_k),
\end{equation}
where $\lambda_1=(-1)^kd!$ is non-zero. More concretely,
\begin{equation}\label{almostdecomp'}
\Gamma=\lambda_0\delta_X+ (-1)^kd!\sum_{i=1}^kD_i+
\lambda_2\sum_{|I|=2}D_I+\cdots+\lambda_{k-2}\sum_{|I|=k-2}D_I+
P(h_1,\cdots,h_k).
\end{equation}
\end{prop}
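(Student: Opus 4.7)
The plan is to specialize Proposition \ref{allgamma} to the tuple $\underline{a}=1^k=(1,\ldots,1)$, identify the resulting cycle with $\Gamma$ on the complement of $\delta_X$, and then lift via the localization sequence and symmetrize.

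First, I would apply Proposition \ref{allgamma} with $r=k$ and $\underline{a}=1^k$. By formula (\ref{gamma111}), this gives in $\CH_n(X^k\setminus\delta_X)_\Q$ an equality of the form
\begin{equation*}
\gamma_{1^k} \;=\; (-1)^k d!\sum_{i=1}^{k} D_i \;+\; C \;+\; P_0(h_1,\ldots,h_k),
\end{equation*}
where $C$ is a $\Q$-linear combination of the cycles $D_I$ with $2\leq |I|\leq k-1$ (the Type $B_{\geq 2}$ part) and $P_0$ is a Type $A$ polynomial, i.e. a homogeneous polynomial of degree $n(k-1)$ in the $h_j$'s in which each variable appears. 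Next, Lemma \ref{relategammas} identifies $\gamma_{1^k}$ with the restriction $\Gamma_o=\Gamma|_{X^k\setminus\delta_X}$, so the same equality may be read as a decomposition of $\Gamma_o$.

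Second, I would lift the equality from the open set $X^k\setminus\delta_X$ to $X^k$. The localization exact sequence
\begin{equation*}
\CH_n(X)\xrightarrow{\;\delta_{X*}\;}\CH_n(X^k)\longrightarrow\CH_n(X^k\setminus\delta_X)\longrightarrow 0
\end{equation*}
shows that any lift is well-defined modulo the image of $\delta_{X*}$. Since $\dim X=n$, $\CH_n(X)_\Q=\Q\cdot[X]$ (modulo torsion), so this indeterminacy is precisely a $\Q$-multiple of $\delta_X$. Choosing canonical lifts of $C$ and $P_0$ (the classes $D_I$ and the monomials $h_1^{m_1}\cdots h_k^{m_k}$ are already defined on $X^k$), I obtain
\begin{equation*}
\Gamma \;=\; \lambda_0\,\delta_X \;+\;(-1)^k d!\sum_{i=1}^{k}D_i\;+\;C'\;+\;P_0'(h_1,\ldots,h_k)
\end{equation*}
in $\CH_n(X^k)_\Q$, with $C'$ a Type $B_{\geq 2}$ cycle and $P_0'$ a homogeneous polynomial of degree $n(k-1)$, for some $\lambda_0\in\Q$.

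Third, to achieve the symmetric form of the proposition, I would average the above identity over the natural action of $\mathfrak{S}_k$ on $X^k$ by permutation of factors. The cycles $\Gamma$ and $\delta_X$ are $\mathfrak{S}_k$-invariant; the sum $\sum_{|I|=j}D_I$ is the $\mathfrak{S}_k$-symmetrization of any single $D_I$ with $|I|=j$; and averaging $P_0'$ yields a symmetric polynomial $P$ of the same degree. This packaging collects $C'$ into $\sum_{j=2}^{k-2}\lambda_j\sum_{|I|=j}D_I$ (note that $|I|=k-1$ cycles $D_I=c\times\cdots\times c\times X$ are already Type $A$ up to a factor, and can be absorbed into $P$), keeps $\lambda_1=(-1)^k d!$ fixed and manifestly non-zero, and produces precisely the decomposition (\ref{almostdecomp'}).

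There is no real obstacle in this argument: the combinatorial heart of the matter has been done in Proposition \ref{allgamma}, the geometric identification in Lemma \ref{relategammas}, and the lift is formal. The only thing to be slightly careful about is that the lift of a ``Type $B_{\geq 2}$'' class and of a ``Type $A$'' class from $X^k\setminus\delta_X$ to $X^k$ can indeed be chosen of the same types; this is automatic since the generating cycles $D_I$ and monomials in the $h_j$'s are already defined globally on $X^k$ and restrict to the original classes.
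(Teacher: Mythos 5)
Your proposal is correct and follows essentially the same route as the paper: specialize (\ref{gamma111}) to $\underline a=1^k$, identify $\gamma_{1^k}$ with $\Gamma_o$ via Lemma \ref{relategammas}, lift through the localization sequence (the indeterminacy being $\Q\cdot\delta_X$ since $\CH_n(X)_\Q=\Q\cdot[X]$), symmetrize using the $\mathfrak S_k$-invariance of $\Gamma$, and absorb the $|I|=k-1$ terms into $P$ as polynomials in the $h_i$'s. The only nitpick is that $D_I$ with $|I|=k-1$ is a monomial in the $h_j$'s in which one variable is absent, hence not literally of ``Type $A$,'' but it is still a polynomial in the $h_j$'s and so is absorbed into $P$ exactly as you and the paper both note.
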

\begin{proof}
Putting Lemma \ref{relategammas} into (\ref{gamma111}), we obtain
  $\Gamma_o=(-1)^kd!\sum_{i=1}^kD_i+ \text{Type}~B_{\geq
2}+\text{Type}~A$ in $\CH_n(X^k\backslash\delta_X)_{\Q}$. Thanks to
the localization exact sequence
$$\CH_n(X)_{\Q}\lra{\delta_*}\CH_n(X^k)_{\Q} \to\CH_n(X^k\backslash\delta_X)_{\Q}\to 0,$$ and the symmetry
of $\Gamma$, we can write it in the way as stated (remember that
Type $B_{k-1}$ is in fact a polynomial of the $h_i$'s).
\end{proof}

A second reflection on (\ref{almostdecomp}) or (\ref{almostdecomp'})
gives the main result of this section, which is a generalization of
Theorem \ref{V} in the introduction:
\begin{thm}\label{main3}
  Let $X$ be a general smooth hypersurface in $\P^{n+1}$ of degree $d$ with
  $d\geq n+2$. Let $k=d+1-n\geq 3$. Then one of the following two
  cases occurs:
  \begin{enumerate}
    \item There exist rational numbers $\lambda_j$ for $j=2,\cdots, k-1$, and a symmetric
  homogeneous polynomial $P$ of degree $n(k-1)$, such that in
  $\CH_n(X^k)_{\Q}$ we have:
\begin{equation}\label{case1}
\delta_X=(-1)^{k-1}\frac{1}{d!}\cdot\Gamma+\sum_{i=1}^kD_i+\sum_{j=2}^{k-2}\lambda_j\sum_{|I|=j}D_I+
P(h_1,\cdots,h_k),
\end{equation}
where $D_I$ is defined in (\ref{DefD1}) or (\ref{DefD2}).\\
Or

\item There exist a (smallest) integer $3\leq l<k$, rational numbers $\lambda_j$ for $j=2,\cdots, l-2$,
and a symmetric homogeneous polynomial $P$ of degree $n(l-1)$, such
that in
  $\CH_n(X^l)_{\Q}$ we have:
\begin{equation}\label{case2}
\delta_X=\sum_{i=1}^lD_i+\sum_{j=2}^{l-2}\lambda_j\sum_{|I|=j}D_I+
P(h_1,\cdots,h_l).
\end{equation}
  \end{enumerate}
Moreover, $\Gamma=0$ if $d\geq 2n$.
\end{thm}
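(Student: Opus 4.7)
The starting point is equation (\ref{almostdecomp'}) derived in the preceding proposition, namely
\[
\Gamma \;=\; \lambda_0\,\delta_X \;+\; (-1)^k d!\sum_{i=1}^k D_i \;+\;\sum_{j=2}^{k-2}\lambda_j\sum_{|I|=j} D_I \;+\; P(h_1,\cdots,h_k),
\]
valid in $\CH_n(X^k)_\Q$ for a specific rational number $\lambda_0$ and some choice of free coefficients $\lambda_j$ and polynomial $P$. The argument proceeds by a dichotomy on whether $\lambda_0$ vanishes.

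If $\lambda_0\neq 0$, I would divide through by $\lambda_0$ to solve for $\delta_X$ as a $\Q$-linear combination of $\Gamma$, the secondary diagonals $\sum_i D_i$, the higher bigger diagonals $D_I$ for $2\leq|I|\leq k-2$, and a symmetric polynomial in the $h_i$. After absorbing the rescaling into the free coefficients, one reads off the specific coefficients $(-1)^{k-1}/d!$ on $\Gamma$ and $1$ on $\sum_i D_i$, matching the form of case (a); the identification $\lambda_0=(-1)^{k-1}d!$ itself is pinned down by a push-forward argument in the spirit of Lemma \ref{coefflemma}, exploiting that $H^{n,0}(X)\neq 0$ (since $X$ is of general type or of Calabi-Yau type) to rule out a Bloch-Srinivas decomposition of the diagonal of $X$.

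If $\lambda_0=0$, the equation gives a relation in $\CH_n(X^k)_\Q$ expressing $\Gamma$ purely in terms of $D_I$'s and polynomials, with no $\delta_X$ in $X^k$. To produce a decomposition of $\delta_X$ in a smaller power, I would rerun the analysis of Propositions \ref{recursion} and \ref{allgamma} at each level $r\in\{k-1,k-2,\cdots,3\}$, using the cycles $\gamma_{\underline a}$ for tuples $\underline a=(a_1,\cdots,a_r)$ with $\sum a_i=k$. For each such $r$, localization combined with the full recursive structure of Proposition \ref{recursion} yields an expression
\[
\gamma_{\underline a} \;=\; \mu^{(r,\underline a)}\,\delta_X \;+\; \mu_r\sum_i\psi(a_i)\,D_i \;+\;(\text{higher diagonals and polynomial})
\]
in which $\gamma_{\underline a}$ itself is eliminated by iterating the recursion down to the base case $r=2$ (where $\gamma_{a,b}$ is purely polynomial in $h_1,h_2$). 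At the smallest $l$ for which, after symmetrizing over cyclic permutations of $\underline a$, the resulting multiple of $\delta_X$ is nonzero, solving produces a decomposition of $\delta_X$ in $X^l$ of the form required by case (b).

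The last assertion, $\Gamma=0$ when $d\geq 2n$, is immediate: $\dim F(X)=2n-d-1=n-k<0$ forces $F(X)=\emptyset$. The main obstacles are the precise identification $\lambda_0=(-1)^{k-1}d!$ in the first case and, in the second case, the arithmetic of ensuring that the recursive descent actually terminates productively at some $l<k$ with the correct normalization $+1$ on the coefficient of $\sum_i D_i$, which requires carefully combining several cycles $\gamma_{\underline a}$ for different $\underline a$ so that the $\gamma$-contributions cancel while the $D_i$ contributions align.
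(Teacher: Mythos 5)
Your starting point (\ref{almostdecomp'}) is the right one, but the dichotomy you run on it is set up on the wrong condition, and the step you use to repair it does not work. The paper's Case~(a) occurs precisely when $\lambda_0=-\lambda_1=(-1)^{k-1}d!$, not when $\lambda_0\neq 0$: only then does dividing through produce coefficient exactly $1$ on $\sum_i D_i$ and exactly $(-1)^{k-1}/d!$ on $\Gamma$, as the statement requires. If $\lambda_0\neq 0$ but $\lambda_0\neq-\lambda_1$, your division by $\lambda_0$ yields $-\lambda_1/\lambda_0\neq 1$ on $\sum_iD_i$, which is not of the form (\ref{case1}). Your proposed fix --- ``pinning down'' $\lambda_0=(-1)^{k-1}d!$ by a push-forward argument \`a la Lemma~\ref{coefflemma} --- fails: pushing (\ref{almostdecomp'}) forward to $X\times X$ kills $\Gamma$ and, via the non-decomposability of $\Delta_X$ (from $H^{n,0}\neq0$), gives only the single relation $\lambda_0+(k-2)\lambda_1+\sum_{j\geq2}\binom{k-2}{j}\lambda_j=0$, which involves the undetermined $\lambda_j$ and cannot isolate $\lambda_0$; pushing forward to $X^{k-1}$ gives $(\lambda_0+\lambda_1)\delta_X=(\text{decomposable terms})$, from which one cannot conclude $\lambda_0+\lambda_1=0$ because $\delta_X\subset X^{k-1}$ may itself be decomposable --- indeed, that possibility is exactly what Case~(b) of the theorem is about. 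The value of $\lambda_0$ is genuinely not determined by such arguments; the theorem is an alternative precisely because one cannot decide between $\lambda_0=-\lambda_1$ and $\lambda_0\neq-\lambda_1$.

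The second half of your plan also misses the mechanism. The paper does not rerun the recursion of Propositions~\ref{recursion} and~\ref{allgamma} at lower levels (the cycles $\gamma_{\underline a}$ for $r<k$ are not geometric $\Gamma$-type cycles and there is no reason their contributions cancel); instead, when $\lambda_0+\lambda_1\neq0$ one simply applies $\pr_{1,\dots,k-1*}$ to (\ref{almostdecomp'}): $\Gamma$ dies because it has relative dimension $1$ for this projection, $D_k$ pushes forward to $\delta_X\subset X^{k-1}$, and each $D_I$ becomes a $D_{I'}$ at the lower level, so one obtains $(\lambda_0+\lambda_1)\delta_X=(\text{Type }B+\text{Type }A)$ in $\CH_n(X^{k-1})_\Q$ and can divide by $\lambda_0+\lambda_1$. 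Iterating this projection, and normalizing the coefficient of $\sum_iD_i$ to $1$ at the minimal level by projecting once more, gives (\ref{case2}); termination at some $l\geq3$ is guaranteed because a decomposition at $l=2$ would be a Bloch--Srinivas decomposition of $\Delta_X$, impossible since $H^{n,0}(X)\neq0$. Your observation that $\Gamma=\emptyset$ for $d\geq 2n$ (equivalently $k>n$, so $F(X)=\emptyset$) is correct.
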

\begin{proof}
In (\ref{almostdecomp}) or (\ref{almostdecomp'}), if
$\lambda_0=-\lambda_1(=(-1)^{k-1}d!)$ which is non-zero in
particular, then we can divide on both sides by $\lambda_1$ to get
(\ref{case1}) in Case 1,  up to a rescaling of the numbers $\lambda_i$ and the polynomial $P$.

 If $\lambda_0+\lambda_1\neq 0$, then we project both
sides onto the first $k-1$ factors. Since $\Gamma$ has relative
dimension 1 for this projection, it vanishes after the projection.
Therefore we get an equality in $\CH_n(X^{k-1})_{\Q}$ of the form:
\begin{equation*}
0=(\lambda_0+\lambda_1)\delta_X+ \lambda_1'\sum_{i=1}^kD_i+
\lambda_2'\sum_{|I|=2}D_I+\cdots+\lambda'_{k-2}\sum_{|I|=k-2}D_I+
P'(h_1,\cdots,h_{k-1}).
\end{equation*}
Dividing both sides by $\lambda_0+\lambda_1$, which is non-zero, we
get a decomposition of the smallest diagonal in $\CH_n(X^l)_{\Q}$ for
$l=k-1$:
\begin{equation}\label{reducing}
\delta_X=\lambda_1\sum_{i=1}^lD_i+\sum_{j=2}^{l-2}\lambda_j\sum_{|I|=j}D_I+
P(h_1,\cdots,h_l).
\end{equation}
by insisting the old notation $\lambda_i$ and $P$.

If such a decomposition does not exist for $l=k-2$, then by a
further projection to the first $k-2$ factors of (\ref{reducing}),
we find $\lambda_1=1$ in (\ref{reducing}) for $l=k-1$. Hence we
obtain a decomposition (\ref{case2}) in Case 2 for $l=k-1$.

If there does exist such decomposition for $l=k-2$, \ie we have an
identity of the form (\ref{reducing}) for $l=k-2$. By projecting to
the first $k-3$ factors, if such decomposition as (\ref{reducing})
for $l=k-3$ does not exist, then we find $\lambda_1=1$ in
(\ref{reducing}) for $l=k-2$. Hence we obtain a decomposition
(\ref{case2}) in Case 2 for $l=k-2$. If there does exist such
decomposition for $l=k-3$, we continue doing the same argument.

Since $H^{n,0}(X)\neq 0$, as in the proof of Lemma \ref{coefflemma},
the non-existence of a decomposition of the diagonal $\Delta_X\subset
X\times X$ implies that the minimal $l$ for the existence of a
decomposition of the form (\ref{reducing}) is at least 3. Therefore
the above argument must stop at some $l\geq 3$, and gives the
decomposition (\ref{case2}) in Case 2 for this minimal $l$.

As for the vanishing of $\Gamma$, we note that $d\geq 2n$ \iff
$k>n$, in which case we know that $\Gamma$ is empty.
\end{proof}

Now we draw the following consequence on the ring structure of
$\CH^*(X)_{\Q}$ from the above decomposition theorem, generalizing
Corollary \ref{VCor}:
\begin{thm}\label{main3cor}
Let $X$ be a general smooth hypersurface in $\P^{n+1}$ of degree $d$ with
  $d\geq n+2$. Let $m=d-n\geq 2$. Then for any strictly positive
  integers $i_1, i_2,\cdots, i_m\in \N^*$ with $\sum_{j=1}^mi_j=n$, the image
  $$\im\left(\CH^{i_1}(X)_\Q\times\CH^{i_2}(X)_\Q\times\cdots\times\CH^{i_m}(X)_\Q\lra{\bullet} \CH_0(X)_\Q\right)=\Q\cdot h^{n}$$
\end{thm}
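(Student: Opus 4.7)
The plan is to apply Theorem~\ref{main3} as an equality of correspondences from $X^{l-1}$ to $X$, evaluated on an exterior product $\alpha:=W_1\times\cdots\times W_{l-1}$, where the $W_j$ are obtained by suitably grouping the $m=k-1$ cycles $Z_1,\ldots,Z_m$ so that each $W_j$ has strictly positive codimension and the codimensions still sum to $n$. This grouping is always possible since $l-1\leq k-1=m$, and by associativity of the intersection product $Z_1\bullet\cdots\bullet Z_m=W_1\bullet\cdots\bullet W_{l-1}$. The opposite inclusion $\Q\cdot h^n\subset\im(\bullet)$ is trivial (take $Z_j=h^{i_j}$), so only the forward inclusion requires proof.

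The strategy is to compute term-by-term the image of $\alpha$ under each correspondence appearing in the right-hand side of the decomposition of $\delta_X$. We have $\delta_{X*}(\alpha)=W_1\bullet\cdots\bullet W_{l-1}$, and $D_{l*}(\alpha)=\tfrac{1}{d}\deg(W_1\bullet\cdots\bullet W_{l-1})\cdot h^n$. For $D_i$ with $i<l$ and more generally for $D_I$ with $|I|\geq 2$, the index set $I$ meets $\{1,\ldots,l-1\}$, so the pull-back forces a factor $W_j\cdot c\in\CH^{n+\codim W_j}(X)_\Q=0$, and the term vanishes. A dimension count for $P(h_1,\ldots,h_l)_*(\alpha)$ shows that the only surviving monomial in $P$ is $h_1^{n-\codim W_1}\cdots h_{l-1}^{n-\codim W_{l-1}}h_l^n$, whose coefficient (times the relevant intersection numbers on $X$) contributes a rational multiple of $h^n$.

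The main obstacle is showing that in Case~1 of Theorem~\ref{main3} (where $l=k$) the contribution $\Gamma_*(\alpha)$ vanishes. Let $p:L\to F(X)$ denote the universal line and $q:L\to X$ the natural morphism, and set $\tilde\Gamma:=L\times_{F(X)}\cdots\times_{F(X)}L$ ($k$ factors); since $\tilde\Gamma\to\Gamma$ is birational,
\[\Gamma_*(\alpha)=(q_k)_*\bigl(\textstyle\prod_{j=1}^{k-1}q_j^*(W_j)\bigr),\]
where $q_j$ is the projection to the $j$-th factor followed by $q$. Factor $(q_k)_*=q_*\circ\Pi_*$ with $\Pi:\tilde\Gamma\to L$ the $k$-th projection, a $(\P^1)^{k-1}$-bundle. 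Using the projective bundle formula $q^*(W_j)=p^*(u_j)+p^*(v_j)\cdot\xi$ on $L$, with $u_j\in\CH^{i_j}(F(X))$, $v_j\in\CH^{i_j-1}(F(X))$ and $\xi=q^*(h)$, the projection formula for $\Pi_*$ singles out only the top monomial $\prod_{j=1}^{k-1}\xi_j$ (whose fiber integral equals $1$); the surviving term is $p^*\bigl(\prod_{j=1}^{k-1}v_j\bigr)$. Since $\prod v_j\in\CH^{n-k+1}(F(X))$ and $\dim F(X)=n-k$, this group vanishes, so $\Gamma_*(\alpha)=0$. Assembling all pieces, the decomposition applied to $\alpha$ yields $W_1\bullet\cdots\bullet W_{l-1}\in\Q\cdot h^n$, which is the desired conclusion.
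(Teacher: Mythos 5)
Your proposal is correct and follows the paper's strategy: apply the decomposition of $\delta_X$ from Theorem \ref{main3} as a correspondence and evaluate term by term on an exterior product, with only $\delta_X$, $D_{\{l\}}$ and one monomial of $P$ surviving. Two sub-steps are handled more explicitly (and, in the case of $\Gamma$, by a different route) than in the paper: for Case 2 the paper merely asserts that ``the same proof goes through,'' whereas you spell out the regrouping of the $m$ cycles into $l-1$ cycles of positive codimension; and for $\Gamma_*(\alpha)=0$ the paper uses a one-line dimension/support argument, while you compute via the $(\P^1)^{k-1}$-bundle $\Pi:\tilde\Gamma\to L$ and the projective bundle formula, reducing the vanishing to $\CH^{\,n-k+1}(F(X))=0$ since $\dim F(X)=n-k$ --- a more robust justification of the same fact.
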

\begin{proof}
  In our notation before, $m=k-1$. Let $z_j\in \CH^{i_j}(X)_{\Q}$ for $1\leq j\leq m$. By Theorem
  \ref{main3}, (\ref{case1}) or (\ref{case2}) holds. Suppose first
  that we are in Case 1, \ie (\ref{case1}). We view its both sides
  as correspondences from $X^m$ to $X$. Apply these
  correspondences to the algebraic cycle $z:=z_1\times\cdots\times z_m \in
  \CH^{n}(X^m)_{\Q}$:
  \begin{itemize}
    \item $\delta_{X*}(z)=z_1\bullet\cdots\bullet z_m$;
    \item $\Gamma_*(z)=0$ since $\Gamma_*(z)$ is represented by a
    linear combination of fundamental classes of certain
    subvarieties of dimension at least 1, but $\Gamma_*(z)$ is a
    zero-dimensional cycle, thus vanishes;
    \item $D_{I*}(z)= 0$ for any $I\neq \{k\}$;
    \item $D_{k*}(z)= \deg(z_1\bullet \cdots\bullet z_m)\cdot c_X$;
    \item $P(h_1, \cdots, h_{m+1})_*(z)$ is always proportional to
    $h^{n}$.
  \end{itemize}
  Therefore  $z_1\bullet \cdots\bullet z_m\in \Q\cdot
  h^{i_1+\cdots+i_m}$.\\
  If we are in Case 2, the same proof goes through.
\end{proof}

\begin{rmk}\label{rmkcor}\upshape
When $d=n+2$, this recovers the result of \cite{VoiFamilyK3} as in the first part of the paper. When $d>n+2$, the preceding theorem is actually predicted by the
Bloch-Beilinson conjecture, which roughly says that $\CH^i(X)_\Q$ is
controlled by the Hodge structures on the transcendantal parts of
$H^{2i}(X,\Q)$, $H^{2i-1}(X,\Q)$, $\cdots$, $H^i(X,\Q)$. However by
the Lefschetz hyperplane section theorem, the only non-Tate-type
Hodge structure of $H^*(X,\Q)$ is the middle cohomology $H^n(X,\Q)$.
Therefore, according to the Bloch-Beilinson conjecture, the smallest
$i$ such that $\CH^i(X)_{\Q}\supsetneqq \Q\cdot h^i$ is
$\lceil\frac{n}{2}\rceil$. Since in the corollary $\sum_{j=1}^mi_j=n$, the conjecture implies that there is at most one $j$ such that $z_j$ is not proportional to $h^{i_j}$. Now the above corollary follows from the easy fact that the intersection of any algebraic cycle $z$ with the hyperplane section class $h$ is always $\Q$-proportional to a power of $h$: write $\iota$ for the inclusion of the hypersurface $X$ into the projective space, then $z\bullet dh=\iota^*\iota_*(z)$, which is the pull-back of a cycle of the projective space, thus must be proportional to a power of $h$.
\end{rmk}


\footnotesize
\labelsep .5em\relax
{
\bibliographystyle{plain}
\bibliography{biblio_fulie}
}

\noindent\sc{D\'epartement de Math\'ematiques et Applications, \'Ecole Normale Sup\'erieure, 45 Rue d'Ulm, 75230 Paris Cedex 05, France}\\
{\itshape E-mail address}: {\tt lie.fu@ens.fr}\\
{\itshape URL}: {\tt http://www.math.ens.fr/\texttildelow lfu/}

\end{document}